\newtheorem{thm}{Theorem}[section]
\newtheorem{rmk}{Remark}[section]
\newtheorem{definition}{Definition}[section]
\newtheorem{lem}{Lemma}[section]
\newtheorem{prop}{Proposition}[section]
\DeclareMathOperator*{\tr}{tr}
\DeclareMathOperator*{\supp}{supp}
\newcommand{\R}{\mathbb{R}}
\newcommand{\bi}{{\char`\\ i}}
\newcommand{\dd}{\,\mathrm{d}}
\newcommand{\ds}{\dd s}
\newcommand{\dx}{\dd x}
\renewcommand{\tt}{\tilde{t}}
\DeclareMathOperator*{\argmin}{arg\,min}
\DeclareMathOperator{\Var}{Var}
\newcommand{\onehalf}{\frac{1}{2}}
\newcommand{\RS}{\R_\Sigma} 
\title{Expectation Propagation for Nonlinear Inverse Problems \\
-- with an Application to Electrical Impedance Tomography}
\date{\today}
\author{Matthias Gehre\thanks{Center for Industrial Mathematics, University of Bremen, Bremen D-28344, Germany (mgehre@math.uni-bremen.de)} \and Bangti Jin\thanks{Department of Mathematics, University of California, Riverside, University Ave. 900, Riverside, California 92521, USA (bangti.jin@gmail.com)}}
\begin{document}
\maketitle
\begin{abstract}
In this paper, we study a fast approximate inference method based on expectation propagation
for exploring the posterior probability distribution arising from the Bayesian formulation of
nonlinear inverse problems. It is capable of efficiently delivering reliable estimates of the posterior mean
and covariance, thereby providing an inverse solution together with quantified uncertainties.
Some theoretical properties of the iterative algorithm are discussed, and the efficient implementation
for an important class of problems of projection type is described. The method is illustrated
with one typical nonlinear inverse problem, electrical impedance tomography with complete electrode model, under sparsity
constraints. Numerical results for real experimental data are presented, and compared with
that by Markov chain Monte Carlo. The results indicate that the method is accurate and computationally very efficient.\\
\textbf{Keywords}: expectation propagation, nonlinear inverse problem, uncertainty quantification, sparsity constraints, electrical impedance tomography
\end{abstract}

\section{Introduction}

In this work, we study a statistical method, Expectation Propagation (EP), for inferring the mean and
covariance of posterior distributions arising in the Bayesian formulation of either linear or nonlinear
inverse problems. The EP method was first developed in the machine learning community \cite{Minka:2001,Minka:2001a},
to efficiently arrive at an approximation. In contrast to popular Markov chain Monte Carlo (MCMC) methods
\cite{GilksRichardsonSpiegelhalter:1996,Liu:2008}, the EP is far less expensive yet with
little compromise in the accuracy \cite{OpperWinther:2005}.

\subsection{Bayesian formulation}
We consider a (possibly nonlinear) equation
\begin{equation}\label{eqn:model}
   F(x)=b,
\end{equation}
where the map $F:\R^n\to\R^m$, and vectors $x\in\R^n$ and $b\in\R^m$ refer to
the data formation mechanism, the unknowns and the given data, respectively.
In the context of inverse problems related to differential equations, the map
$F$ often involves a discretized solution operator of the underlying governing differential equation. In
practice, we only have access to a noisy version  $b$ of the exact data $b^\dag$, i.e.,
$b=b^\dag+\zeta$, where the vector $\zeta\in\mathbb{R}^m$ represents the noise in the data.

The Bayesian approach has received increasing attention in recent years \cite{KaipioSomersalo:2005}
due to its distinct features compared with conventional deterministic inversion techniques. In this work,
we are interested in the fast exploration of the resulting Bayesian posterior distribution. we first briefly recall
the fundamentals of the Bayesian approach. There are two basic building blocks of Bayesian modeling,
i.e., the likelihood function and the prior distribution. The likelihood
function $p(b|x)$ depends on the statistics of the noise $\zeta$. The most popular choice in practice is
the Gaussian model, i.e., $\zeta$ follows a normal distribution with mean zero and
variance $\sigma^2I_m$, or $p(\zeta) = (\sqrt{2\pi\sigma^2})^{-m} e^{-\frac{1}{2\sigma^2}
\|\zeta\|_2^2}$, where $\|\cdot\|_r$, $r\geq1$, refers to the $r$-norm of
a vector. This gives a likelihood function $p(b|x)\propto e^{-\frac{1}{2\sigma^2}\|F(x)-b\|^2_2}$.
The Gaussian model is often justified by appealing to the central limit theorem. Another popular choice is the Laplace
distribution on the noise $\zeta$, i.e., $p(\zeta)=(2\sigma)^{-m}e^{-\frac{1}{\sigma}\|\zeta\|_1}$,
which gives a likelihood function $p(b|x)\propto e^{-\frac{1}{\sigma}\|F(x)-b\|_1}$.
The Laplace model is especially suited to data with a significant amount of outliers \cite{ClasonJin:2012}.

In Bayesian modeling, we encode our prior knowledge about the unknown $x$ in a prior
distribution $p(x)$, which forms the second building block of Bayesian modeling. The main role of the prior
$p(x)$ is to regularize the ill-posedness inherent to the model \eqref{eqn:model} so as to
achieve a physically meaningful solution \cite{Franklin:1970}. Hence the effect of the prior $p(x)$ is pronounced
for ill-posed and under-determined problems; its influence will vanish for a well-posed,
invertible $F$ as the noise-level tends to zero. There are many possible choices of the
prior distribution $p(x)$, depending on the desired qualitative features of the sought-for solution:
globally smooth, sparse (with respect to a certain basis), blocky and monotone etc. Typically such prior
knowledge is encoded by a Markov random field. One popular choice is the
smoothness prior, i.e., $p(x) = N(x;\mu_0,C_0)$, with mean $\mu_0$ and covariance $C_0$.

By Bayes' formula, we obtain the posterior distribution
\[
   p(x | b) \propto p(x|b) p(x)
\]
up to a constant factor depending only on the data $b$. The posterior distribution $p(x|b)$
holds the full information about the probability of any vector $x$ explaining the observational data $b$.
A very popular and straightforward approach is to search for the maximizing element $x$ of the
posterior distribution $p(x|b)$, which leads to the celebrated Tikhonov regularization.
However, it only yields information about one point estimate from an ensemble
of many equally plausible explanations, the posterior distribution $p(x|b)$, and further, it completely
ignores the uncertainties associated with a particular solution.

\subsection{Bayesian computations}

Alternatively, one can compute the (conditional) mean of the unknown $x$ given the data $b$
to obtain ``averaged'' information about the unknown $x$, which is representative of the
posterior distribution $p(x|b)$. Further, one can infer the associated uncertainty information
respectively credible intervals by calculating the (conditional) variance of the unknown $x$.
Numerically, the computation of the mean $\mu^*$ and covariance $C^*$ entails the following
high-dimensional integrals
\begin{align*}
\mu^* = \int p(x | b) x dx\quad \mbox{and}\quad
C^* = \int p(x | b) (x-\mu^*)(x-\mu^*)^t dx,
\end{align*}
which are intractable by means of classical numerical quadrature rules, except for the case of very
low-dimensional problems, e.g., $n=1,2$. Instead, one can either resort to sampling through
Markov Chain Monte Carlo (MCMC) methods \cite{GilksRichardsonSpiegelhalter:1996,Liu:2008},
or (deterministic) approximate inference methods, e.g., variational approximations
\cite{Beal:2003,Jin:2012}.

The MCMC is currently the most versatile and popular method for exploring the posterior state.
It constructs a Markov chain with the posterior distribution $p(x|b)$ as its stationary distribution,
and draws samples from the posteriori by running the Markov chain, from which the sample mean and
sample variance can be readily computed. We illustrate the computational challenge on the classical
Metropolis-Hastings algorithm \cite{GilksRichardsonSpiegelhalter:1996}, cf. Algorithm \ref{alg:mcmc}.
The algorithm generates a set of $N$ dependent samples, and we use the sample mean and sample covariance
to approximate $\mu^*$ and $C^*$. In order to obtain accurate estimates, a fairly large
number of samples (after discarding the initial samples) are required, e.g., $N=1\times10^5\sim 1\times 10^6$.
At each iteration of the algorithm, one needs to evaluate the acceptance probability $\rho(x^{(k)},
x^{(*)})$, in order to determine whether the proposal $x^{(*)}$ should be accepted. In turn, this
essentially boils down to evaluating the likelihood $p(b|x^{(*)})$ (and the prior $p(x^{(*)})$, which
is generally much cheaper), and thus invokes the forward simulation $F(x^{(*)})$. In the context of
inverse problems for differential equations, it amounts to solving one partial differential equation (system) for
the proposed parameter vector $x^{(*)}$. However, for many practical applications, e.g., underground
flow, wave propagation, geophysics, chemical kinetics and nonlinear tomography, each forward simulation
is already expensive, and thus a straightforward application of the Metropolis-Hastings algorithm is prohibitively
expensive, if not impractical at all.

\begin{algorithm}[h!]
  \caption{Metropolis-Hastings algorithm for sampling from $p(x|b)$}\label{alg:mcmc}
  \begin{algorithmic}[1]
    \STATE Set $N$ and $x^{(0)}$, and select proposal distribution $q(x|x')$.
    \FOR {$k=0:N$}
       \STATE draw sample $x^{(*)}$ from $q(x|x^{(k)})$.
       \STATE compute the acceptance probability $\rho(x^{(k)},x^{(*)})=\min\left(1,\frac{p(x^{(*)}|b)q(x^{(k)}|x^{(*)})}{p(x^{(k)}|b)q(x^{(*)}|x^{(k)})}\right)$.
       \STATE generate $u$ from uniform distribution $U(0,1)$.
       \IF {$u\leq\rho(x^{(k)},x^{(*)})$}
          \STATE $x^{(k+1)}=x^{(*)}$.
       \ELSE
          \STATE $x^{(k+1)}=x^{(k)}$.
       \ENDIF
    \ENDFOR
  \end{algorithmic}
\end{algorithm}

Therefore, there have been many works on accelerating the Metropolis-Hastings algorithm.
A first idea is to use surrogates for the expensive forward simulator, which directly
reduces the cost of the sampling step. The surrogate can be constructed in many ways,
including reduced-order modeling \cite{Jin:2008} and polynomial chaos \cite{MarzoukNajm:2007} etc.
The key is to construct faithful surrogates, which is typically viable only in low-dimensional
parameter spaces. The second idea is to design problem-dependent proposal distributions.
In practice, simple proposal distributions are not adapted to the specific problem
structure, which is essentially true in case of inverse problems, where the Hessian
of the forward map is generally highly smoothing (with a large-dimensional numerical kernel). Accordingly,
the proposals are not well matched with the posteriori, and thus only few samples
can be accepted and a large part of the expensive computations (used in forward simulation) is wasted. To this end,
two effective strategies have been developed: One strategy relies on the idea of preconditioning,
which screens the proposal with inexpensive preconditioner, typically low-resolution
forward simulators or cheap surrogates \cite{HigdonLeeBi:2002,EfendievHouLuo:2006,ChristenFox:2005}.
The other strategy gleans optimization theory and exploits the Hessian of the forward
map for effective sample generation, e.g., Langevin methods \cite{StramerTweedie:1999}
and Hessian information \cite{GirolamiCalderhead:2011,FlathWilcoxAkcelikHill:2011,MartinWilcoxBursteddeGhattas:2012}.

Despite these impressive progresses, the application of these approaches to complex Bayesian
models, such as those arising in inverse problems for differential equations, remains challenging
and highly nontrivial, and there has been considerable interest in developing fast alternatives.
Approximate inference methods, e.g., variational methods \cite{JordanGhahramani:1999,Beal:2003}
and expectation propagation \cite{Minka:2001,Minka:2001a}, belong to this group. Approximate
inference methods often approximate the posterior distribution with computationally more trackable
distributions, in the hope of capturing the specific structure of the problem, and typically involve a sequence
of optimization/integration problems. The accuracy of these approximate methods depends crucially
on the adopted simplifying assumptions, but generally cannot be made
arbitrarily accurate. This is in stark contrast to the MCMC, which in theory can be made
arbitrarily accurate by running the Markov chain long enough. Nonetheless, often they
can offer a reasonable approximation within a small fractional of computing time needed
for the MCMC, and therefore have received increasing attention in practical applications.
Recently, a different optimization approach based on the idea of transporting the prior into
the posterior via polynomial maps was developed in \cite{MoselhyMarzouk}, where the
degree of polynomial restricts the accuracy of the approximation.

In this work, we study the EP method \cite{Minka:2001,Minka:2001a} for numerically
exploring the posterior distribution.
The EP method is of deterministic optimization nature. Algorithmically, it iteratively adjusts
the iterates within the Gaussian family by minimizing the Kullback-Leibler divergence and looping
over all the factors in likelihood/prior in a manner analogous to the classical Kaczmarz method
\cite{Kaczmarz:1937} for solving a linear system of equations.
The EP generally delivers an approximation with reasonable accuracy \cite{OpperWinther:2005}.

\subsection{Our contributions}

The goal of the present work is to study the EP method, and to demonstrate its significant
potentials for (nonlinear) inverse problems. We shall discuss the basic idea, theoretical
properties, and the efficient implementation for an important class of Bayesian formulations.

Our main contributions are as follows. Theoretically, we discuss well-definedness of the EP method
in the multi-dimensional case (Theorem \ref{thm:log_concave_well_defined}), which extends a known
result \cite{Seeger:2008}. We provide additional theoretical insight into the algorithm for some
problems with special structures.
Algorithmically, we propose a coupling of the EP method with an iterative linearization
strategy to handle nonlinear inverse problems. Further, numerical results for one
representative nonlinear inverse problem, electrical impedance tomography (EIT) with
real experimental data on a water tank, under sparsity constraints, will be presented.
To the best of our knowledge, this represents the first application of the EP method to a
PDE-constrained inverse problem.

The sparsity approach for EIT has recently been revisited in
\cite{GehreKluthLipponenJinSeppanenKaipioMaass:2012} in the framework of classical regularization,
and this work extends the deterministic model to a Bayesian setting. The extension provides
valuable insights into the sparsity imaging technique, e.g., the reliability of a particular
solution. The numerical results indicate that with proper implementation, the EP method can
deliver approximations with good accuracy to the posterior mean and covariance for EIT imaging, within
a small fraction of computational efforts required for the MCMC.

The rest of the paper is organized as follows. In Section 2, we describe the basic idea of the EP algorithm,
and discuss some theoretical properties. In Sections 3 and 4, we discuss the efficient implementation of a special
class of problems, and present numerical results for EIT imaging to illustrate its accuracy and efficiency.
In the appendices, we collect some auxiliary results and technical proofs.

\section{EP Theory}

In this part, we describe the expectation propagation (EP) algorithm for approximate Bayesian
inference of a generic probability distribution that can be factorized into a specific form.
We begin with the basic idea and algorithm, and then discuss some properties.

\subsection{Basic idea}
The goal of the EP algorithm is to construct a Gaussian approximation to a generic distribution $p(x)$.
It \cite{Minka:2001,Minka:2001a} approximates the distribution $p(x)$
by a Gaussian distribution $\tilde{p}(x)$. To this end, we first factorize the target distribution $p(x)$ into
\begin{equation*}
 p(x)=\prod_{i\in I} t_i(x).
\end{equation*}
where $I$ is a finite index set for the factors, which is often omitted below for notational simplicity.
The optimal factorization is dependent on the specific application.
Then a Gaussian approximation $\tilde{t}_i(x)$ to each factor $t_i(x)$
is sought after by minimizing the difference between the following two distributions
\[
  \prod_j \tt_j(x)\quad \mbox{ and }\quad t_i(x)\prod_{j\neq i} \tt_j(x)
\]
for every $i$. Following \cite{Minka:2001}, we measure the difference between two probability
distributions $p$ and $q$ by the Kullback-Leiber divergence $D_{KL}(p,q)$ (also known as
relative entropy and information gain etc. in the literature) \cite{KullbackLeibler:1951} defined by
\begin{equation*}
   D_{KL}(p,q) = \int p(x) \log \frac{p(x)}{q(x)} \dx .
\end{equation*}
In view of Jensen's inequality, the Kullback-Leibler divergence $D_{KL}(p,q)$
is always nonnegative, and it vanishes if and only if $p=q$ almost everywhere.
However it is not symmetric and does not satisfy the triangle inequality, and
thus it is not a metric.

Nonetheless, the divergence $D_{KL}(p,q)$ has been very popular in measuring
of the difference between two probability distributions $p$ and $q$. Specifically,
the divergence of $q$ from $p$, denoted $D_{KL}(p,q)$, is a measure of the
information lost when $q$ is used to approximate $p$, which underlies the
EP algorithm and many other approximations, e.g., variational method
\cite{Beal:2003,JordanGhahramani:1999} and
coarse graining \cite{ChaimovichShell:2011,BilionisZabaras:2013}.

Specifically, we update the factor approximation $\tilde{t}_i(x)$ by
\begin{equation}\label{eqn:kld_factor}
   \tilde{t}_i(x) = \argmin_{\tilde{q} \text{ Gaussian}} D_{KL}\left(t_i(x)\prod_{j\neq i} \tilde{t}_j(x),
   \tilde{q}(x)\prod_{j\neq i} \tt_j(x)\right).
\end{equation}
Intuitively, the goal of the update $\tilde{t}_i(x)$ is to ``learn'' the information contained
in the factor $t_i(x)$, and by looping over all factors, hopefully the essential characteristics
of the posterior $p(x)$ are captured. We observe that even though each update involves only a ``local''
problem, the influence is global due to the presence of the term $\prod_{j\neq i}\tt_j(x)$.

\subsection{Minimizing $D_{KL}$ and basic algorithm}
The central task in the EP algorithm is to compute the minimizers to the Kullback-Leibler
divergences, i.e., Gaussian factors $\tilde{t}_j(x)$.
To this end, we first consider the general problem of minimizing the Kullback-Leibler divergence
$D_{KL}(q,\tilde{q})$ between a non-gaussian distribution $q(x)$ and a Gaussian distribution
$\tilde{q}(x)=N(x;\mu,C)$ with respect to $\tilde{q}$, which is shown in the next result.
We note that the minimizer $\tilde{q}$ is completely determined by the mean $\mu$
and covariance $C$.
\begin{thm}\label{thm:kld}
Let $q$ be a probability distribution with a finite second moment, $\mu^*=E_q[x]$ and
$C^*=\mathrm{Var}_q[x]$. Then there exists a minimizer to
\begin{equation*}
  \min_{\tilde{q} \text{ Gaussian}} D_{KL}(q,\tilde{q})
\end{equation*}
over any compact set of Gaussians with a positive definite covariance. Further, if
$N(x;\mu^*,C^*)$ belongs to the interior of the compact set, then it is a local minimizer.
\end{thm}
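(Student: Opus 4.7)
The plan is to work in the parametrization $\tilde q(x)=N(x;\mu,C)$ by the mean $\mu\in\R^n$ and positive definite covariance $C$, and to exhibit $D_{KL}(q,\tilde q)$ as an explicit, smooth function of $(\mu,C)$ alone. Splitting
$$
D_{KL}(q,\tilde q) = \int q(x)\log q(x)\dx - \int q(x)\log\tilde q(x)\dx
$$
and substituting the Gaussian density, the assumption that $q$ has finite second moment lets me evaluate the integral of the quadratic form in $\log\tilde q$ in closed form, yielding
$$
D_{KL}(q,\tilde q) = c(q) + \tfrac{1}{2}\log\det C + \tfrac{1}{2}\tr(C^{-1}C^*) + \tfrac{1}{2}(\mu-\mu^*)^T C^{-1}(\mu-\mu^*),
$$
where $c(q)$ collects the $(-q\log q)$ integral and depends only on $q$. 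Notably, only the first two moments of $q$ enter, which is exactly where the finite-second-moment hypothesis is used. The main minor obstacle here is the routine but error-prone matrix algebra needed to verify that the cross term and the trace simplify as claimed.

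For the existence statement I would invoke Weierstrass. The right-hand side above is continuous in $(\mu,C)$ on the open cone of positive definite covariances, and on any compact subset of Gaussians with positive definite $C$ the smallest eigenvalue of $C$ is bounded uniformly away from zero, so $\log\det C$ and $C^{-1}$ remain bounded. Consequently $D_{KL}(q,\tilde q)$ is a continuous function on a compact set and therefore attains its infimum.

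For the local-minimality assertion my preferred route is the exponential-family moment-matching identity rather than a direct Hessian computation. Since $\log\tilde q(x)$ is affine in the sufficient statistics $(x,xx^T)$, one has $E_q[\log\tilde q]=E_{N(\mu^*,C^*)}[\log\tilde q]$, and rearranging gives
$$
D_{KL}(q,\tilde q) = D_{KL}\bigl(q,N(\mu^*,C^*)\bigr) + D_{KL}\bigl(N(\mu^*,C^*),\tilde q\bigr),
$$
where the first summand is independent of $\tilde q$ and the second is nonnegative, vanishing precisely when $\tilde q=N(\mu^*,C^*)$. This identifies $N(\mu^*,C^*)$ as the unique global minimizer over all Gaussians, hence a fortiori as a local minimizer whenever it lies in the interior of the compact subset under consideration. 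As a byproduct, this also explains the moment-matching nature of the EP updates exploited in the remainder of the paper.
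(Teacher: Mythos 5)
Your proposal is correct, and for the second assertion it takes a genuinely different route from the paper. The paper expands $D_{KL}(q,\tilde q)$ in the parameters $(\mu,P)$ with $P=C^{-1}$, computes $\nabla_\mu D_{KL}$ and $\nabla_P D_{KL}$, identifies $\mu=E_q[x]$, $C=\Var_q[x]$ as the unique stationary point, and then appeals to ``convexity of the divergence in the second argument'' together with interiority to conclude local minimality. You instead write the divergence in closed form and then use the exponential-family Pythagorean identity $D_{KL}(q,\tilde q)=D_{KL}(q,N(\mu^*,C^*))+D_{KL}(N(\mu^*,C^*),\tilde q)$, which is valid here precisely because $\log\tilde q$ is affine in the sufficient statistics $(x,xx^t)$ and $q$ and $N(\mu^*,C^*)$ share their first two moments. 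Your route buys something real: it shows $N(\mu^*,C^*)$ is the \emph{unique global} minimizer over all Gaussians with positive definite covariance, not merely a local one, and it sidesteps the delicate point in the paper's argument that joint convexity of $D_{KL}$ over probability measures does not transfer automatically to the non-convex parametric family of Gaussians (the honest alternative being a Hessian computation in $(\mu,P)$, which the paper does not carry out). The existence part is essentially the same Weierstrass argument in both treatments. Two small points you may wish to acknowledge: the identity $E_q[\log\tilde q]=E_{N(\mu^*,C^*)}[\log\tilde q]$ is exactly where the finite-second-moment hypothesis enters, so it deserves one explicit line; and if $\int q\log q=+\infty$ the divergence is identically $+\infty$ on the whole family, a degenerate case that both you and the paper silently exclude.
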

\begin{proof}
The existence follows directly from the compactness assumption, and the continuity of
Kullback-Leibler divergence in the second argument.
Let $\tilde{q}(x)=N(x;\mu,C)$. The divergence $D_{KL}(q,\tilde{q})$ can be expanded into
\begin{align*}
D_{KL}(q,\tilde{q}) &= \int q(x) \left(\log q(x)
	+ \tfrac{d}{2}\log2\pi - \tfrac{1}{2}\log|{C}^{-1}|
	+ \tfrac{1}{2} (\mu-x)^t C^{-1} (\mu-x) \right) \dx.
\end{align*}
We first compute the gradient $\nabla_{\mu} D_{KL}(q,\tilde{q})$
of the divergence $D_{KL}(q,\tilde{q})$
with respect to the mean $\mu$ of the Gaussian factor $\tilde{q}(x)$, i.e.,
\begin{equation*}
  \nabla_{\mu} D_{KL}(q,\tilde{q}) = \int q(x) {C}^{-1} (\mu-x) \dx,
\end{equation*}
and set it to zero to obtain the first condition $\mu=E_q[x]$.

Likewise, we compute the gradient $\nabla_{P} D_{KL}(q,\tilde{q})$ of $D_{KL}(q,\tilde{q})$ with
respect to the precision $P={C}^{-1}$
\begin{equation*}
  \nabla_{P} D_{KL}(q,\tilde{q})
   = \int q(x) \left( -\tfrac{1}{2}P^{-1} + \tfrac{1}{2} (\mu-x)(\mu-x)^t \right) \dx,
\end{equation*}
where we have used the following matrix identities \cite{Dwyer:1967}
\begin{equation*}
\frac{\partial}{\partial A} \log|A| = A^{-1},\quad x^tAx = \tr(Axx^t)\quad \mbox{ and }\quad\frac{\partial}{\partial A} \tr(AB) = B^t.
\end{equation*}
The gradient $\nabla_PD_{KL}(q,\tilde{q})$ vanishes if and only if the condition
$C = E_{q}[(x-\mu)(x-\mu)^t]$ holds. Together with the condition on the mean $\mu$,
this is equivalent to $C=\mathrm{Var}_q[x]$. Now the second assertion follows
from the convexity of the divergence in the second argument and the assumption that $N(x;\mu^*,C^*)$
is an interior point of the compact set.
\end{proof}

The method is called Expectation Propagation (EP) because it propagates
the expectations of $x$ and $xx^t$ under the true distribution $p$ to the Gaussian approximation
$\tilde{p}$. By Theorem \ref{thm:kld}, minimizing $D_{KL}(q,\tilde{q})$ involves computing (possibly
very high-dimensional) integrals. It is feasible only if the integrals can be reduced to
a low-dimensional case. One such case will be discussed in Section \ref{sec:ep_projection}.
Now we can state the basic EP algorithm;
see Algorithm \ref{alg:ep_simple}. The stopping criterion at Step 7 can be based on monitoring
the relative changes of the approximate Gaussian factors $\tilde{t}_i(x)$.

\begin{algorithm}[h!]
  \caption{Approximate the posterior $p(x)=\prod_{i} t_i(x)$ by $\prod_{i} \tilde{t}_i(x)$ with Gaussian factors $\{\tilde{t}_i\}$.}
  \label{alg:ep_simple}
    \begin{algorithmic}[1]
       \STATE Start with initial distributions $\{\tilde{t}_i\}$
       \WHILE{not converged}
       \FOR{$i\in I$}
       \STATE Calculate $\tilde{q}(x)$ to $t_i(x) \prod_{j\neq i} \tilde{t}_j(x)$
         by $\min\limits_{\mbox{gaussian } \tilde{q}}D_{KL}(t_i(x) \prod_{j\neq i} \tilde{t}_j(x),\tilde{q}(x))$
       \STATE Set $\tilde{t}_i(x) = \frac{\tilde{q}(x)}{\prod_{j\neq i} \tilde{t}_j(x)}$
       \ENDFOR
       \STATE Check the stopping criterion.
       \ENDWHILE
    \end{algorithmic}
\end{algorithm}

\begin{rmk}
The EP method estimates both the mean and covariance of the posterior distribution.
In some cases, the mean might be close to the maximum a posteriori estimate.
Since the latter can be efficiently computed via modern minimization algorithms,
then the EP algorithm should be adapted to estimate the covariance only.
We shall not delve into the adaptation here, and leave it to a future work.
\end{rmk}

\subsection{Basic properties of EP algorithm}

In general, the convergence of the EP algorithm remains fairly unclear, and
nonconvergence has been observed for a naive implementation of the algorithm,
especially for the case that the factors are not log-concave, like the $t$ likelihood/prior. In
this part, we collect some results on the convergence issue.

Our first result is concerned with the exact recovery for Gaussian factors within one step.
\begin{prop}\label{eptheory:ConvergenceForGaussian}
Let the factors $\{t_i\}$ be Gaussian. Then the EP algorithm converges
after updating each factor approximation once with the limit $\tilde{t}_i=t_i,\ i\in I$.
\end{prop}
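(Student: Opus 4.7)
The plan is to exploit the fact that the family of Gaussians (up to positive multiplicative constants) is closed under both products and ratios, so that every quantity appearing in the EP update remains Gaussian and the $D_{KL}$ subproblem admits a zero-divergence solution.

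First I would fix an arbitrary index $i \in I$ at the very first time it is visited in the loop, and observe that the ``target'' distribution
\[
   r_i(x) \;\propto\; t_i(x)\prod_{j\neq i} \tilde{t}_j(x)
\]
is a product of Gaussian factors (Gaussian by hypothesis on $t_i$ and by the standing assumption that the algorithm maintains Gaussian approximations $\tilde{t}_j$). After normalization, $r_i$ is therefore itself a Gaussian probability density $N(x;\mu_i,C_i)$ with a positive definite covariance. Since this Gaussian is an admissible candidate in the minimization
\[
   \min_{\tilde{q}\text{ Gaussian}} D_{KL}\bigl(r_i(x),\tilde{q}(x)\bigr),
\]
and $D_{KL}(r_i, r_i)=0$, it must be the minimizer by the nonnegativity of the Kullback--Leibler divergence. (One can equally well invoke Theorem~\ref{thm:kld}: the mean and covariance of a Gaussian $r_i$ coincide with its defining parameters, so the matching moment conditions force $\tilde{q}=r_i$.) Consequently, the EP update gives
\[
   \tilde{t}_i(x) \;=\; \frac{\tilde{q}(x)}{\prod_{j\neq i}\tilde{t}_j(x)} \;=\; \frac{r_i(x)}{\prod_{j\neq i}\tilde{t}_j(x)} \;\propto\; t_i(x),
\]
so the new factor approximation coincides with $t_i$ up to the multiplicative normalization, which is irrelevant for the subsequent steps.

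Next I would iterate this observation over the inner \texttt{for} loop. At every subsequent factor index, the product $\prod_{j\neq i}\tilde{t}_j$ is still a product of Gaussians (those already overwritten to equal $t_j$, together with the remaining initial Gaussian approximations), so the same argument applies verbatim. After one full pass through $I$ one therefore has $\tilde{t}_i = t_i$ for every $i \in I$.

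Finally I would verify that this configuration is a fixed point of the EP recursion: at any further visit of factor $i$, the target is $t_i \prod_{j\neq i} t_j = p$, its Gaussian KL-projection is $p$ itself, and the update returns $\tilde{t}_i = p/\prod_{j\neq i} t_j = t_i$. Hence the algorithm has converged. The only mild subtlety is bookkeeping the normalization constants, since the individual factors $t_i$ need not be probability densities; but this is harmless because the $D_{KL}$ projection depends only on the normalized shape of the target, and the ratio defining $\tilde{t}_i$ in the update rebuilds the correct unnormalized Gaussian.
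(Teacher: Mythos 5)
Your proposal is correct and follows essentially the same route as the paper's own proof: since $t_i\prod_{j\neq i}\tilde{t}_j$ is itself Gaussian, it is the exact minimizer of the Kullback--Leibler divergence (with value zero), forcing $\tilde{t}_i=t_i$, and one sweep recovers all factors. Your additional remarks on normalization constants and on verifying the fixed point are fine but not needed beyond what the paper records.
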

\begin{proof}
By the definition of the EP algorithm, the $i$th factor $\tilde{t}_i$ is found by
$\argmin_{\tilde{q}\ \mathrm{Gaussian}}D_{KL}\left(t_i\prod_{j\neq i}\tilde{t}_j,\tilde{q}\right)$.
Due to the Gaussian assumption on the factor $t_i$, the choice $\tilde{q}=t_i\prod_{j\neq i}
\tilde{t}_j$ is Gaussian and it minimizes the Kullback-Leibler divergence with a minimum
value zero, i.e., $\tilde{t}_i=t_i$. Clearly one sweep exactly recovers
all the factors.
\end{proof}

The next result shows the one-step convergence for the case of decoupled factors, i.e.,
$t_j(x)=t_j(x_j)$ with the partition $\{x_j\}$ of $x$ being pairwise disjoint. It is an analogue to
the Gauss-Seidel iteration method for a linear system of equations with a diagonal matrix.
We will provide a constructive proof, which relies on Theorem \ref{thm:proj} below, and
hence it is deferred to Appendix \ref{sec:app:proof}.
\begin{prop}\label{lem:convergence_blockdiagonal}
Let $q(x) = \prod t_i(x_i)$, and $\{x_i\}$ form a pairwise disjoint partition of the vector $x \in \R^n$.
If the factors $\tilde{t}_i(x)$ are initialized as $\tilde{t}_i(x)=\tilde{t}_i(x_i)$ 
then the EP algorithm converges after one iteration and the limit iterate $\{\tt_i\}$ satisfies
the moment matching conditions.
\end{prop}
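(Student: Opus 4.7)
The plan is to exploit the block structure to show that each EP update is in effect a low-dimensional, independent computation that leaves the other blocks untouched. First, I would observe that under the hypothesis $t_i(x)=t_i(x_i)$ together with the initialization $\tilde{t}_i(x)=\tilde{t}_i(x_i)$, the distribution appearing inside the KL projection at step $i$,
\[
  t_i(x_i)\prod_{j\neq i}\tilde{t}_j(x_j),
\]
factorizes across the partition, so its $x_i$- and $x_j$-components are independent. Consequently its mean and covariance are block-diagonal with respect to the partition: the $i$-block comes from $t_i(x_i)$, each $j$-block ($j\neq i$) from $\tilde{t}_j(x_j)$, and all cross-block second moments vanish.

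Next, I would invoke Theorem \ref{thm:kld} to identify the KL-optimal Gaussian $\tilde{q}$ as the Gaussian with exactly these moments; in particular $\tilde{q}$ has block-diagonal covariance and its marginals factor as a product of Gaussians over the partition. Dividing by $\prod_{j\neq i}\tilde{t}_j(x_j)$ then cancels the $j\neq i$ Gaussian factors exactly, so the updated $\tilde{t}_i$ is (up to a normalizing constant) itself a Gaussian depending only on $x_i$. Hence the block-wise structure is an invariant of the iteration: throughout the sweep, $\prod_k\tilde{t}_k$ remains a product of Gaussians on the disjoint blocks. Moreover, the update at step $i$ changes only the $i$-block of this product, since all other $\tilde{t}_j$ are untouched and depend only on $x_j$.

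Third, moment matching is then immediate: after the update at step $i$, the $x_i$-marginal of $\prod_k\tilde{t}_k$ equals the $x_i$-marginal of $t_i(x_i)\prod_{j\neq i}\tilde{t}_j(x_j)$, which is proportional to $t_i(x_i)$ and hence to the true $x_i$-marginal of $q$. Because subsequent steps of the sweep alter only other blocks, after a single sweep every block marginal of $\prod_k\tilde t_k$ matches that of $q$, and since both distributions are block-diagonal this delivers the full moment-matching condition. A second sweep reproduces the same block computations and therefore leaves each $\tilde t_i$ unchanged, confirming one-step convergence.

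The main subtlety I anticipate is the step where I claim that the Gaussian minimizer $\tilde{q}$ inherits block-diagonality from the factorized target; this follows from uniqueness of the moment-matched Gaussian given by Theorem \ref{thm:kld} together with the observation that independence across blocks forces all cross-block covariances to vanish. I expect the paper's constructive argument via Theorem \ref{thm:proj} to formalize exactly this reduction, recasting each block update as an independent low-dimensional projection so that the moment-matching bookkeeping is automatic.
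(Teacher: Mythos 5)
Your proof is correct and rests on the same key observation as the paper's: under the block-disjoint structure the cavity $\prod_{j\neq i}\tilde{t}_j(x_j)$ carries no information about $x_i$, so the update of $\tilde{t}_i$ is determined by $t_i$ alone and a single sweep is a fixed point. The paper reaches this conclusion by recasting the problem into the projection framework of Theorem \ref{thm:proj} and reading off from Algorithm \ref{alg:epproj} that the cavity precision on block $i$ vanishes, $\widehat{C}_i^{-1}=(U_iK^{-1}U_i^t)^{-1}-K_i=0$, whereas you argue distributionally via independence across blocks and Theorem \ref{thm:kld}; these are two presentations of the same argument. One small imprecision: where you say the $x_i$-marginal of $\prod_k\tilde{t}_k$ ``equals'' the $x_i$-marginal of the tilted distribution, you should say it matches its first two moments, since the former is Gaussian and the latter is proportional to the generally non-Gaussian $t_i(x_i)$; this is exactly the moment-matching condition claimed, so the conclusion stands.
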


A slightly more general case than Proposition \ref{lem:convergence_blockdiagonal} is the case of
a linear system with a ``triangular-shaped'' matrix. Then one naturally would conjecture that
the convergence is reached within a finite number of iterations. However, it is still unclear
whether this is indeed the case.

The next result shows the well-definedness of the EP algorithm for log-concave factors.
The proof requires several technical lemmas in Appendix \ref{sec:app:lemmas}. The
result is essentially a restatement of \cite[Theorem 1]{Seeger:2008}. We note that the
proof in \cite{Seeger:2008} is for the one-dimensional case, whereas the proof
below is multi-dimensional. Further, we specify more clearly the conditions.
\begin{thm}\label{thm:log_concave_well_defined}
Let the factors $\{t_i\}$ in the posterior $\prod t_i(x)$ be log-concave, uniformly bounded
and have support of positive measure. If at iteration $k$, the factor covariances $\{C_i\}$
of the approximations $\{\tt_i\}$ are positive definite, then the EP updates at $k+1$ step
are positive semidefinite.
\end{thm}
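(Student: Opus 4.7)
The plan is to track how the precision of $\tilde{t}_i$ transforms under one EP sweep and reduce positive semidefiniteness of the new precision to a covariance comparison for log-concave perturbations of Gaussians. Write the cavity distribution $\tilde{q}^{\setminus i}(x)=\prod_{j\neq i}\tilde{t}_j(x)$; since each $\tilde{t}_j$ is Gaussian with positive definite covariance $C_j$, the cavity is proportional to a Gaussian with precision $P^{\setminus i}=\sum_{j\neq i}C_j^{-1}$, which is symmetric positive definite. The EP step produces $\tilde{t}_i^{\mathrm{new}}\propto \tilde{q}^{\mathrm{new}}/\tilde{q}^{\setminus i}$, where $\tilde{q}^{\mathrm{new}}=N(x;\mu^*,C^*)$ matches the first two moments of the tilted density $q(x)\propto t_i(x)\tilde{q}^{\setminus i}(x)$, as ensured by Theorem \ref{thm:kld}. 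Reading off the exponent of the resulting Gaussian factor gives the new precision $P_i^{\mathrm{new}}=(C^*)^{-1}-P^{\setminus i}$, so the claim reduces to showing $(C^*)^{-1}\succeq P^{\setminus i}$, equivalently $C^*=\mathrm{Var}_q[x]\preceq (P^{\setminus i})^{-1}$ in the Loewner order.

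Phrased this way, the assertion is that multiplying the Gaussian cavity by the log-concave factor $t_i$ cannot inflate the covariance. The natural tool is the Brascamp--Lieb variance inequality: for a log-concave density $q\propto e^{-V}$ with $V$ convex, $\mathrm{Var}_q[x]\preceq E_q[(\nabla^2 V)^{-1}]$. In our setting $V=-\log t_i-\log \tilde{q}^{\setminus i}$; log-concavity of $t_i$ gives $\nabla^2(-\log t_i)\succeq 0$, while $\nabla^2(-\log \tilde{q}^{\setminus i})=P^{\setminus i}$ is constant. Hence $\nabla^2 V\succeq P^{\setminus i}$ pointwise, and operator monotonicity of inversion on the positive-definite cone yields $(\nabla^2 V)^{-1}\preceq (P^{\setminus i})^{-1}$. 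Taking expectations under $q$ and invoking Brascamp--Lieb produces $C^*\preceq (P^{\setminus i})^{-1}$, whence $P_i^{\mathrm{new}}\succeq 0$.

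Two points require care and are where the auxiliary lemmas in Appendix \ref{sec:app:lemmas} should come into play. First, one must verify that $q$ is integrable with a finite second moment so that Theorem \ref{thm:kld} applies and $C^*$ is well defined; this is precisely where the hypotheses that each $t_i$ is uniformly bounded with support of positive measure are used, combined with the positive definiteness of $P^{\setminus i}$, which controls the Gaussian envelope and gives the tail decay of $q$. Second, Brascamp--Lieb is classically stated for smooth strongly convex $V$, whereas a generic log-concave $t_i$ only supplies $\nabla^2(-\log t_i)\succeq 0$ in a weak sense; this can be handled by mollification, replacing $t_i$ by $t_i \star N(0,\varepsilon I)$, which is smooth and strongly log-concave because the cavity already contributes $P^{\setminus i}\succ 0$, and then letting $\varepsilon\downarrow 0$ with dominated convergence of the mean and covariance. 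I expect the regularity bookkeeping in these two steps to be the main technical obstacle, whereas the PSD chain itself is short and essentially mechanical.
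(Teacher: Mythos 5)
Your reduction is correct and matches the paper's: the new factor precision is $(C^*)^{-1}-C_{\setminus i}^{-1}$, so everything hinges on the covariance comparison $C^*\preceq C_{\setminus i}$. Where you diverge is in how you prove that comparison. The paper shows that the partition function $Z(\mu_{\setminus i})=\int N(x;\mu_{\setminus i},C_{\setminus i})t_i(x)\dx$ is log-concave in the cavity mean (product of log-concave functions, Lemma \ref{lem:prod_logconcave}, then marginalization via Pr\'ekopa--Leindler, Lemma \ref{lem:marg_logconcave}), and combines this with the exponential-family identity $C^*=C_{\setminus i}\bigl(\nabla^2_{\mu_{\setminus i}}\log Z\bigr)C_{\setminus i}+C_{\setminus i}$ of Lemma \ref{cor:gaussian:tilted_moments} to read off $C^*\preceq C_{\setminus i}$. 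You instead invoke the Brascamp--Lieb variance inequality on the tilted density directly. Both arguments live in the same circle of ideas (the paper even cites Brascamp--Lieb for its marginalization lemma), but the paper's route has a practical advantage: all derivatives are taken with respect to the cavity mean, against which $Z$ is automatically smooth, so no regularity is ever demanded of $t_i$ itself. Your route requires $\nabla^2(-\log t_i)$ pointwise, which forces the mollification-and-limit detour you flag; that detour is workable (the uniform bound on $t_i$ and the Gaussian cavity give the needed domination) but it is exactly the bookkeeping the paper's formulation avoids. One correction to your accounting of the hypotheses: the support-of-positive-measure assumption is not primarily about integrability of $q$ (boundedness of $t_i$ plus the Gaussian cavity already gives all moments); it is what guarantees, via Lemma \ref{lem:posdefvariance}, that $C^*$ is strictly positive definite, without which $(C^*)^{-1}$ does not exist and the update $(C^*)^{-1}-C_{\setminus i}^{-1}$ is not even defined. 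Your Brascamp--Lieb bound only controls $C^*$ from above, so you still need this separate lower-bound step.
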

\begin{proof}
By the positive definiteness of the factor
covariances $\{C_i\}$, the covariance $C_\bi$ of the cavity distribution $\tilde{t}_\bi =
\prod_{j\neq i} \tilde{t}_j$ is positive definite. Next we show that the partition function
$Z= \int N(x;\mu_\bi,C_\bi) t_i(x) \dx$ is log-concave in $\mu_\bi$. A straightforward computation
yields that $N(x;\mu_\bi,C_\bi)$ is log-concave in $(x,\mu_\bi)$. By Lemma \ref{lem:prod_logconcave}
and log-concavity of $t_i(x)$, $N(x;\mu_\bi,C_\bi)t_i(x)$ is log-concave in $(x,\mu_\bi)$,
and thus Lemma \ref{lem:marg_logconcave} implies that $Z$ is
log-concave in $\mu_\bi$, and $\nabla_{\mu_\bi}^2 \log Z$ is negative semi-definite.

Now by Lemma \ref{cor:gaussian:tilted_moments}, the covariance $C$ of $Z^{-1}N(x;\mu_\bi,C_\bi)t_i(x)$ is given by
\begin{equation*}
  \begin{aligned}
   C &= C_\bi \left(\nabla_{\mu_\bi}^2 \log Z\right)C_\bi + C_\bi. 
  \end{aligned}
\end{equation*}
Meanwhile, by the boundedness of factors $t_i$, the covariance $Z^{-1}\int N(x;\mu_\bi,C_\bi)t_i(x) (x-\mu^*)
(x-\mu^*)^t \dx$ (with $\mu^*=Z^{-1}\int N(x;\mu_\bi,C_\bi)t_i(x)x\dx$) exists, and further by Lemma
\ref{lem:posdefvariance} and the assumption on $t_i$, it is positive definite. Hence,
$C$ is positive definite.

Since the Hessian $\nabla_{\mu_\bi}^2\log Z$ is negative semi-definite, $C-C_\bi=C_\bi
\left(\nabla_{\mu_\bi}^2 \log Z\right)C_\bi$ is also negative semi-definite, i.e., for
any vector $w$, $w^t C w \leq w^t C_\bi w$. Now we let $\tilde{w}= C^{\frac12} w$. Then
$\|\tilde{w}\|^2 \leq \tilde{w}^tC^{-\frac12} C_\bi C^{-\frac12} \tilde{w}$ for any
vector $\tilde{w}$. By the minmax
characterization of eigenvalues of a Hermitian matrix, $\lambda_\mathrm{min}(C^{-\frac12}
C_\bi C^{-\frac12})\geq1$. Consequently, $\lambda_\mathrm{max}(C^\frac12 C_\bi^{-1}
C^\frac{1}{2})\leq 1$, and equivalently $\|\tilde w\|^2\geq\tilde{w}^tC^\frac12 C_\bi^{-1}
C^\frac{1}{2}\tilde{w}$ for any vector $\tilde{w}$. With the substitution $w=C^{\frac{1}{2}}
\tilde{w}$, we get $w^tC^{-1}w \geq w^t C_\bi^{-1} w$ for any vector $w$, i.e., $C^{-1}-
C_\bi^{-1}$ is positive semidefinite. The conclusion follows from this and the fact that
the inverse covariance of the factor approximation $\tt_i$ is given by $C^{-1}-C_\bi^{-1}$.
\end{proof}

\begin{rmk}
Theorem \ref{thm:log_concave_well_defined} only ensures the well-definedness of the EP algorithm
for one iteration. One might expect that in case of strictly log-concave factors, the
well-definedness holds for all iterations. However, this would require a strengthened version of
the Pr\'ekopa-Leindler inequality, i.g., preservation of strict log-concavity under marginalization.
\end{rmk}
\section{EP with projections}\label{sec:ep_projection}

Now we develop an applicable EP scheme by assuming that, apart from the Gaussian base $t_0(x)$,
each factor involves a probability density function of the form $t_i(U_ix)$, i.e.,
\begin{equation}\label{eq:epwithproj}
t_0(x)\prod_i t_i(U_ix),
\end{equation}
where the matrices $U_i$ have full rank, and can represent either the projection onto several
variables or linear combinations of the variables. The specific form of $U_i$ depends on
concrete applications. It occurs e.g., in
robust formulation with the Laplace likelihood \cite{Gao:2008} and total variation prior.
It represents an important structural property,
and should be utilized to design efficient algorithms. The goal of this part is to derive
necessary modifications of the basic EP algorithm and to spell out all the implementation details,
following the very interesting works \cite{vanGerven:2010}.

Each factor $t_i(U_ix)$ effectively depends on $x$ through $U_ix$, albeit it lives on a high-dimensional
space (that of $x$). This enables us to derive concise formulas for the mean and the covariance
under the product density of one nongaussian factor $t_i(U_ix)$ and a Gaussian; see
the following result. The proof follows from a standard but tedious change of variable, and hence
it is deferred to Appendix \ref{sec:app:proof:thmproj}.
\begin{thm}\label{thm:proj}
Let $\mu\in\mathbb{R}^n$, $C\in\mathbb{R}^{n\times n}$ be symmetric positive definite and
$U\in \R^{l\times n}\ (l\le n)$ be of full rank, $Z=\int t(Ux)N(x;\mu,C)\dx$ be the normalizing
constant. Then the mean $\mu^\ast=E_{Z^{-1}t(Ux) N(x;\mu,C)}[x]$
and covariance $C^\ast=\Var_{Z^{-1}t(Ux)N(x;\mu,C)}[x]$ are given by
\begin{equation*}
   \begin{aligned}
     \mu^\ast &= \mu+ CU^t(UCU^t)^{-1}(\overline{s}-U\mu),\\
     C^\ast &= C + CU^t(UCU^t)^{-1}(\overline{C}-UCU^t)(UCU^t)^{-1}UC,
   \end{aligned}
\end{equation*}
where $\overline{s}\in\mathbb{R}^l$ and $\overline{C}\in\mathbb{R}^{l\times l}$ are respectively given by
\begin{equation*}
   \overline{s}=E_{Z^{-1}t(s) N(s;U\mu,UCU^t)}[s] \quad\mbox{and}\quad \overline{C}=\Var_{Z^{-1}t(s)N(s;U\mu,UCU^t)}[s].
\end{equation*}
\end{thm}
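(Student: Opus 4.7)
The plan is to reduce the $n$-dimensional integrals defining $\mu^\ast$ and $C^\ast$ to the $l$-dimensional integrals defining $\bar s$ and $\bar C$ by decomposing the tilted density along the direction $Ux$, on which the non-Gaussian factor depends, and its orthogonal complement. Rather than executing the suggested direct change of variables (which would require fixing an explicit matrix complementary to $U$ and tracking Jacobians), I would use the tower property of conditional expectation and the law of total variance; this makes the role of $\bar s$ and $\bar C$ essentially automatic.

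First, I would view $N(x;\mu,C)\,\mathrm{d}x$ as the law of a Gaussian vector $X$ and set $S=UX$. Standard properties of multivariate Gaussians then give $S\sim N(U\mu,UCU^t)$, which is non-degenerate because $U$ has full row rank and $C$ is positive definite, and the conditional law $X\mid S=s$ is Gaussian with mean $m(s)=\mu+CU^t(UCU^t)^{-1}(s-U\mu)$ and covariance $K=C-CU^t(UCU^t)^{-1}UC$, the latter independent of $s$. These are the classical regression formulas for partitioned Gaussians applied to the joint law of $(X,S)$.

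Second, I would transfer this decomposition to the tilted measure $Z^{-1}t(Ux)N(x;\mu,C)\,\mathrm{d}x$. Since $t(Ux)=t(s)$ depends on $x$ only through $s$, multiplying by $t(Ux)/Z$ reweights the marginal of $S$ to $Z^{-1}t(s)N(s;U\mu,UCU^t)$ while leaving the conditional law $X\mid S=s$ untouched. By the definitions of $\bar s$ and $\bar C$, under the tilted measure $S$ has mean $\bar s$ and covariance $\bar C$, and $X\mid S=s$ remains Gaussian with mean $m(s)$ and covariance $K$. Applying the laws of total expectation and total variance yields $\mu^\ast=E[m(S)]=\mu+CU^t(UCU^t)^{-1}(\bar s-U\mu)$, and $C^\ast=E[K]+\Var(m(S))=K+CU^t(UCU^t)^{-1}\bar C(UCU^t)^{-1}UC$, which upon substituting the definition of $K$ gives the stated formula.

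The main obstacle is purely bookkeeping rather than anything conceptual. One must justify the tilting step at the level of regular conditional probabilities (which is automatic because $t\ge 0$ is measurable and $Z<\infty$ under the hypotheses in use), and then verify the algebraic rearrangement that combines the term $-CU^t(UCU^t)^{-1}UC$ coming from $K$ with the term $CU^t(UCU^t)^{-1}\bar C(UCU^t)^{-1}UC$ coming from $\Var(m(S))$ into the symmetric ``sandwich'' $CU^t(UCU^t)^{-1}(\bar C-UCU^t)(UCU^t)^{-1}UC$. This last step is an elementary rewriting using $(UCU^t)(UCU^t)^{-1}=I_l$ on one of the summands.
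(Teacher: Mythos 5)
Your proposal is correct, and it reaches the result by a genuinely different route from the paper. The paper fixes an explicit completion $T=\left[\begin{smallmatrix}U\\ R\end{smallmatrix}\right]$ of $U$, factors $N(Tx;T\mu,TCT^t)$ into the marginal $N(s;U\mu,UCU^t)$ times the conditional of $r=Rx$ given $s$ (via the Schur complement), and then computes $T\mu^\ast$ and $TC^\ast T^t$ as block integrals and solves the resulting matrix equations, verifying the covariance identity termwise. You instead condition the \emph{full} vector $X$ on $S=UX$ (so your $m(s)=\mu+CU^t(UCU^t)^{-1}(s-U\mu)$ and $K=C-CU^t(UCU^t)^{-1}UC$ are the regression of $X$ on $S$, not of $RX$ on $S$), observe that tilting by $t(S)/Z$ only reweights the marginal of $S$ because the density ratio is a function of $S$ alone, and then read off $\mu^\ast$ and $C^\ast$ from the tower property and the law of total variance. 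This buys you three things: no auxiliary matrix $R$, no Jacobian or $\det T$ bookkeeping (the paper needs Lemma \ref{lem:proj_z} for exactly that), and no back-solving of a block system --- the target formulas appear directly, with only the trivial rewriting $CU^t(UCU^t)^{-1}UC=CU^t(UCU^t)^{-1}(UCU^t)(UCU^t)^{-1}UC$ needed at the end. The one point worth stating carefully in a written version is that the joint law of $(X,S)$ is degenerate, so "conditional of $X$ given $S=s$" should be justified either by the decomposition $X=m(S)+(X-m(S))$ with $X-m(S)$ Gaussian, mean zero, covariance $K$, and uncorrelated with (hence independent of) $S$, or by checking the disintegration identity $E[g(X)t(UX)]=E[t(S)\,E[g(X)\mid S]]$ directly; with $U$ of full row rank and $C$ positive definite, $UCU^t$ is invertible and both arguments go through. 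The finiteness of the second moments of the tilted measure is implicitly assumed in the theorem statement, in your argument as in the paper's.
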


Now we can present the EP algorithm for the approximate inference of the posterior distribution
\eqref{eq:epwithproj}; see Algorithm \ref{alg:epproj}. We shall skip the update of the Gaussian
factor $t_0(x)$. Here we have adopted the following canonical representation of the Gaussian
approximation $\tilde{t}_i(U_ix)$ to $t_i(U_ix)$, i.e.,
\begin{equation}\label{eqn:ttilde}
  \tilde{t}_i(U_ix) \propto \exp\left((U_ix)^th_i-\frac{1}{2}(U_ix)^tK_i(U_ix)\right),
\end{equation}
where $h_i$ and $K_i$ are low-rank parameters to be updated. With the representation \eqref{eqn:ttilde}, there holds
\begin{equation*}
 \begin{aligned}
   \tilde{t}_i(x)& = N(x;\mu_i,C_i) \propto e^{-\frac{1}{2}(x-\mu_i)^tC_i^{-1}(x-\mu_i)}\\
            &\propto e^{-\frac{1}{2}x^tC_i^{-1}x+x^tC_i^{-1}\mu_i}=: G(x,h_i,K_i),
\end{aligned}
\end{equation*}
with $h_i=C_i^{-1}\mu_i$ and $K_i=C_i^{-1}$. Then the approximate distribution $\prod_i\tilde{t}_i(U_ix)$
is a Gaussian distribution with parameters
\begin{equation*}
    K = K_0 + \sum_iU_i^tK_iU_i\quad \mbox{and}\quad
    h = h_0 + \sum_iU_i^th_i,
\end{equation*}
where $(K_0,h_0)$ is the parameter tuple of the Gaussian base $t_0$.

\begin{algorithm}[h]
   \caption{Serial EP algorithm with projection.}\label{alg:epproj}
   \begin{algorithmic}[1]
      \STATE Initialize with $K_0=C_0^{-1}$, $h_0=C_0^{-1}\mu_0$, $K_i = I$ and $h_i = 0$ for $i\in I$
      \STATE $K = K_0 + \sum_i U_i^tK_iU_i$
      \STATE $h = h_0 + \sum_i U_i^th_i$
      \WHILE{not converged}
        \FOR{$i\in I$}
        \STATE $\widehat{C}_i^{-1} = (U_iK^{-1}U_i^t)^{-1}-K_i$
        \STATE $\widehat{\mu}_i 
          = (I-U_iK^{-1}U_i^tK_i)^{-1}(U_iK^{-1}h-U_iK^{-1}U_i^th_i)$
        \STATE $K_i = \mathrm{Var}_{Z^{-1}t_i(s_i)N(s_i;\widehat{\mu}_i,\widehat{C}_i)}[s_i]^{-1} - \widehat{C}_i^{-1}$, with $Z=\int t_i(s_i)N(s_i;\widehat{\mu}_i,\widehat{C}_i)\ds_i$
        \STATE $h_i = \mathrm{Var}_{Z^{-1}t_i(s_i)N(s_i;\widehat{\mu}_i,\widehat{C}_i)}[s_i]^{-1}E_{Z^{-1}t_i(s_i)N(s_i;\widehat{\mu}_i,\widehat{C}_i)}[s_i] - \widehat{C}_i^{-1}\widehat{\mu}_i$
        \STATE $K = K_0 + \sum_i U_i^tK_iU_i$
        \STATE $h = h_0 + \sum_i U_i^th_i$
        \ENDFOR
        \STATE Check the stopping criterion.
      \ENDWHILE
      \STATE Output the covariance $C=K^{-1}$ and the mean $\mu=K^{-1}h$.
   \end{algorithmic}
\end{algorithm}

The inner loop in Algorithm \ref{alg:epproj} spells out the minimization step in Algorithm
\ref{alg:ep_simple} using the representation \eqref{eqn:ttilde} and Theorem \ref{thm:proj}.
In brevity, Steps 6--7 form the (cavity) distribution $\tilde{p}^\bi=t_0\prod_{j\neq i}
\tilde{t}_j(U_jx)$, i.e., mean $\mu_\bi$ and covariance $C_\bi$, and the low-rank representation
$\widehat{C}_i=U_iC_\bi U_i^t$ and $\widehat{\mu}_i=U_i\mu_\bi$; Steps 8--9 update the
parameters pair $(h_i,K_i)$ for the $i$th gaussian factor $\tilde{t}_i(U_ix)$.
Steps 10--11 then update the global approximation $(h,K)$ by the new values for the $i$th
factor. There is a parallel variant of the EP algorithm, where steps 10--11 are moved
behind the inner loop. The parallel version can be much faster on modern multi-core
architectures, since it scales linearly in the number of processors, but it is less robust
than the serial variant. Now we derive the main steps of Algorithm \ref{alg:epproj}.

\begin{enumerate}[(i)]
  \item Step 6 forms the covariance of the cavity distribution $\tilde{p}^\bi$: From the elementary relation
      \begin{equation}\label{eqn:cavity}
          G(x,h,K)/G(x,U_i^th_i,U_i^tK_iU_i) = G(x,h-U_i^th_i,K-U_i^tK_iU_i),
      \end{equation}
      we directly get the cavity covariance $C_\bi=(K-U_i^tK_iU_i)^{-1}$. Consequently, by
      Woodbury's formula \cite{Woodbury:1950}
      \begin{equation}\label{eqn:woodbury}
         (A+WCV)^{-1}=A^{-1}-A^{-1}W(C^{-1}+VA^{-1}W)^{-1}VA^{-1},
      \end{equation}
      and letting $\widehat{K}_i=U_iK^{-1}U_i^t$, we arrive at a direct formula for computing
      \begin{equation*}
        \begin{aligned}
           \widehat{C}_i &=U_i^t C_\bi U_i = U_i^t(K-U_i^tK_iU_i)^{-1}U_i^t\\
             & = U_i\left[K^{-1}-K^{-1}U_i^t(-K_i^{-1}+\widehat{K}_i)^{-1}U_iK^{-1}\right]U_i^t\\
             & = \left[I+\widehat{K}_iK_i(I-\widehat{K}_iK_i)^{-1}\right]\widehat{K}_i = (I-\widehat{K}_iK_i)^{-1}\widehat{K}_i.
        \end{aligned}
      \end{equation*}
      Now inverting the matrix gives the
      cavity covariance $\widehat{C}_i^{-1}=\widehat{K}_i^{-1}-K_i$.
   \item  Step 7 forms the cavity mean $\widehat{\mu}_i=U_i\mu_\bi$. In view of the identity
      \eqref{eqn:cavity}, we have $\mu_\bi=(K-U_i^tK_iU_i)^{-1}(h-U_i^th_i)$, and consequently
      \begin{equation*}
         \begin{aligned}
            \widehat{\mu}_i &= U_i(K-U_i^tK_iU_i)^{-1}(h-U_i^th_i)\\
              & = U_i\left[K^{-1}-K^{-1}U_i^t(-K_i^{-1}+\widehat{K}_i)^{-1}U_iK^{-1}\right](h-U_i^th_i)\\
              & = \left[I-\widehat{K}_i(-K_i^{-1}+\widehat{K}_i)^{-1}\right](U_iK^{-1}h-\widehat{K}_ih_i)\\
              & = \left[I+\widehat{K}_iK_i(I-\widehat{K}_iK_i)^{-1}\right](U_iK^{-1}h-\widehat{K}_ih_i)\\
              & = (I-\widehat{K}_iK_i)^{-1}(U_iK^{-1}h-\widehat{K}_ih_i).
         \end{aligned}
      \end{equation*}
   \item Steps 8 and 9 updates the parameter $(K_i,h_i)$ of the Gaussian approximation $\tilde{t}_i(U_ix)$.
       It relies on the Gaussian approximation $p^\ast(x)=N(x;\mu^\ast,C^\ast)$ to $t_i(U_ix)\tilde{p}^\bi(x)$, i.e.,
       \begin{equation*}
         \tilde{t}_i(U_ix)\propto p^\ast(x)/\tilde{p}^\bi(x).
       \end{equation*}
       Thus one first needs to derive the mean $\mu^\ast$ and the
       covariance $C^\ast$ in Theorem \ref{thm:kld}. By Theorem \ref{thm:proj}, with $Z=\int t_i(s_i)
       N(s_i,\widehat{\mu}_i,\widehat{C}_i)\ds_i$ and
       \begin{equation*}
         \bar{p}_i(s_i)=Z^{-1}t_i(s_i)N(s_i,\widehat{\mu}_i,\widehat{C}_i),
       \end{equation*}
       we have
       \begin{equation}\label{eqn:epapprox}
         \begin{aligned}
            \mu^\ast &= \mu_\bi+ C_\bi U_i^t\widehat{C}_i^{-1}(E_{\bar{p}_i}[s_i]-U_i\mu_\bi),\\
            C^\ast &= C_\bi + C_\bi U_i^t\widehat{C}_i^{-1}(\mathrm{Var}_{\bar{p}_i}[s_i]-\widehat{C}_i)\widehat{C}_i^{-1}U_iC_\bi.
         \end{aligned}
       \end{equation}
       Therefore, the Gaussian factor $\tilde{t}_i$, i.e., $(h_i,K_i)$, is determined by ${C^\ast}^{-1}-{C_\bi}^{-1}$
       and $(C^\ast)^{-1}\mu^\ast-(C_\bi)^{-1}\mu_\bi$, respectively.
       To this end, we first let
       \begin{equation*}
         \begin{aligned}
           C &= (\mathrm{Var}_{\bar{p}_i}[s_i]-\widehat{C}_i)^{-1}+\widehat{C}_i^{-1}U_iC_\bi (C_\bi)^{-1} C_\bi U_i^t\widehat{C}_i^{-1}
            & = (\mathrm{Var}_{\bar{p}_i}[s_i]-\widehat{C}_i)^{-1} + \widehat{C}_i^{-1},
         \end{aligned}
       \end{equation*}
       where we have used the definition  $\widehat{C}_i=U_iC_\bi U_i^t$.
       Then it follows from \eqref{eqn:epapprox} and Woodbury's formula \eqref{eqn:woodbury} that
       \begin{equation*}
         \begin{aligned}
              (C^\ast)^{-1}-(C_\bi)^{-1}
             =& \left(C_\bi + C_\bi U_i^t\widehat{C}_i^{-1}(\mathrm{Var}_{\bar{p}_i}[s_i]-\widehat{C}_i)\widehat{C}_i^{-1}U_iC_\bi\right)^{-1}-(C_\bi)^{-1}\\
             =& -(C_\bi)^{-1}C_\bi U_i^t\widehat{C}_i^{-1}C^{-1}\widehat{C}_i^{-1}U_iC_\bi (C_\bi)^{-1}\\
             =& -U_i^t\widehat{C}_i^{-1}[(\mathrm{Var}_{\bar{p}_i}[s_i]-\widehat{C}_i)^{-1}+\widehat{C}_i^{-1}]^{-1}\widehat{C}_i^{-1}U_i\\
             =& U_i^t(\mathrm{Var}_{\bar{p}_i}[s_i]^{-1}-\widehat{C}_i^{-1})U_i.
         \end{aligned}
       \end{equation*}
       Analogously, we deduce from \eqref{eqn:epapprox} and the above formula that
       \begin{equation*}
          \begin{aligned}
             (C^\ast)^{-1}\mu^\ast-(C_\bi)^{-1}\mu_\bi
             =&U_t(\mathrm{Var}_{\bar{p}_i}[s_i]^{-1}E_{\bar{p}_i}[s_i]-(\widehat{C}_i)^{-1}U_i\mu_\bi).
          \end{aligned}
       \end{equation*}
       By comparing these identities with the representation \eqref{eqn:ttilde} for
       $\tilde{t}(U_ix)$, we arrived at the desired updating formulae in Steps 8--9  for the tuple $(K_i,h_i)$:
       \begin{equation*}
          \begin{aligned}
             K_i & = \mathrm{Var}_{\bar{p}_i}[s_i]^{-1}-\widehat{C}_i^{-1},\\
             h_i & = \mathrm{Var}_{\bar{p}_i}[s_i]^{-1}E_{\bar{p}_i}[s_i]-(\widehat{C}_i)^{-1}U_i\mu_\bi.
          \end{aligned}
       \end{equation*}
	\item Steps 10--11 update the parameters $(K,h)$ of the global approximation with the current parameters
	of the $i$th factor. The update of $K$ is implemented as a Cholesky up/downdate to the Cholesky factor of $K$.
    For example, 	computing the Cholesky factor $\widehat{L}$ of the rank-one update $A \pm xx^t$ from
    the Cholesky factor $L$ of $LL^t = A$ by Cholesky up/downdate is much faster than computing the Cholesky factor
    $\widehat{L}$ directly \cite{GillGolubMurraySaunders:1974}. In particular, for one-dimensional projection
    space, we use a Cholesky update with $x=U_i^t\sqrt{K_i}$ if $K_i > 0$ and a Cholesky downdate
    with $x=U_i^t\sqrt{-K_i}$ if $K_i < 0$. A Cholesky downdate may fail if the resulting matrix is no longer
    positive definite; see Theorem \ref{thm:log_concave_well_defined} for conditions ensuring positive definiteness.
\end{enumerate}

The major computational efforts in the inner loop lie in computing the mean $\widehat{\mu}_i$ and the
inverse covariance $\widehat{C}_i^{-1}$. The mean $E_{\bar{p}_i}[s_i]$ and covariance $\mathrm{Var}_{\bar{p}_i}[s_i]$ necessitate
integral, which is tractable if and only if the projections $\{U_i\}$ have very
low-dimensional ranges. Our numerical examples with sparsity constraints in Section \ref{sec:numer} involve only one-dimensional
integrals.

Algorithm \ref{alg:epproj} is computable in exact arithmetic as long
as the inverses can be computed. This hinges on the positive definitieness
of $K$ and $(U_iK^{-1}U_i^t)^{-1}-K_i$. Note that $\mathrm{Var}_{\bar{p}_i}
[s_i]$ is positive definite by Lemma \ref{lem:posdefvariance} if the factors
are nondegenerate. However, numerical instabilities can occur if those
expressions become too close to singular. Choosing initial values for $K_i$
gives some control over the condition of $K = K_0 + \sum_i U_i^tK_iU_i$
in the first iteration. However, general bounds for the conditioning of $K$
at later iterations are still unknown.

\begin{algorithm}[h!]
   \caption{EP for nonlinear problem $F(x)=b$.}\label{alg:epnonlin}
   \begin{algorithmic}[1]
      \STATE Initialize with $\mu = \mu^0$
      \FOR{$k = 0:\mathrm{MaxIter}$}
        \STATE Linearize around $\mu^k$ such that $F(x)\approx F(\mu^k)+F'(\mu^k)(x-\mu^k) + O(\|x-\mu^k\|_2^2)$
		\STATE Use Algorithm \ref{alg:epproj} to approximate the linearized problem with a Gaussian $N(\mu^k_\ast,C^k_\ast)$
		\STATE Compute Barzilai-Borwein step size $\tau^k$
		\STATE Update $\mu^{k+1} = \mu^k + \tau^k (\mu^k_\ast - \mu^k)$
		\STATE Check the stopping criterion.
      \ENDFOR
      \STATE Output the covariance $C^\ast$ and the mean $\mu^\ast$.
   \end{algorithmic}
\end{algorithm}

Now we discuss the case of a nonlinear operator $F$. To this end, we employ the idea of recursive
linearization. The complete procedure is given in Algorithm \ref{alg:epnonlin},
where $\mathrm{MaxIter}$ is the maximum number of iterations. Like most optimization algorithms based on linearization,
step size control is often necessary. Only for problems with low degree of nonlinearity, the step
size selection can be dropped by setting $\mu^{k+1} = \mu^k_\ast$, i.e., a step size of $1$. Generally, it
improves the robustness of the algorithm and avoids jumping back and forth between two
states. There are many possible rules,
and we only mention the Barzilai-Borwein rule here.
It was proposed for gradient type methods in \cite{BarzilaiBorwein:1988}.
It approximates the Hessian by a scalar multiple of the identity matrix based on
the last two iterates and descent directions so as to exploit the curvature
information to accelerate/stabilize the descent into the minimum.
Here we adopt the following simple update rule
\begin{equation*}
  \mu^{k+1} = \mu^k + \tau^k (\mu^k_\ast - \mu^k),
\end{equation*}
and thus $\mu^k_\ast - \mu^k$ serves as a ``descent'' direction and $\mu^k$ acts as the
iterate in the Barzilai-Borwein rule. The step size $\tau^k$ is computed by
\begin{equation*}
  \tau^k=\frac{\langle\mu^k-\mu^{k-1},(\mu^k_\ast-\mu^k)-(\mu^{k-1}_\ast-\mu^{k-1})\rangle}{
  \langle \mu^k-\mu^{k-1},\mu^k-\mu^{k-1}\rangle},
\end{equation*}
which approximately solves (in a least-squares sense) the following equation
\begin{equation*}
   \tau^k (\mu^k-\mu^{k-1}) \approx (\mu^k_\ast - \mu^k) - (\mu^{k-1}_\ast - \mu^{k-1}).
\end{equation*}

\section{Numerical experiments and discussions}\label{sec:numer}
In order to illustrate the feasibility and efficiency of the proposed method, in this part, we present
numerical experiments with electrical impedance tomography (EIT) of recovering the conductivity
distribution from voltage measurements on the boundary based on sparsity constraints. EIT is one typical
parameter identification problem for partial differential equations, and sparsity is a recent promising imaging
technique \cite{JinMaass:2012ip,JinKhanMaass:2012}. The reconstructions are obtained from experimental
data with a water tank, immersed with plastic/metallic bars.

\subsection{Mathematical model: Electrical impedance tomography}
Electrical Impedance Tomography (EIT) is a non-destructive, low-cost and portable
imaging modality developed for reconstructing the conductivity distribution in the interior
of a concerned object. One typical experimental setup is as follows.
One first attaches a set of metallic electrodes to the surface of the object, and then injects
electrical currents into the object through these electrodes. The resulting potentials are then measured
on the same set of electrodes. The inverse problem is then to infer the electrical
conductivity distribution from these noisy measurements. It has found applications
in many areas, including medical imaging and nondestructive evaluation.

The most accurate model for an EIT experiment, the complete electrode model (CEM), reads
\begin{equation}\label{eqn:cem}
   \left\{\begin{aligned}
         \begin{array}{ll}
           -\nabla\cdot(\sigma\nabla v)=0 & \mbox{ in }\Omega,\\[2ex]
           u+z_l\sigma\frac{\partial v}{\partial n}=V_l& \mbox{ on } e_l, l=1,\ldots,L,\\[2ex]
           \int_{e_l}\sigma\frac{\partial v}{\partial n}ds =I_l& \mbox{ for } l=1,\ldots, L,\\ [2ex]
           \sigma\frac{\partial v}{\partial n}=0&\mbox{ on } \Gamma\backslash\cup_{l=1}^Le_l,
         \end{array}
  \end{aligned}\right.
\end{equation}
where $\Omega$ denotes the domain occupied by the object and $\Gamma$ is the boundary,
$\sigma$ is the electrical conductivity distribution, $v\in H^1(\Omega)$ is the electric
potential, $\{e_l\}_{l=1}^L\subset\Gamma$ denote the surfaces under the $L$ electrodes and
$n$ is the unit outward normal to the boundary $\Gamma$. In the model \eqref{eqn:cem}, $z_l$
denotes contact impedance of the $l$th electrode caused by the resistive layer at the
interface between the $l$th electrode and the object, and $I_l$ and $V_l$ are the electrode
current and voltage, respectively, on the $l$th electrode. The currents $\{I_l\}_{l=1}^L$
satisfy the charge conservation law, and hence $\sum_{l=1}^LI_l=0$. Upon introducing the
subspace $\RS^L=\{ w \in \R^L : \sum_{l=1}^L w_l = 0 \}$, we have $I:=(I_1,\ldots,I_L)\in \RS^L$.
Meanwhile, to ensure uniqueness of the solution to the model \eqref{eqn:cem}, the ground potential
has to be fixed; this can be achieved, e.g., by requiring $\sum_{l=1}^LV_l=0$, i.e., $V:=(V_1,\ldots,V_L)\in\RS^L$.
The model was first introduced in \cite{ChengIsaacsonNewellGisser:1989}, and mathematically studied
in \cite{SomersaloCheneyIsaacson:1992,JinMaass:2012,GehreKluthLipponenJinSeppanenKaipioMaass:2012}.
In particular, for every strictly positive conductivity $\sigma \in L^\infty(\Omega)$ and
current pattern $I\in \RS^L$ and positive contact impedances $\{z_l\}_{l=1}^L$,
there exists a unique solution $(v,V) \in H^1(\Omega)\times\RS^L$ \cite{SomersaloCheneyIsaacson:1992}.
For a fixed current pattern $I$, we denote by $F(\sigma) = V(\sigma)$ the forward operator, and in
the case of multiple input currents, we define $F(\sigma)$ by stacking respective potentials vectors.
In the computation, we discard the potentials measured on current-carrying electrodes, which makes
the model more robust with respect to contact impedances $\{z_l\}$.
The inverse problem consists of inferring the physical conductivity $\sigma^\dagger$ from the
measured, thus noisy, electrode voltages $V^\delta$.

There are many deterministic imaging algorithms for the EIT inverse problem. The Bayesian approach has gained increasing
popularity in the last few years. The first substantial Bayesian modeling in EIT imaging was presented
by Nicholls and Fox \cite{NichollsFox:1998} and Kaipio et al \cite{KaipioKolehmainenSomersaloVauhkonen:2000}.
In particular, in \cite{KaipioKolehmainenSomersaloVauhkonen:2000}, Kaipio et al applied the Bayesian
approach to EIT imaging, and systematically discussed various computational issues. West et al
\cite{WestAykroydMengWilliams:2004} studied the Bayesian formulation in medical applications of EIT,
with an emphasis on incorporating explicit geometric information, e.g., anatomical structure and
temporal correlation. The proper account of discretization errors and modeling errors has also
received much recent attention; see e.g., \cite{Arridge:2006,NissinenHeikkinenKaipio:2008}. We refer
to \cite{WatzenigFox:2009} for an overview of the status of statistical modeling in EIT imaging.

In this work, we adopt an approach based on sparsity constraints, which have demonstrated significant potentials in
EIT imaging \cite{JinKhanMaass:2012} and other parameter identification problems \cite{JinMaass:2012ip}. Now we build the probabilistic
model as follows. First, we specify the likelihood function $p(V^\delta|\sigma)$. To this end, we assume
that the noisy voltages are contaminated by additive noises, i.e., $V^\delta = V(\sigma^\dagger) + \zeta$ for
some noise vector $\zeta $, and further, the noise components follow an independent and identically distributed
Gaussian distribution with zero mean and inverse variance $\alpha$. Then the likelihood function $p(V^\delta|\sigma)$ reads
\begin{equation*}
  p(V^\delta|\sigma) \propto e^{-\frac{\alpha}{2}\|F(\sigma)-V^\delta\|^2_2}
\end{equation*}
The prior distribution $p(\sigma)$ encodes our a priori knowledge or physical constraints on the
sought-for conductivity distribution $\sigma$. First of all, the conductivity is pointwise non-negative due
to physical constraints. Mathematically, the forward model \eqref{eqn:cem} is not well defined
for non-positive conductivities, and it degenerates if the conductivity tends to zero. Thus we enforce
strict positivity on the conductivity $\sigma$ using the characteristic function $\chi_{[\Lambda,\infty]}(\sigma_i)$
on each component with a small positive constant $\Lambda$. Second, we assume that the conductivity
consists of a known (but possibly inhomogeneous)
background $\sigma^\text{bg}\in\R^n$ plus some small inclusions. As is now widely accepted,
one can encapsulate this ``sparsity'' assumption probabilistically by using a Laplace prior on the
difference $\sigma-\sigma^\text{bg}$ of the sought-for conductivity $\sigma$ from the background $\sigma^\text{bg}$:
\begin{equation*}
   \tfrac{\lambda}{2} e^{-\lambda||\sigma-\sigma^\text{bg}||_{\ell^1}},
\end{equation*}
where $\lambda>0$ is a scale parameter, playing the same role of a regularization parameter in regularization methods.
Intuitively, the $\ell_1$ norm penalizes many small deviations from $\sigma^\text{bg}$ stronger than a few big
deviations, thereby favoring small and localized inclusions. We note that one may incorporate an additional smooth prior
to enhance the cluster structure of typical inclusions 

The Bayesian formulation involves two hyperparameters ($\alpha$, $\lambda$), and
their choice is important for the reconstruction resolution. The inverse variance
$\alpha$ should be available from the specification of the measurement equipment.
The choice of the scale parameter $\lambda$ (in the Laplace distribution) is much
more involved. This is not surprising since it plays a role analogous to the
regularization parameter in Tikhonov regularization, which is known to be highly
nontrivial. One systematic way is to use hierarchical models \cite{WangZabaras:2005},
i.e., viewing the parameters as unknown random variables with their own (hyper-)priors.
However, in general, an automated choice of the hyperparameters is still an actively
researched topic. Here we have opted for an ad hoc approach: an initial value for
$\lambda$ was first estimated using a collection of typical reconstructions and
then fine tuned by trial and error.

By Bayes' formula, this leads to the following posterior distribution $p(\sigma|V^\delta)$
of the sought-for conductivity distribution $\sigma$
\begin{equation*}
   p(\sigma|V^\delta) \sim e^{-\tfrac{\alpha}{2}||F(\sigma)-V^\delta||_2^2}  e^{-\lambda||\sigma-\sigma^\text{bg}||_{\ell^1}}
   \prod_{i=1}^n\chi_{[\Lambda,\infty]}(\sigma_i).
\end{equation*}

To apply the EP algorithm to the posterior $p(\sigma|V^\delta)$, we factorize it into
\begin{align*}
t_0(\sigma) &= e^{-\tfrac{\alpha}{2}||F(\sigma)-V^\delta||_2^2}, \\
t_i(\sigma) &= e^{-\lambda|\sigma_i-\sigma_i^\text{bg}|} \chi_{[\Lambda,\infty]}(\sigma_i),\quad i=1,\ldots,n.
\end{align*}
Then the factor $t_0$ is Gaussian (after linearization,
see Algorithm \ref{alg:epnonlin}), and the remaining factors $t_1,\ldots,t_n$ each depend only on one
component of $\sigma$. This enables us to rewrite the necessary integrals, upon appealing to
Theorem \ref{thm:proj}, into one-dimensional ones. They have to be carried out semi-analytically,
though, because a purely numerical quadrature may fail to deliver the required accuracy due to the singular
shape of the Laplace prior, which is plagued with possible cancelation and underflow.

Finally, we briefly comment on the computational complexity of
the EP algorithm. It needs $n$ evaluations of the forward operator $F$ to
compute the linearized model $F'$ in the outer iteration. Each evaluation of $F$ is dominated
by solving a linear problem of size $n\times n$, and for each outer evaluation the stiffness
matrix remains unchanged and one can employ Cholesky decomposition to reduce the
computational efforts. Further, it can be carried out in parallel if desired.
Each inner iteration (linear EP) involves $n$ EP updates, which is dominated by one
Cholesky up/down-date of $K$ and one forward/backward substitution. The
cost of performing the low dimensional integration is negligible compared with other pieces.
With $k$ outer iterations and $l$ inner iterations for each outer iteration, this gives a
total cost of
\[ O( kn^3/3 + kln(n + n^2/2)). \]

\subsection{Numerical results and discussions}
In this part, we illustrate the performance of the EP algorithm with real experimental data on a water tank.
The measurement setup is illustrated in Fig. \ref{fig:exp}. The diameter of the
cylindrical tank was $28$ cm and the height was $10$ cm. For the EIT measurements, sixteen
equally spaced metallic electrodes (width $2.5$ cm, height $7$ cm) were attached to the
inner surface of the tank. The tank was filled with tap water, and objects of different
shapes and materials (either steel or plastic) were placed in the tank. We consider six
cases listed in Table \ref{tab:exam}. All the inclusions
were symmetric in height. In all cases, the excess water was removed from the tank, so
that the height of water level was also 7 cm, the same as the height of the electrodes.
The EIT measurements were conducted with the KIT 2 measurement system \cite{Savolainen2003}.
In these experiments, fifteen different current injections were carried out, between
all adjacent electrode pairs. For each injection, 1 mA of current was injected into
one electrode and the other electrode was grounded. Then the potentials of all other
fifteen electrodes with respect to the grounded electrode were measured.
\begin{table}[h!]
   \centering
   \caption{Description of experiments.} \label{tab:exam}
  \begin{tabular}{c|l}
     \hline
     case & description\\
     \hline
     1 & one plastic cylinder\\
     2 & two plastic cylinders\\
     3 & two neighboring plastic cylinders\\
     4 & three neighboring plastic cylinders\\
     5 & one plastic and one metallic cylinders\\
     6 & two plastic and one metallic cylinders\\
  \hline
  \end{tabular}
\end{table}

Due to cylindrical symmetry of the inclusions and the water tank, a two-dimensional CEM model was adequate
for describing the solution of the EIT forward problem. We note that with the two-dimensional approximation
of a cylindrical target, the conductivity $\sigma$ represents the product $\gamma h$, where $\gamma$ is the
(cylindrically symmetric) three-dimensional conductivity distribution and $h$ is the height of the cylinder.
Accordingly, $z_l$ represents $z_l=\xi_l/h$, where $\xi_l$ is the contact impedance in a three-dimensional
model. The model was discretized by the piecewise linear finite element method, since the forward solution
$u(\sigma)$ has only limited regularity, especially in the regions close to surface electrodes. The domain
$\Omega$ was triangulated into a mesh with $424$ nodes and $750$ triangle elements, and the mesh is locally refined around
the surface electrodes; see Fig. \ref{fig:exp} for a schematic illustration. The conductivity is only computed
on $328$ inner nodes since it is fixed on the boundary. This is reasonable when the inclusions are in the
interior, which is the case for our experiments. Further, the same finite element space was used for discretizing both the
potential $ u(\sigma)$ and the conductivity $\sigma$.

\begin{figure}
\centering
\begin{tabular}{cc}
\includegraphics[trim = 4.5cm 2cm 4cm 3.5cm, clip=true,height=5cm]{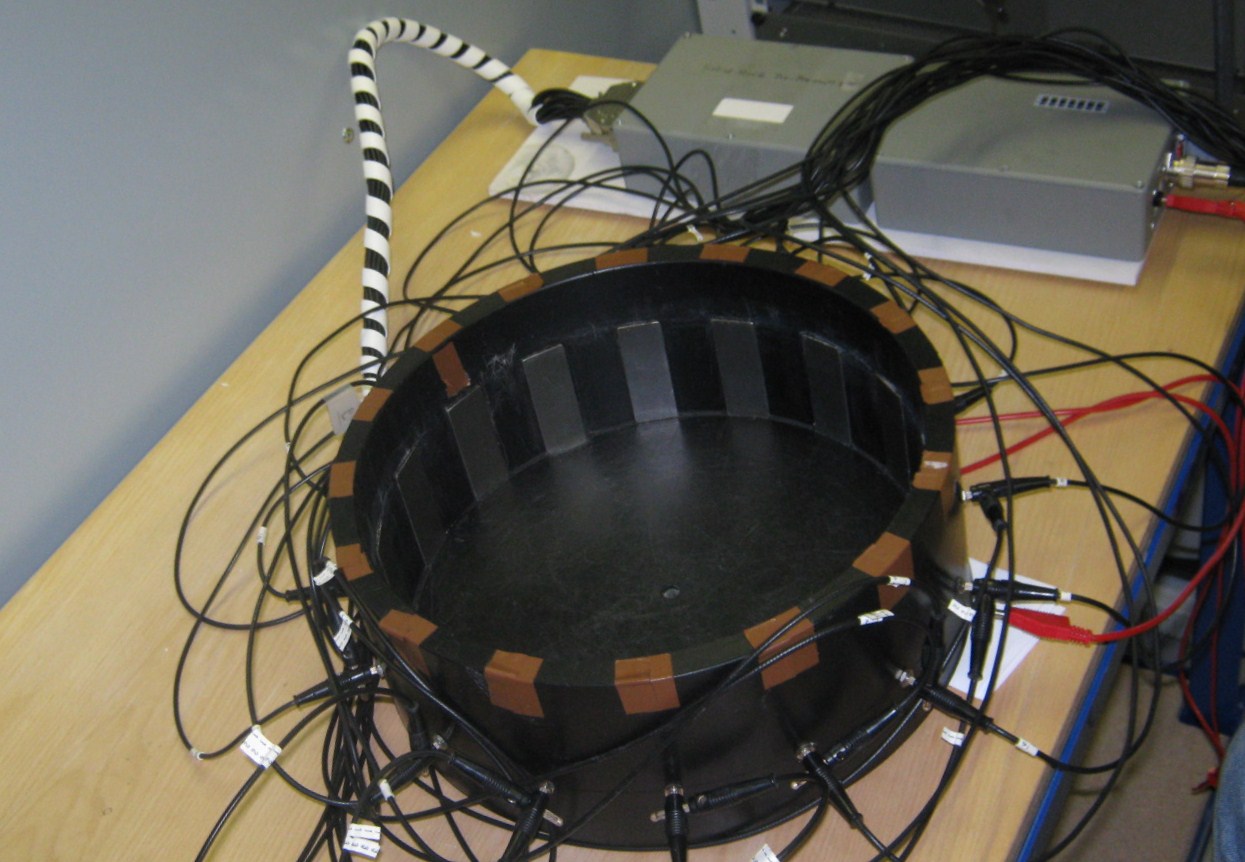} &
\includegraphics[height=5cm]{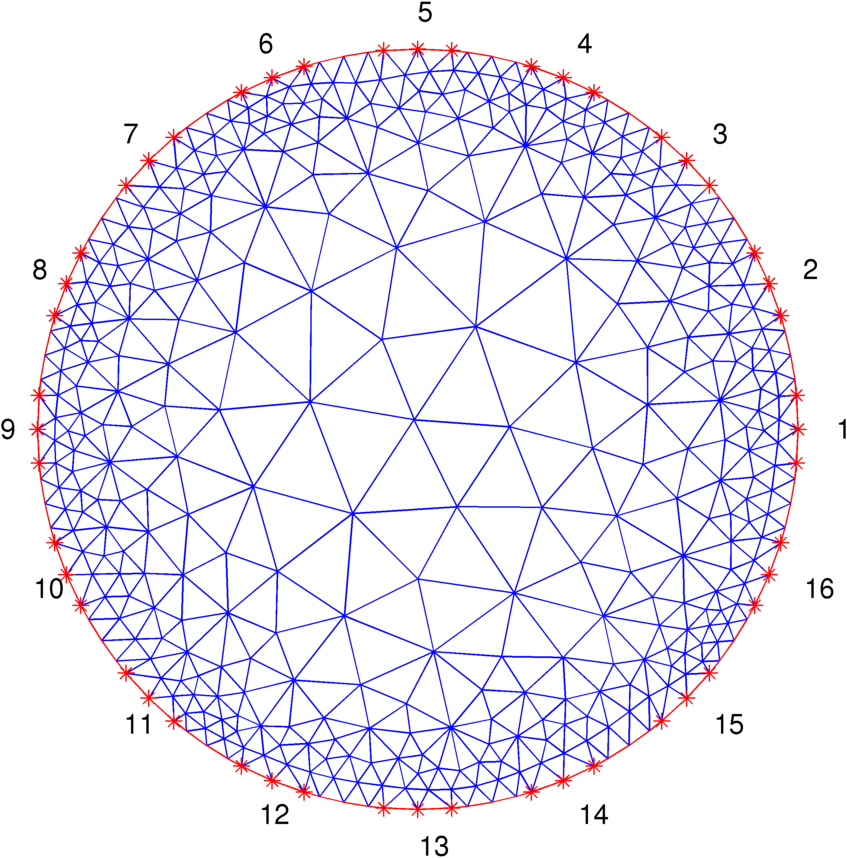}\\
experimental setup & finite element mesh
\end{tabular}
\caption{The experimental setup and the finite element mesh for forward problems.}
\label{fig:exp}
\end{figure}

The contact impedances $\{z_l\}_{l=1}^L$ and the background conductivity
$\sigma^\text{bg}$ were determined from a separate experiment. That is, we carried out additional EIT
measurements using tank filled solely with tap water. The estimation of the contact
impedances $\{z_l\}_{l=1}^L$ and the constant background conductivity $\sigma^\text{bg}$
is a well-posed problem; here, $\{z_l\}_{l=1}^L$ and $\sigma^{\rm
bg}$ were reconstructed using the standard least-squares fitting \cite{VilhunenKaipioVauhkonen:2002}.
The estimated background conductivity $\sigma^\text{bg}$ was $1.41 \cdot
10^{-3}~\Omega^{-1}$. In real (3D) conductivity units, this is $\gamma_\text{bg} =
\sigma^\text{bg}/h = 1.41 \cdot 10^{-3}~\Omega^{-1}/0.07~\mathrm{m} = 0.02
~\Omega^{-1}\mathrm{m}^{-1}$. This value is plausible -- according to literature, the
conductivity of drinking water varies between $0.0005 ~\Omega^{-1}\mathrm{m}^{-1}$ and
$0.05 ~\Omega^{-1}\mathrm{m}^{-1}$. The estimated contact impedances $\{z_l\}_{l=1}^L$ are shown
in Table \ref{tab:contactimp}. Conversion to contact impedances in real units is obtained by
multiplying the listed values by $h=0.07~\mathrm{m}$.

\renewcommand{\arraystretch}{1.2}
\setlength{\tabcolsep}{3pt}
\begin{table}[h!]
  \centering
  \caption{Estimated contact impedances $\{z_l\}$.}
  \begin{tabular}{c|cccccccccccccccc}
  \hline
  Electrode                        &1    &2   &3   &4   &5   &6   &7   &8   &9   &10  &11  &12  &13  &14  &15  & 16 \\
  $z_{l}(10^{-4}\Omega\mathrm{m})$ &2.64 &3.00&2.76&4.27&3.50&4.30&3.91&2.35&2.01&2.21&2.04&1.43&2.98&2.78&2.92&3.40\\
  \hline
  \end{tabular}\label{tab:contactimp}
\end{table}

The nonlinear EP algorithm was initialized with the estimated background conductivity $\sigma^\text{bg}$
of tap water, and the Barzilai-Borwein stepsize $\tau_k$ was backtracked as $\tau_k=\max(0,\min(\tau_k,1))$.
The parameters of the distributions were set to $\alpha = 6.9 \times10^4$ and $\lambda = 3.0\times10^4$ for all cases.
To illustrate the accuracy of the EP algorithm, we present also the results by a standard
Metropolis-Hastings MCMC algorithm (see Algorithm \ref{alg:mcmc} for the complete procedure)
with a normal random walker proposal distribution \cite{Liu:2008},
with a sample size of $1.0\times10^8$. The random walk stepsize is chosen
such that the acceptance ratio is close to 0.234, which is known to be optimal \cite{GelmanRobertsGilks:1996}.
It is well known that the convergence of MCMC methods is nontrivial to assess. Hence, we run eight chains
in parallel with random and overdispersed initial values and different random seeds, and then compute the accuracy
from these parallel chains. This enables us to directly compare the results of all chains by computing the maximum
relative error of each chain from the mean of all chains. We also compute the Brooks-Gelman statistic
\cite{BrooksGelman:1998}, which compares the within-chain variance to the between-chain variance, and
the statistics tends to one for convergent chains. In Table \ref{tab:mcmc}, we present the relative
errors of the mean and the standard deviation, and the Brooks-Gelman statistics. It is observe that
the chains for all cases, except case 1, converge reasonably well. In the first case, we could not obtain convergence,
and hence the MCMC results for case 1 should be interpreted with care. In Fig. \ref{fig:mcmc_accuracy}, we show
the results of the eight MCMC chains for case 1. The means of the means and standard deviations for all eight
chains are shown in blue, where the error bar is drawn as $1.96$ times the standard deviation componentwise,
which corresponds to the $95\%$ credible interval for a one-dimensional Gaussian distribution. The componentwise
minimum/maximum of expectation and standard  deviation are drawn in red. The means agree well with each other,
which further confirms the convergence of the chains. However, the accuracy of the standard deviation is not
very high, even with the large number of MCMC samples.

\begin{figure}[h!]
\centering
\begin{tabular}{ccc}
\includegraphics[height=3cm]{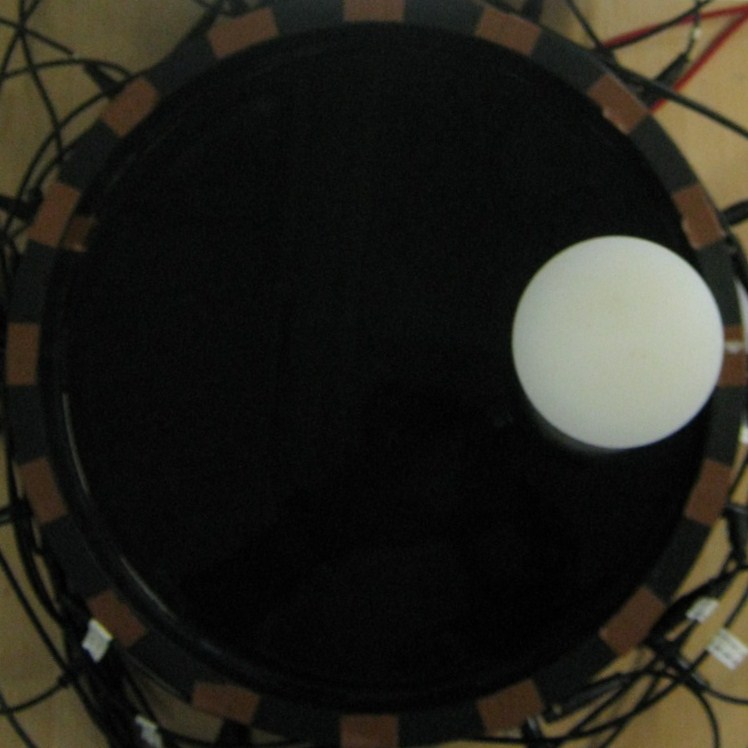}& \includegraphics[height=3cm]{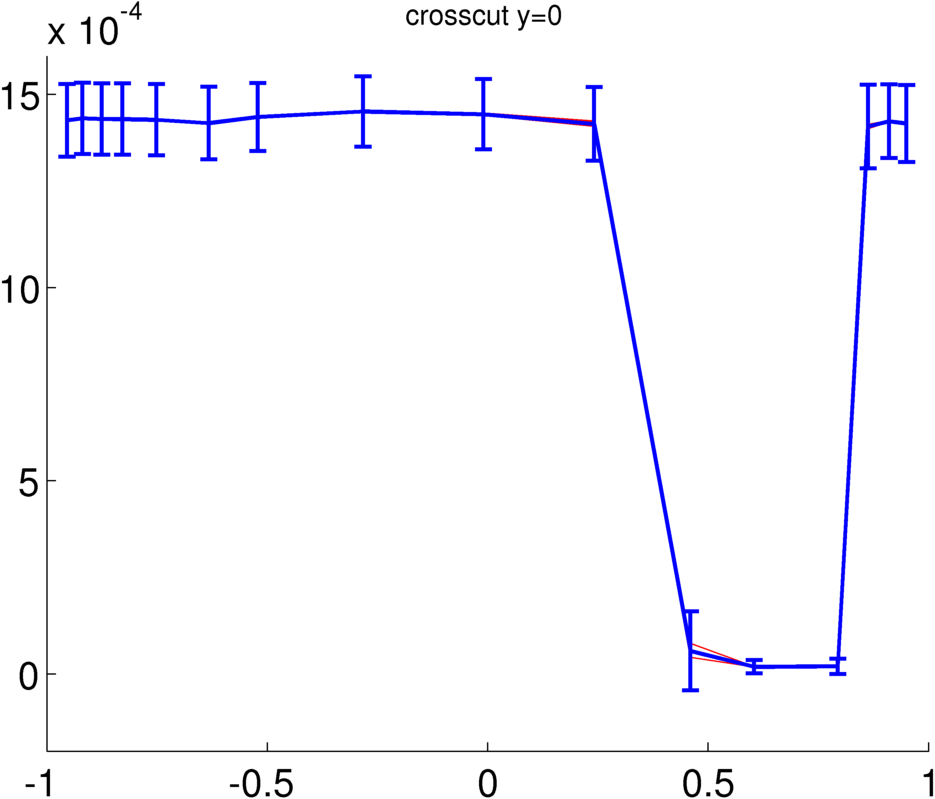}\\
\includegraphics[height=3cm]{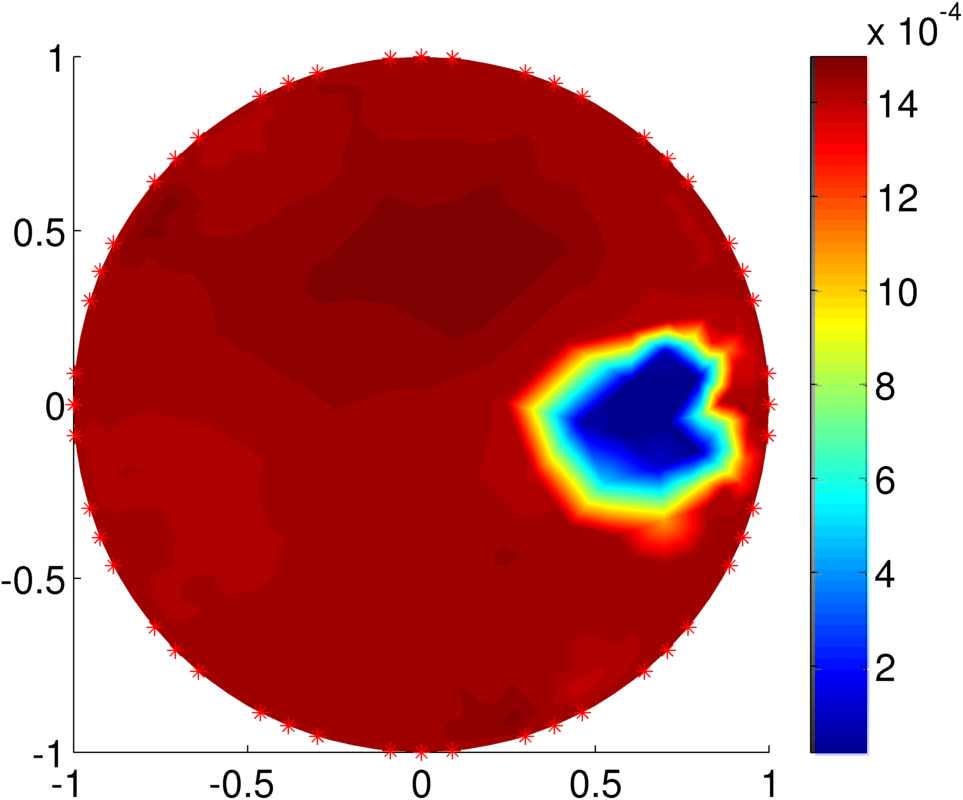}&\includegraphics[height=3cm]{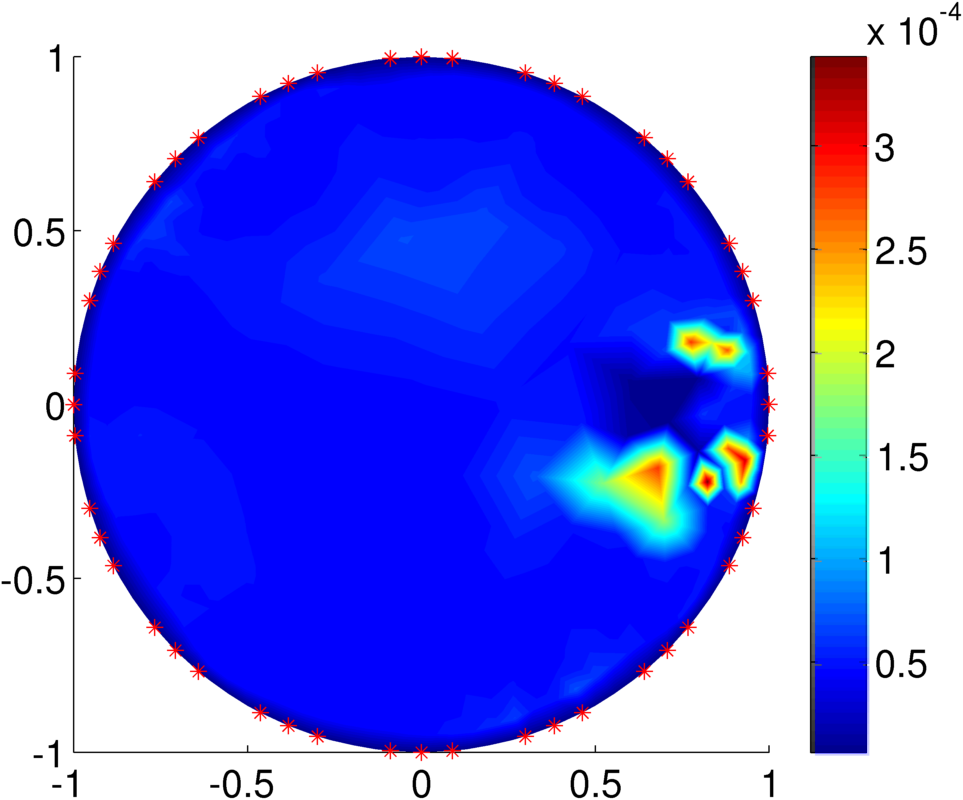}
\end{tabular}
\caption{MCMC accuracy for case 1, one plastic inclusion. Top: Photography and horizontal
cross-section with componentwise 95\% intervals. Bottom: The mean of expectation and standard deviation of all eight chains.}
\label{fig:mcmc_accuracy}
\end{figure}

\begin{table}[h!]
  \centering
  \caption{The accuracy of eight MCMC chains: relative errors in the mean
  and standard deviation, and Brooks-Gelman statistic.}
  \begin{tabular}{c|cccccc}
  \hline
  case               & 1        & 2      & 3      &4       &5       &6\\
  \hline
  mean               & 1.68e+0 &8.60e-2 &2.86e-2 &7.03e-2 &1.89e-2 &8.78e-2\\
  standard deviation & 9.72e-1 &7.72e-2 &7.30e-2 &1.53e-1 &7.35e-2 &1.51e-1\\
  Brooks-Gelman      & 561.17  &1.0069  &1.0031  &1.0045  &1.0034  &1.0139\\
  \hline
  \end{tabular}\label{tab:mcmc}
\end{table}

The reconstructions using the serial EP algorithm took about 2 minutes with up to $7$
outer iterations and a total of $22$ inner iterations on a $2.6$ GHz CPU, the MCMC algorithm
took more than one day on a $2.4$ GHz CPU. To show the convergence of the algorithm, in Fig.
\ref{fig:EP_iterates} we show the mean of the approximation $\tilde{p}$ at intermediate iterates,
where each row refers to one outer iteration.
Further, in Table \ref{tab:EPiterates} we show the changes of the mean and covariance
in the complete EP algorithm, i.e., Algorithm \ref{alg:epnonlin}. In the table, we present the 2-norm
of the difference at each iteration relative to the previous iterate and the last iterate
(the converged approximation), which are indicated as $e_{p}(\mu)$ and $e_{p}(C)$ for the error relative to
the previous iterate (respectively $e_f(\mu)$ and $e_f(C)$ for that relative to the last iterate).
It is observed from the table that the errors decrease steadily as the EP iteration proceeds, and, like
the MCMC algorithm, the convergence of the covariance is slower than that of the mean.

\begin{figure}[h!]
\centering
\begin{tabular}{cccccc}
\hline
1&\includegraphics[height=2cm]{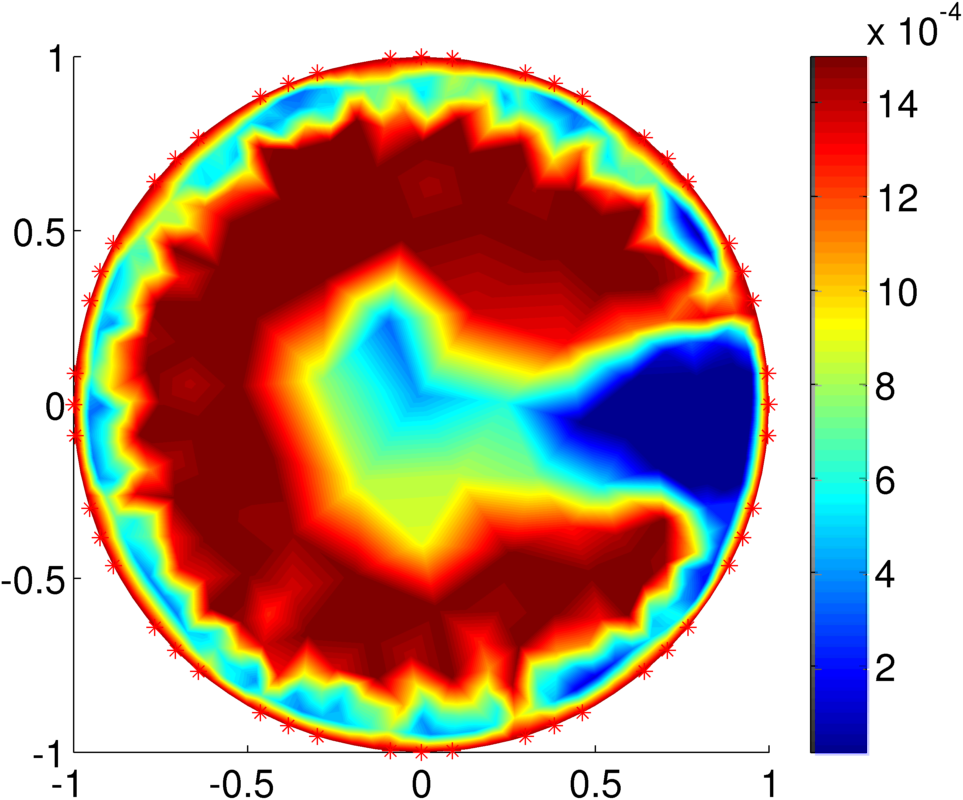}&
\includegraphics[height=2cm]{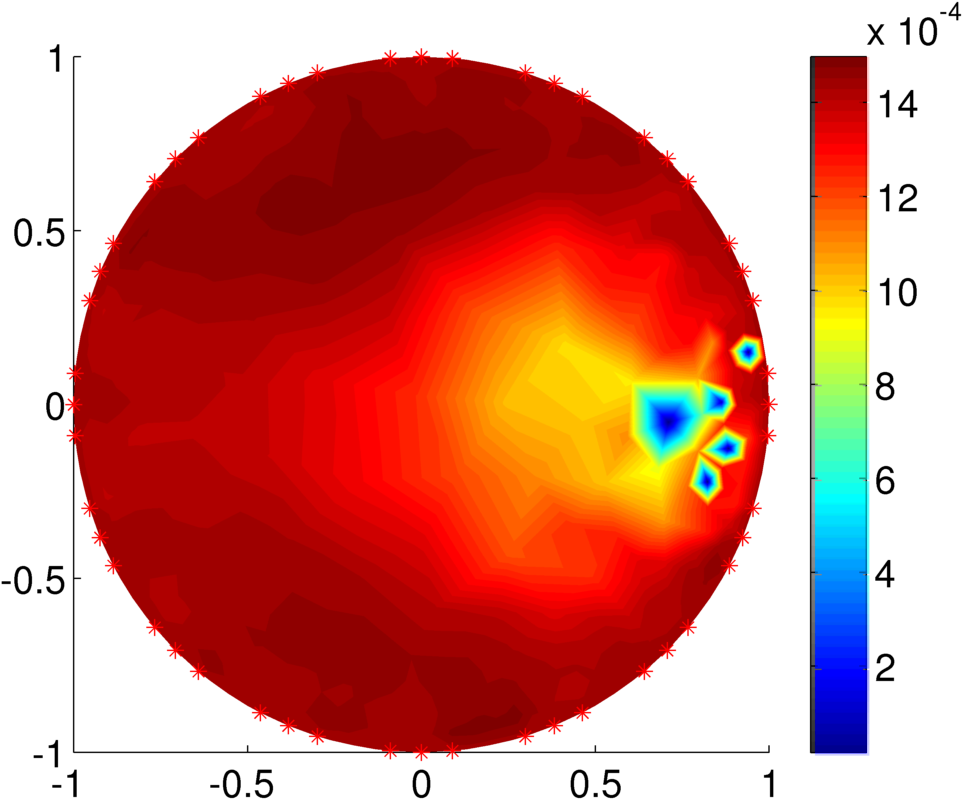}&
\includegraphics[height=2cm]{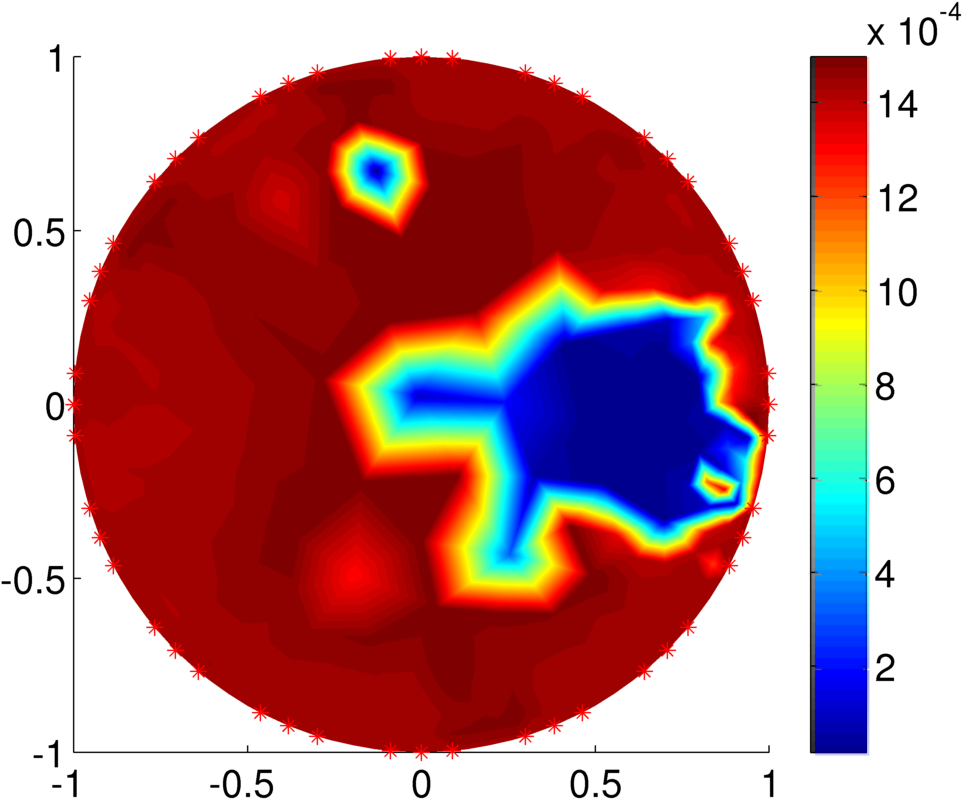}&
\includegraphics[height=2cm]{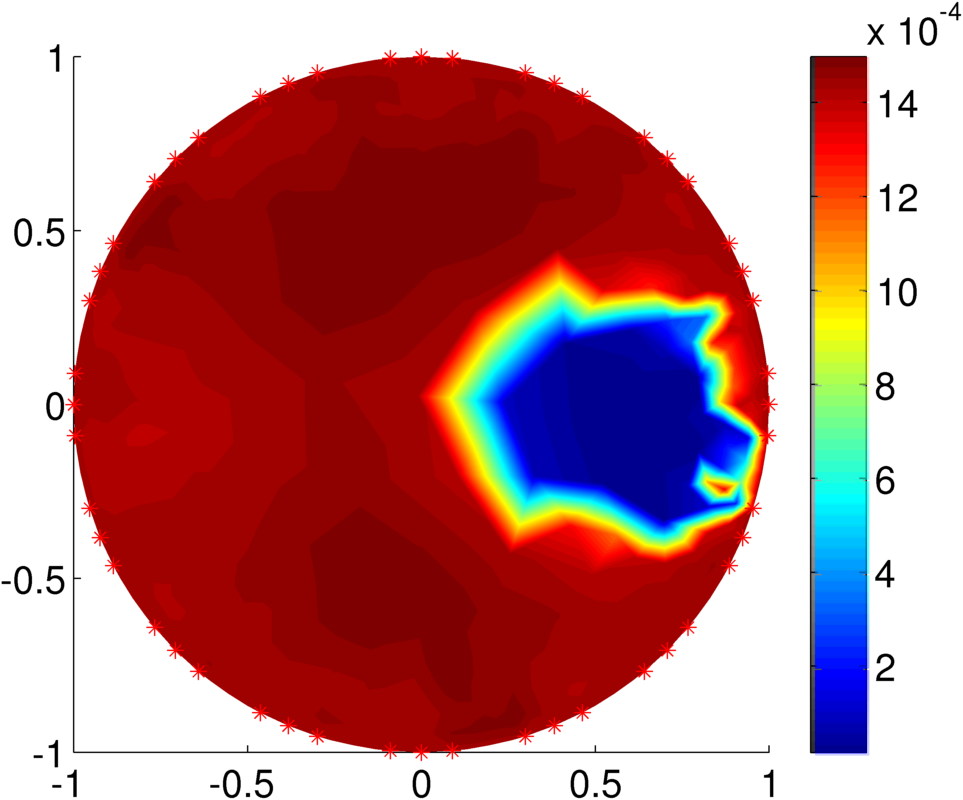}&
\includegraphics[height=2cm]{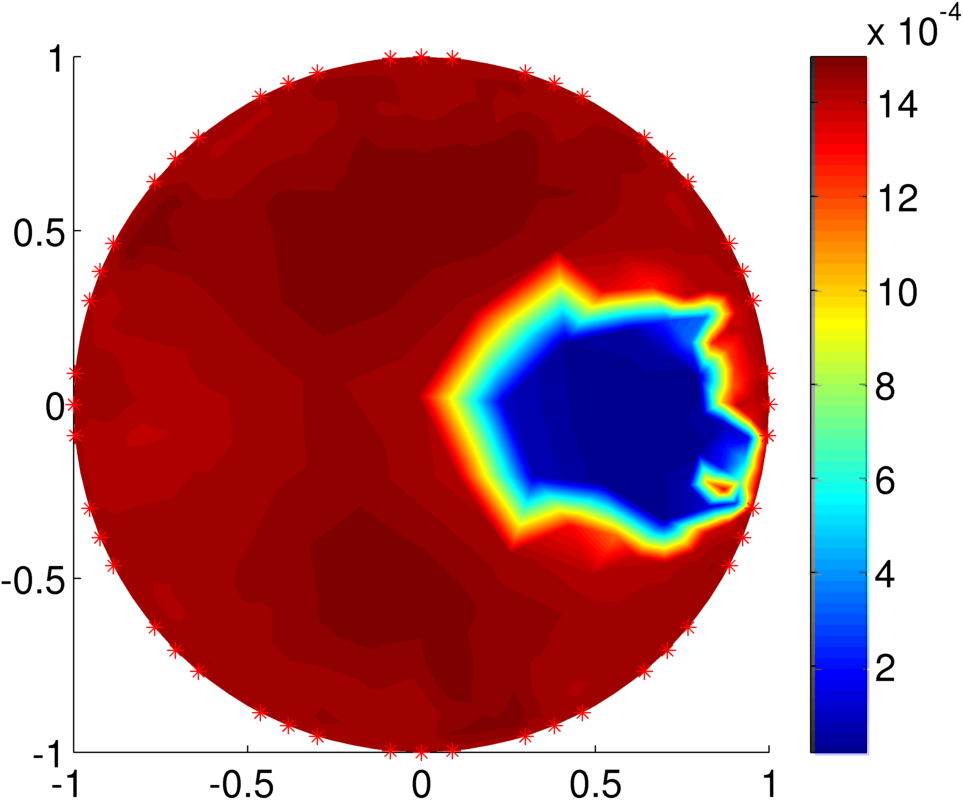}\\
2&\includegraphics[height=2cm]{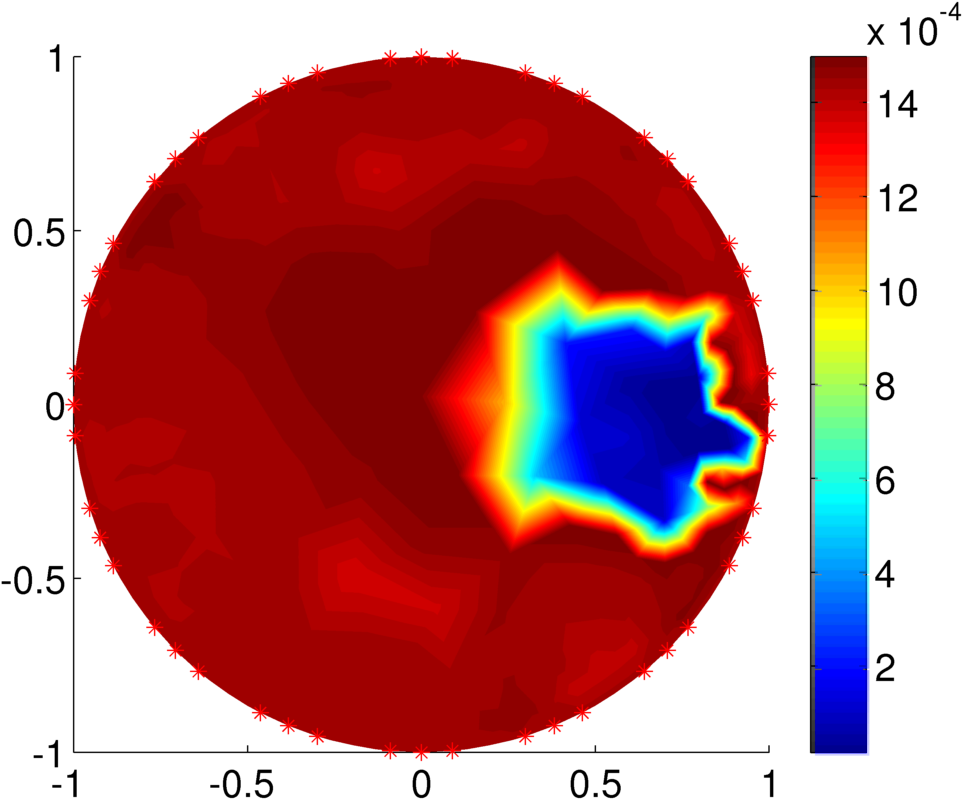}&
\includegraphics[height=2cm]{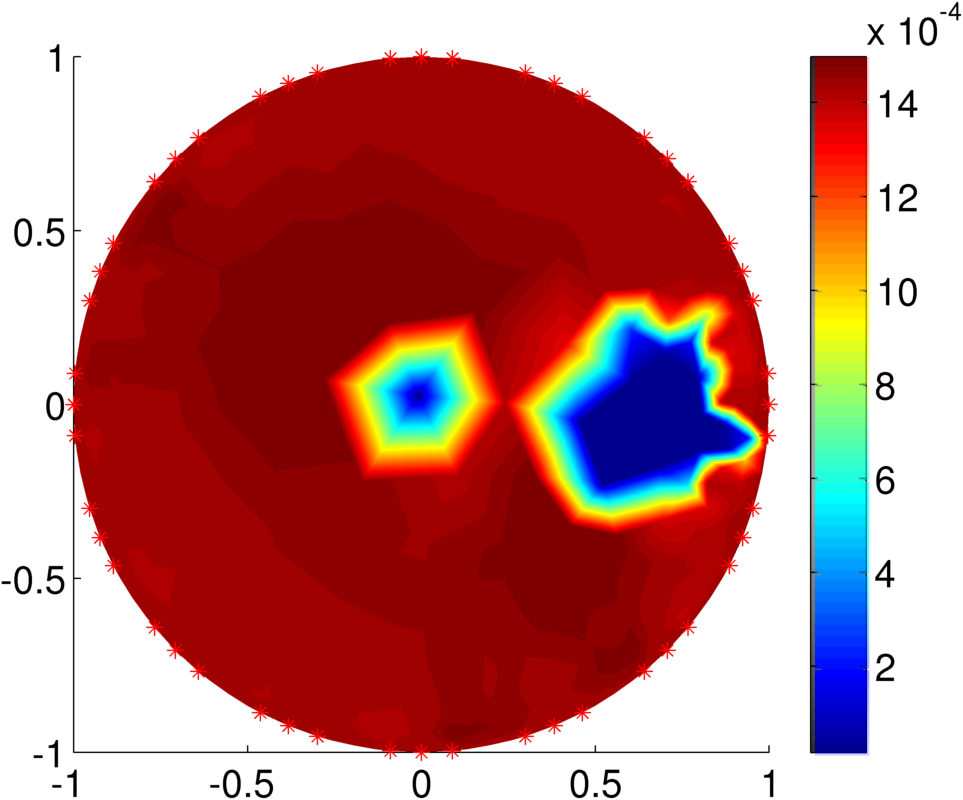}&
\includegraphics[height=2cm]{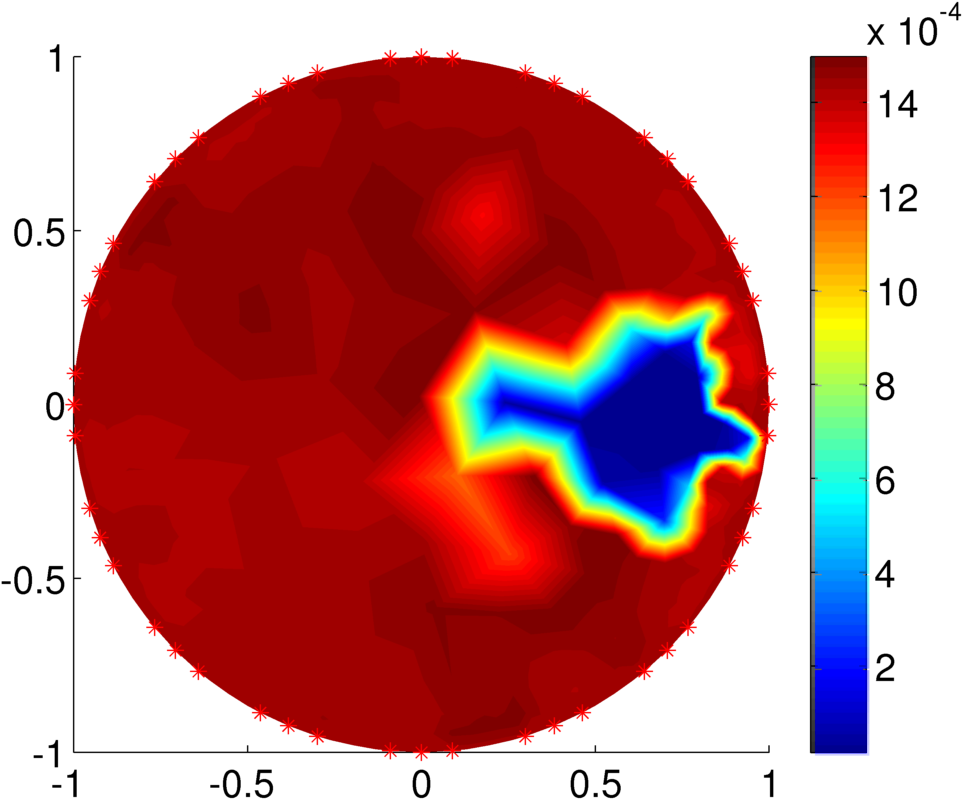}\\
4&\includegraphics[height=2cm]{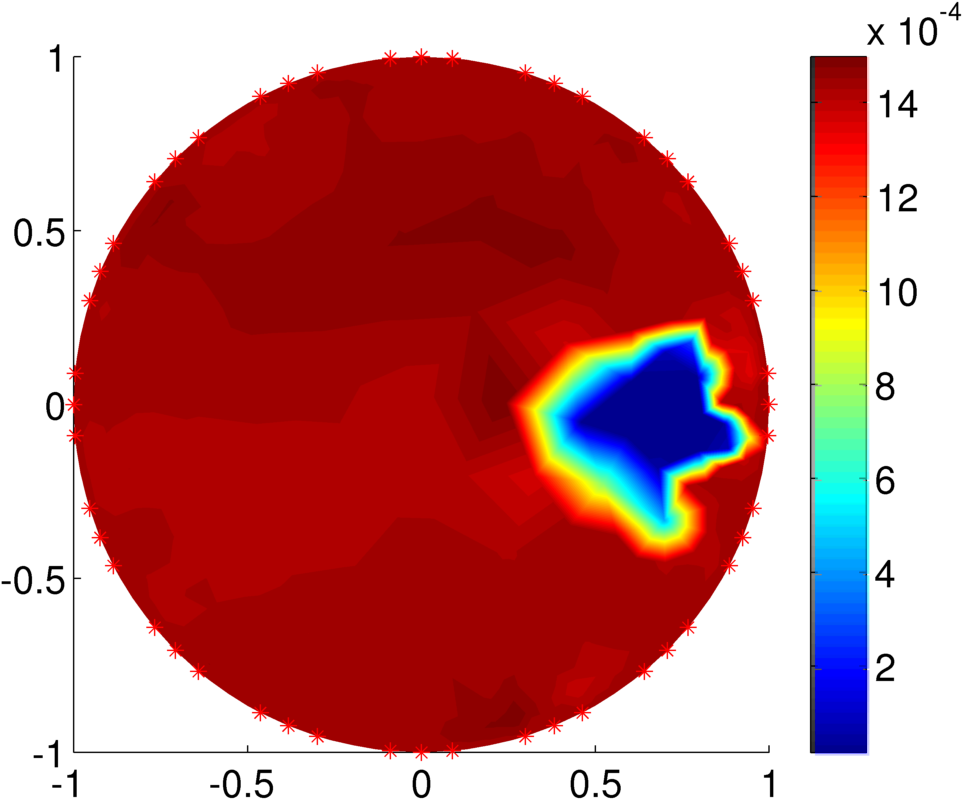}&
\includegraphics[height=2cm]{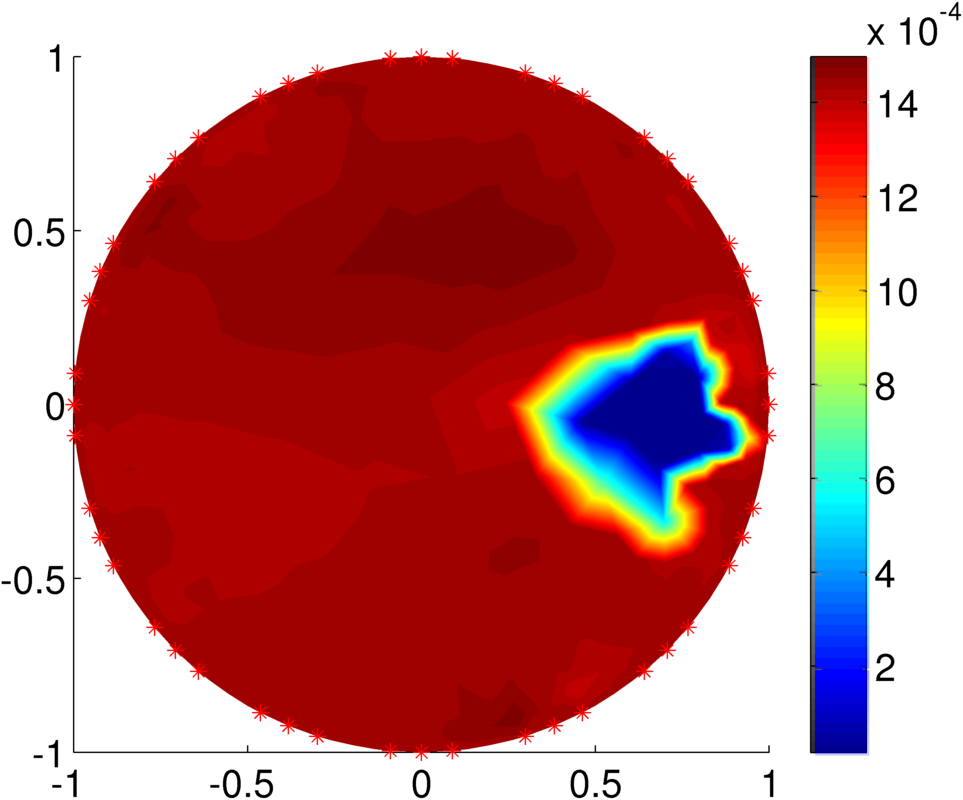}&
\includegraphics[height=2cm]{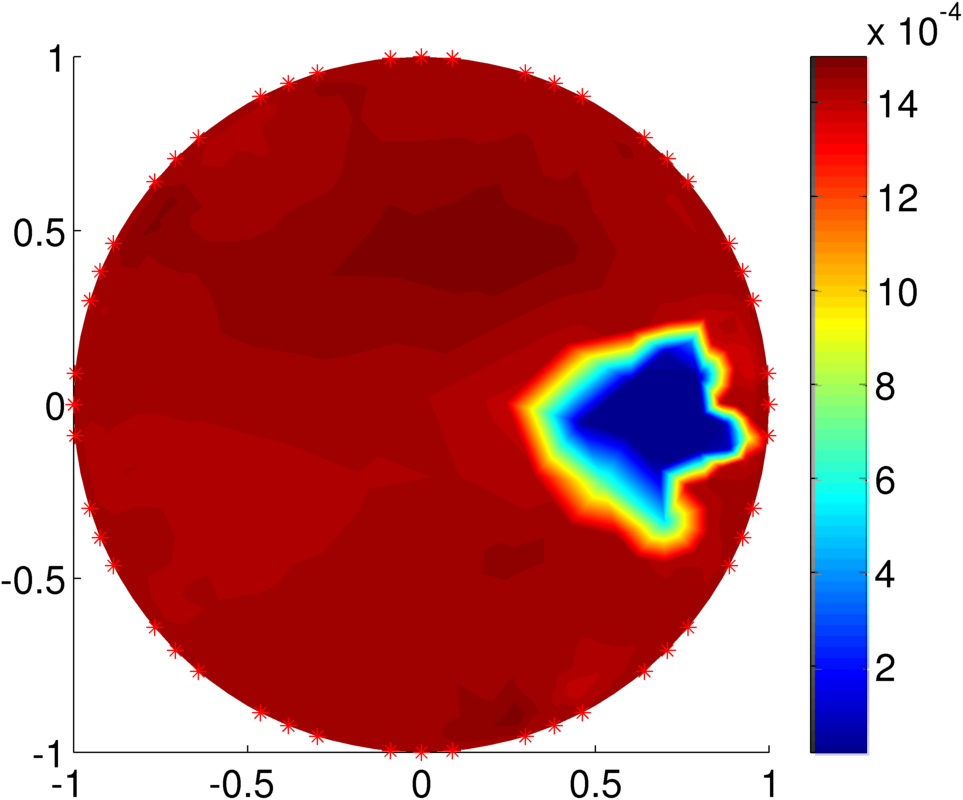}\\
5&\includegraphics[height=2cm]{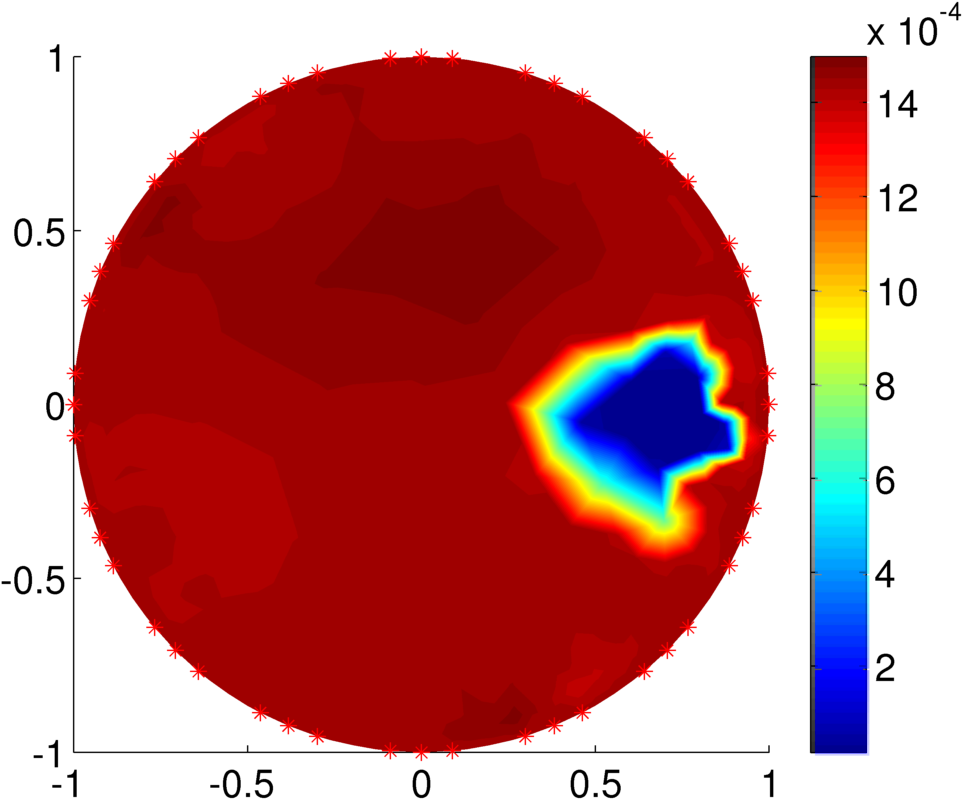}&
\includegraphics[height=2cm]{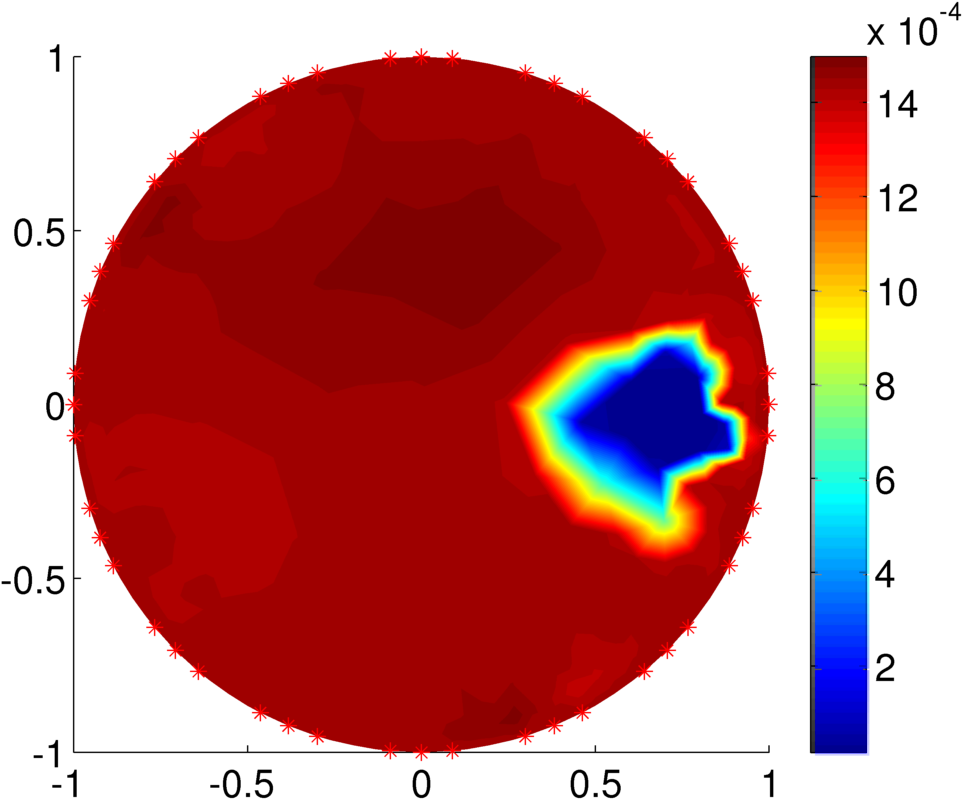}\\
\hline
$k$ & 1st EP & 2nd EP & 3rd EP & 4th EP& 5th EP\\
\hline
\end{tabular}
\caption{An illustration of the EP iterates for case 1. Plotted are the mean of each
inner EP iterate, and each row refers to one outer iteration ($k$ is the $k$th outer iteration).}
\label{fig:EP_iterates}
\end{figure}

\begin{table}[h!]
  \centering
  \caption{Convergence behavior of the EP algorithm for case 1. Here $k$ and $j$ refer respectively to the
  outer and inner iteration index, $e_{p}(\mu)$ and $e_{p}(C)$ denote the error relative of the mean $\mu$ and
  covariance $C$ relative to the previous iterate, and $e_f(\mu)$ and $e_f(C)$ relative to the last iterate. }
  \begin{tabular}{c|c|cccc}
  \hline
     $k$& $j$ & $e_{p}(\mu)$ & $e_f(\mu)$ & $e_{p}(C)$  & $e_f(C)$\\
  \hline
   &1 &6.95e-1 &6.21e-1 &1.87e1  &1.87e1\\
   &2 &7.23e-1 &4.74e-1 &1.87e1  &1.82e0\\
  1 &3&5.29e-1 &2.53e-1 &6.40e0  &6.36e0\\
   &4 &1.71e-1 &1.78e-1 &5.57e0  &3.69e0\\
   &5 &1.98e-3 &1.78e-1 &6.76e-1 &3.54e0\\
   \hline
   &1 &2.40e-1 &2.26e-1 &1.15e0  &4.03e0\\
   &2 &2.58e-1 &1.71e-1 &3.96e0  &2.55e0\\
  2 &3&1.60e-1 &1.04e-1 &2.60e0  &3.07e0\\
   &4 &2.86e-2 &9.63e-2 &2.54e0  &3.56e0\\
   &5 &1.58e-3 &9.63e-2 &2.75e-1 &3.67e0\\
      \hline
   &1 &1.46e-1 &1.21e-1 &1.23e0  &3.85e0\\
   &2 &1.07e-1 &5.74e-2 &3.54e0  &8.67e-1\\
  3 &3&4.45e-2 &3.39e-2 &6.83e-1 &7.92e-1\\
   &4 &2.32e-2 &4.47e-2 &4.48e-1 &4.62e-1\\
   &5 &4.39e-3 &4.34e-2 &3.06e-1 &7.06e-1\\
      \hline
   &1 &3.11e-2 &2.36e-2 &1.04e-1 &6.97e-1\\
  4 &2&1.11e-2 &1.74e-2 &4.56e-1 &2.90e-1\\
   &3 &1.91e-3 &1.79e-2 &2.17e-1 &3.35e-1\\
     \hline
  5 &1&1.80e-2 &1.28e-3 &1.94e-1 &1.58e-1\\
   &2 &1.28e-3 &0.00e0  &1.58e-1 &0.00e0\\
  \hline
  \end{tabular}\label{tab:EPiterates}
\end{table}

The results are shown in Figs. \ref{fig:exp3}-\ref{fig:exp16}. In the figures, the left column
shows a photography of the experiment, i.e., the watertank with the inclusions.  The middle and right
columns present the reconstructions by the EP algorithm and the MCMC method, respectively; the first
and second rows present the posterior mean and the diagonal of the posterior standard deviation,
respectively.

In all the experiments, EP and MCMC usually show a very good (mostly indistinguishable) match on the
mean. For the variance, EP usually captures the same structure as MCMC, but the magnitudes are slightly
different. The variance generally gets higher towards the center of the watertank (due to less
information encapsulated in the measurements), and at the edges of detected inclusions (due to
uncertainity in the edge's exact position). Further, compared with cases 1-4, the standard deviation of
cases 5-6, which contains metallic bars, is much larger, i.e., the associated uncertainties are larger.

Finally, we observe that the Bayesian reconstructions with the Laplace prior contain
many close-to-zero components, however it is not truly sparse but only approximately sparse.
This is different from the Tikhonov formulation, which yields a genuinely sparse reconstruction,
as long as the regularization parameter is large enough; see the reconstructions in
\cite{GehreKluthLipponenJinSeppanenKaipioMaass:2012}.
While the Tikhonov formulation yields only a point estimate, the EP reconstruction gives information
about the mean, thus a weighted superposition of all possible reconstructions.
The visually appealing sparsity of a Tikhonov minimizer can be misleading, because it only shows one solution
in an ensemble of very different but (almost) equally probable reconstructions.

\begin{figure}
\centering
\begin{tabular}{ccc}
\includegraphics[height=3cm]{exp3.jpg}& \includegraphics[height=3cm]{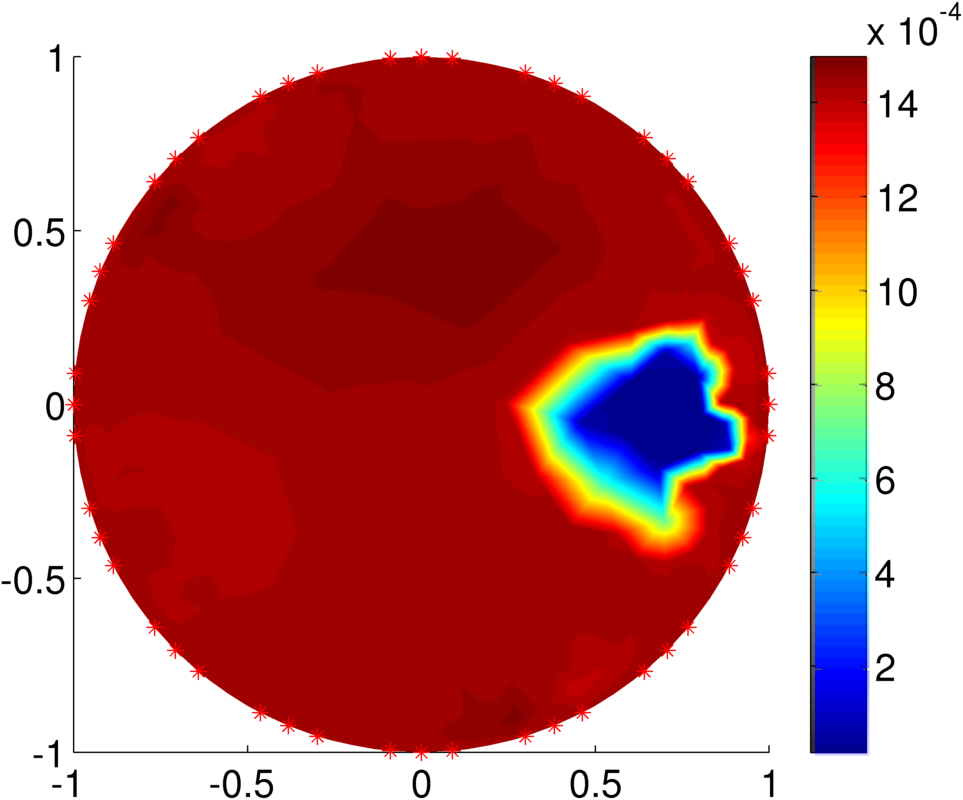} & \includegraphics[height=3cm]{cem-exp3-mcmc-exp.png}\\
        & \includegraphics[height=3cm]{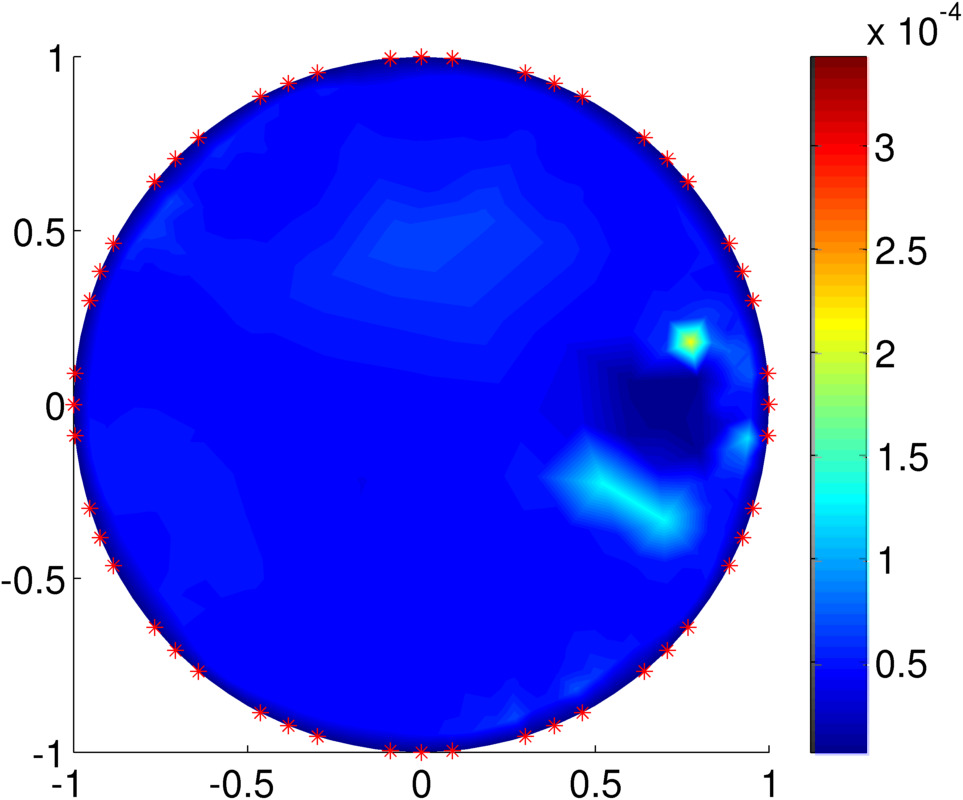} & \includegraphics[height=3cm]{cem-exp3-mcmc-std.png}\\
        & \includegraphics[height=3cm]{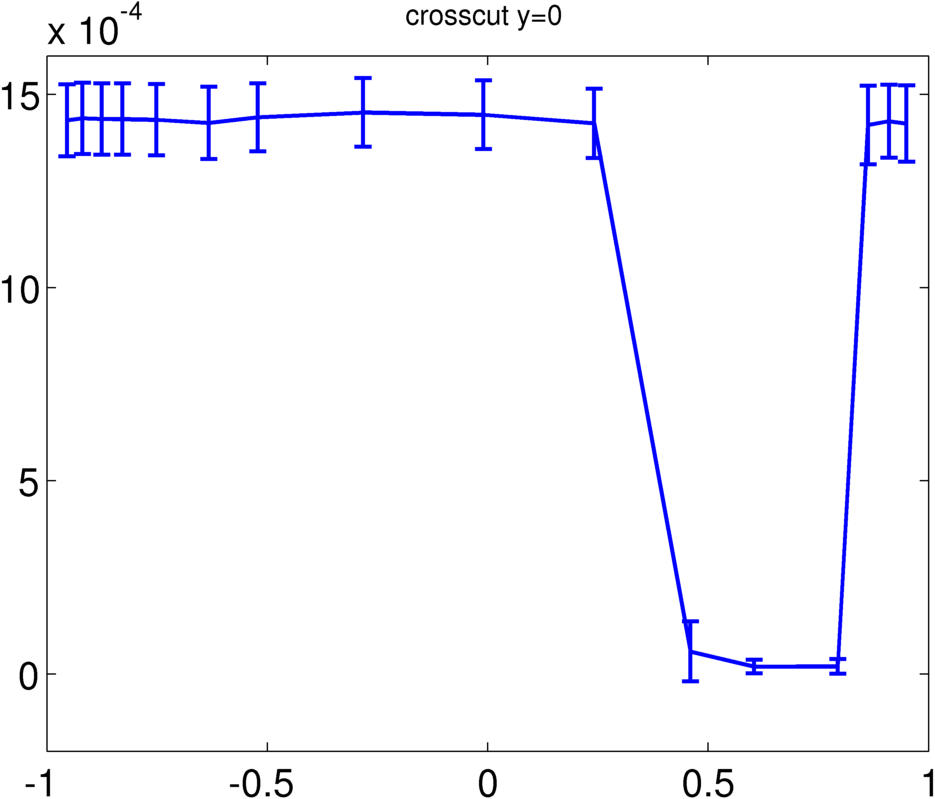} & \includegraphics[height=3cm]{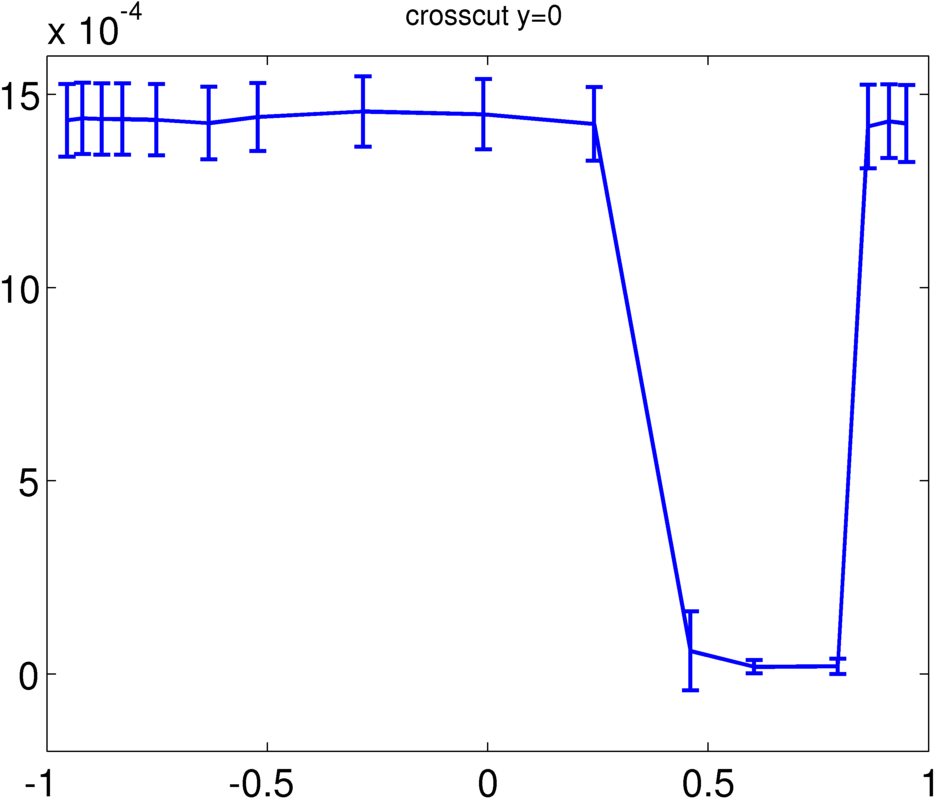}\\
        & EP  & MCMC
\end{tabular}
\caption{Numerical results for case 1, one plastic inclusion.}
\label{fig:exp3}
\end{figure}

\begin{figure}
\centering
\begin{tabular}{ccc}
\includegraphics[height=3cm]{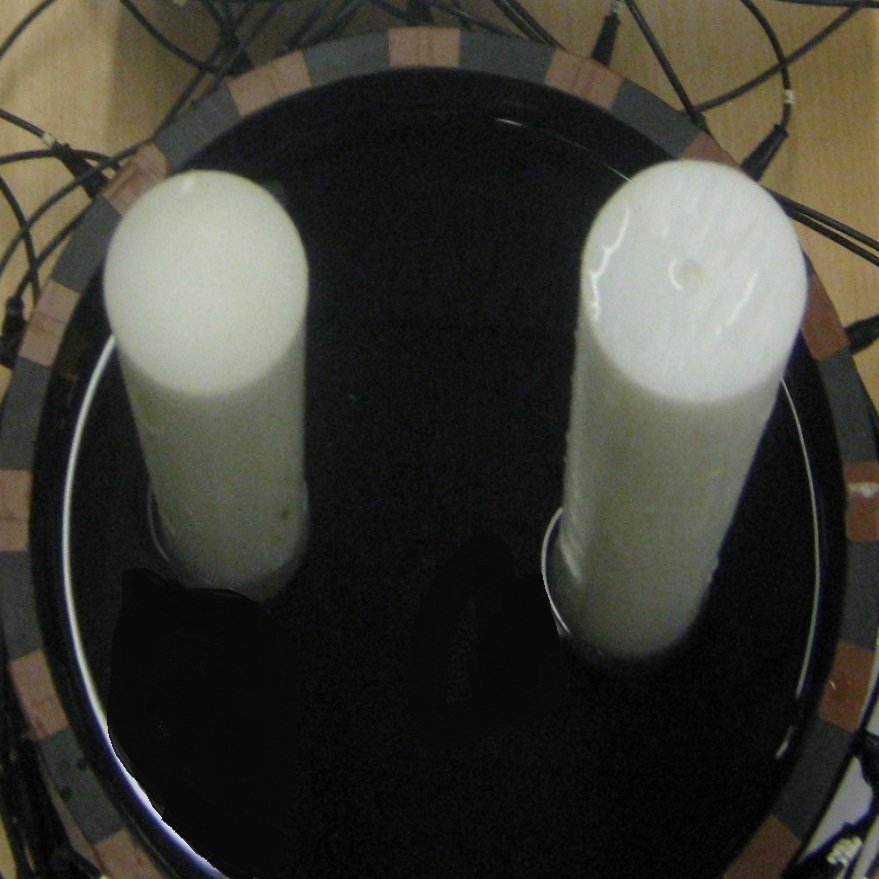} & \includegraphics[height=3cm]{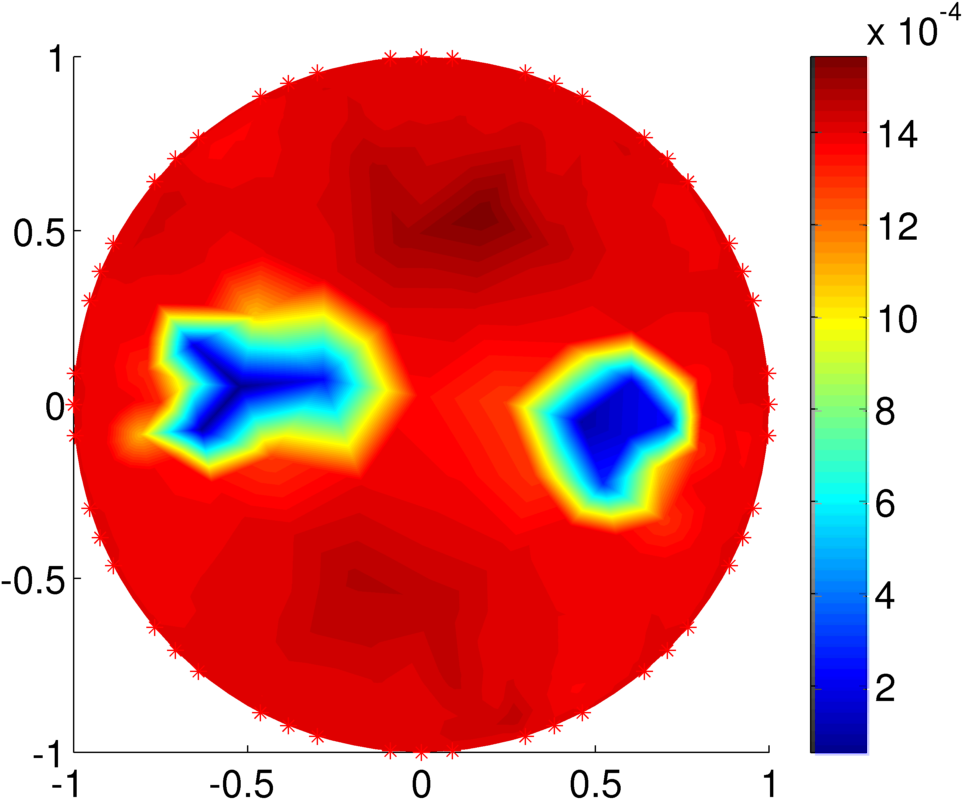} & \includegraphics[height=3cm]{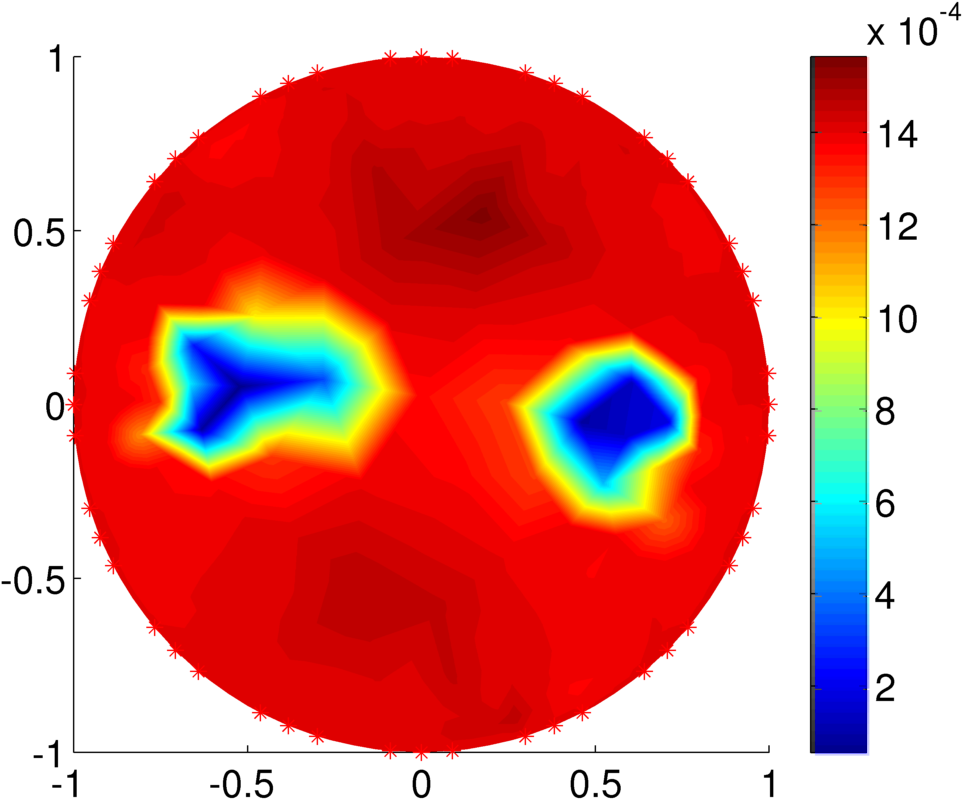}\\
         & \includegraphics[height=3cm]{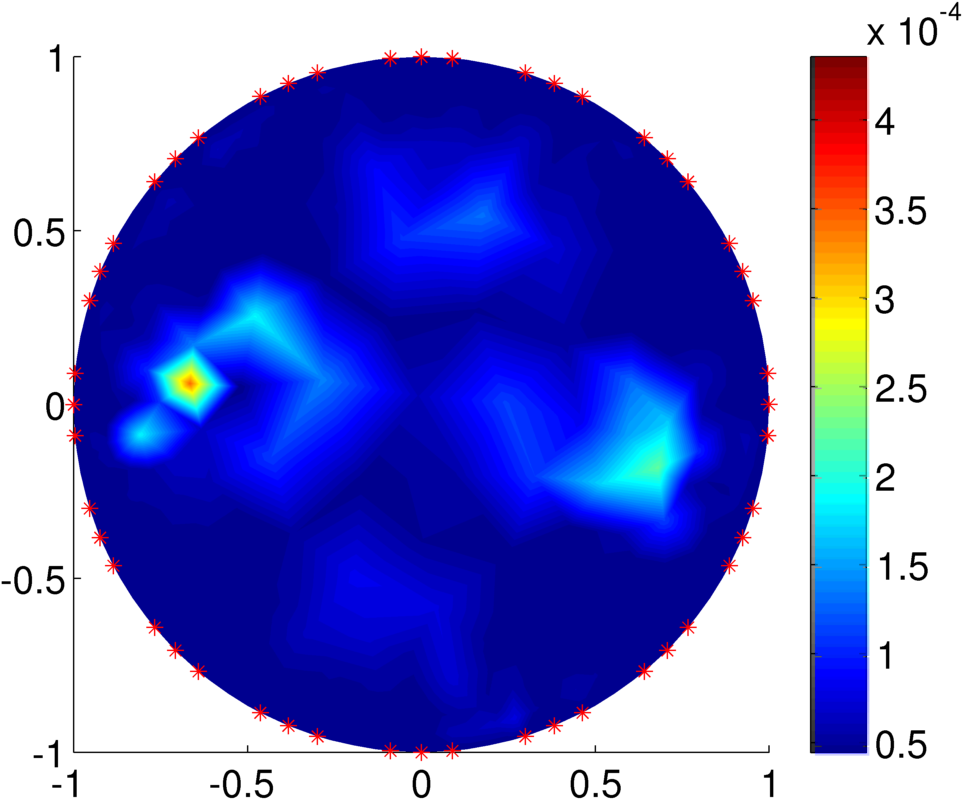} &  \includegraphics[height=3cm]{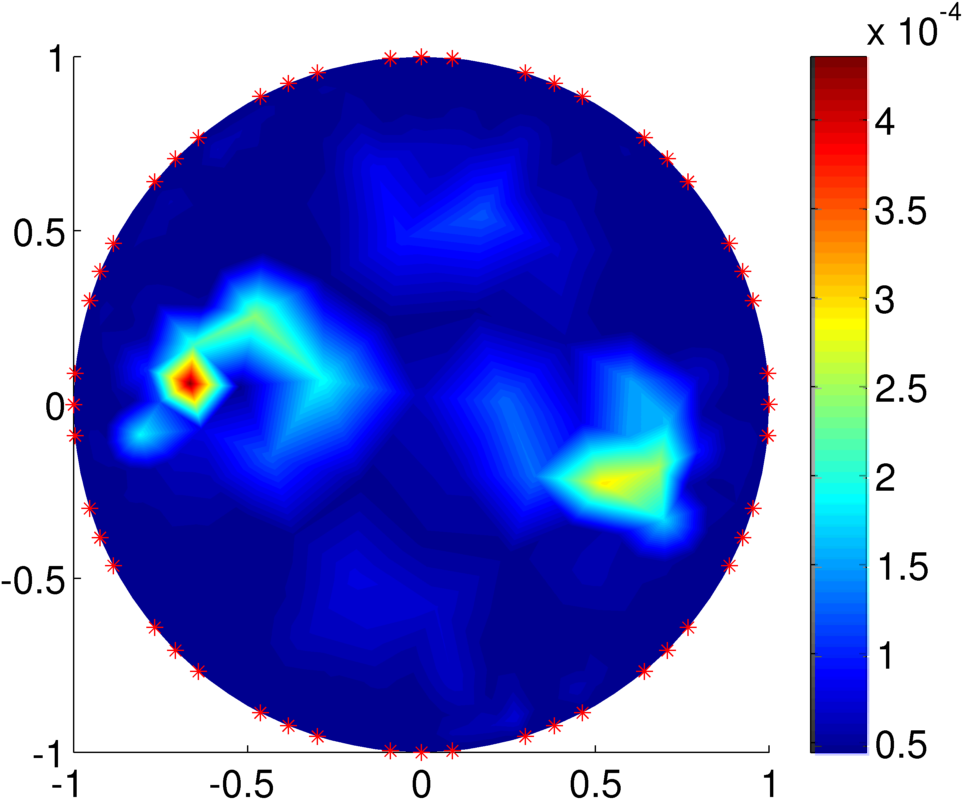}\\
         & \includegraphics[height=3cm]{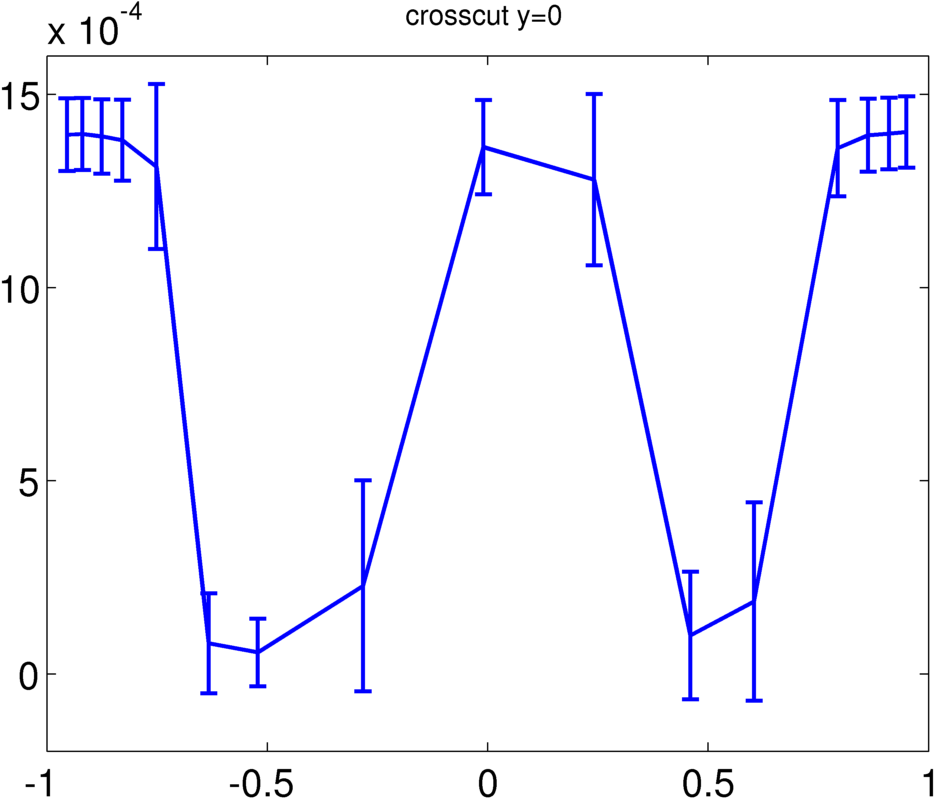} & \includegraphics[height=3cm]{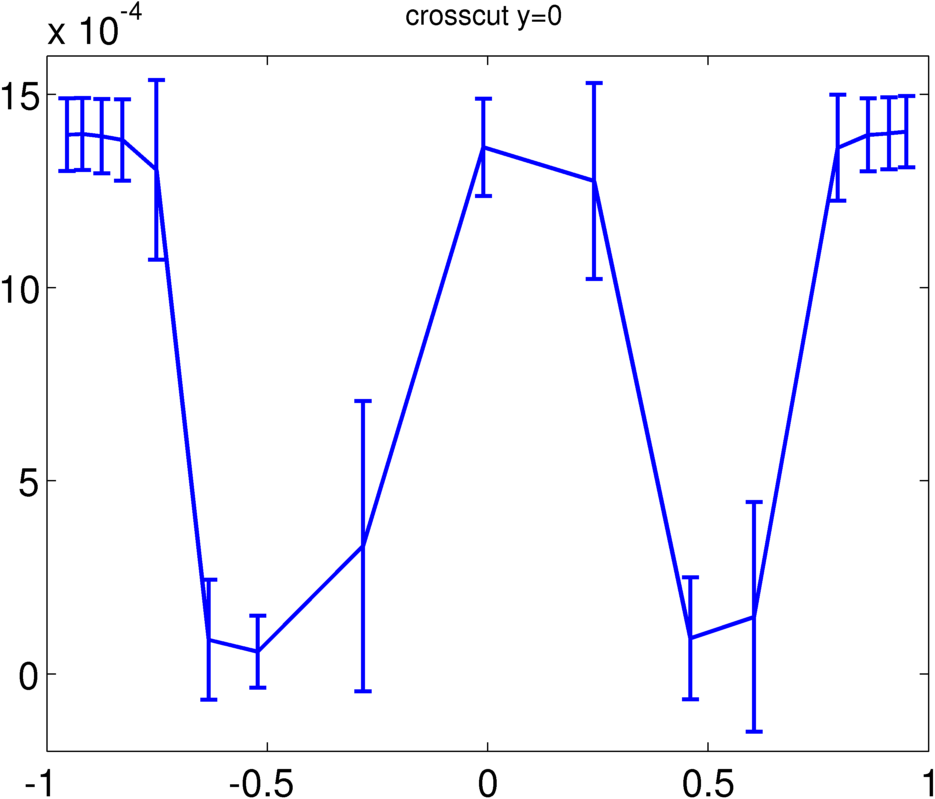}\\
         & EP & MCMC
\end{tabular}
\caption{Numerical results for case 2, two plastic bars.}
\label{fig:exp5}
\end{figure}

\begin{figure}
\centering
\begin{tabular}{ccc}
\includegraphics[height=3cm]{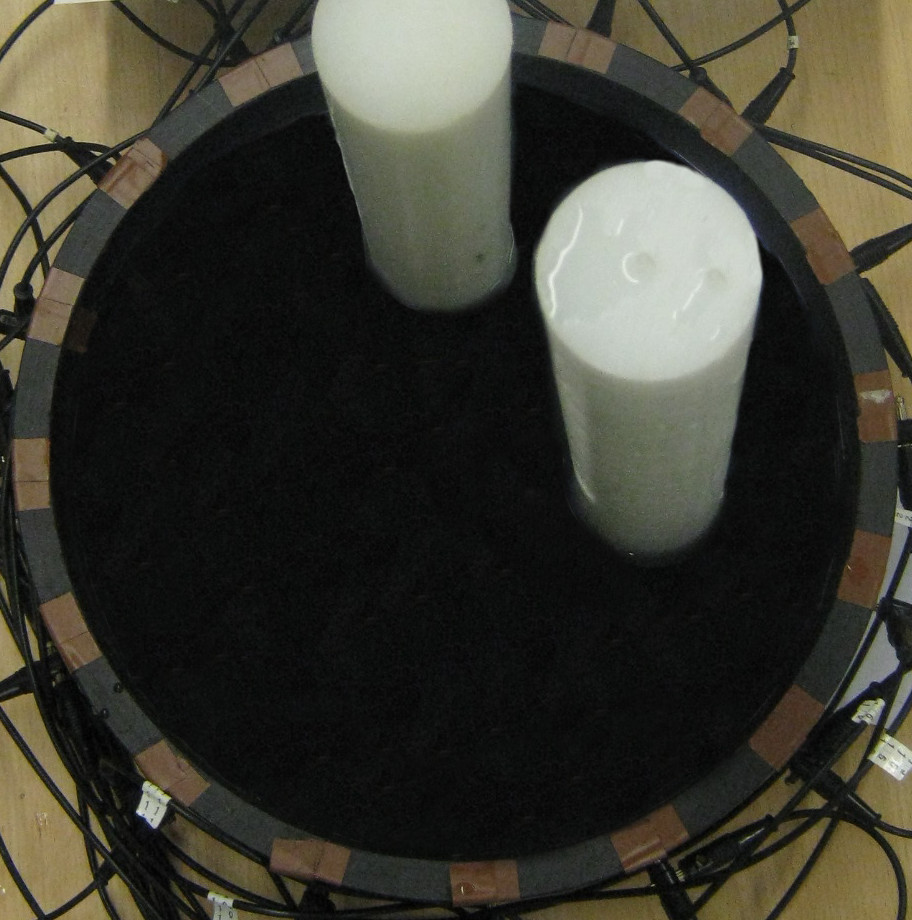} & \includegraphics[height=3cm]{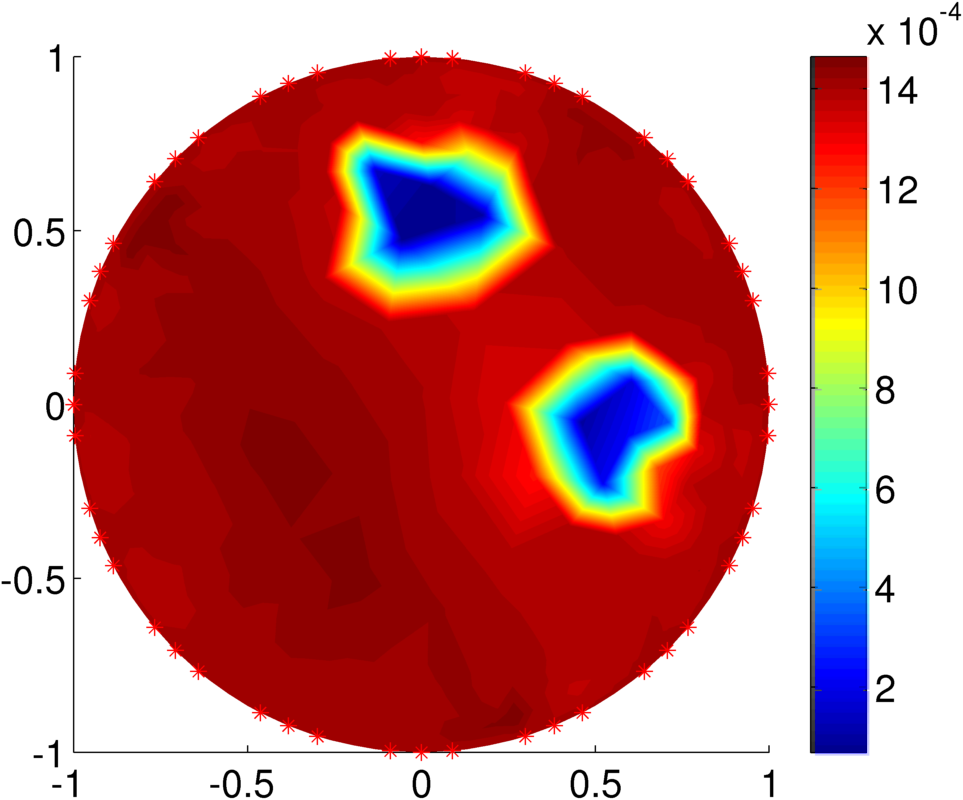} & \includegraphics[height=3cm]{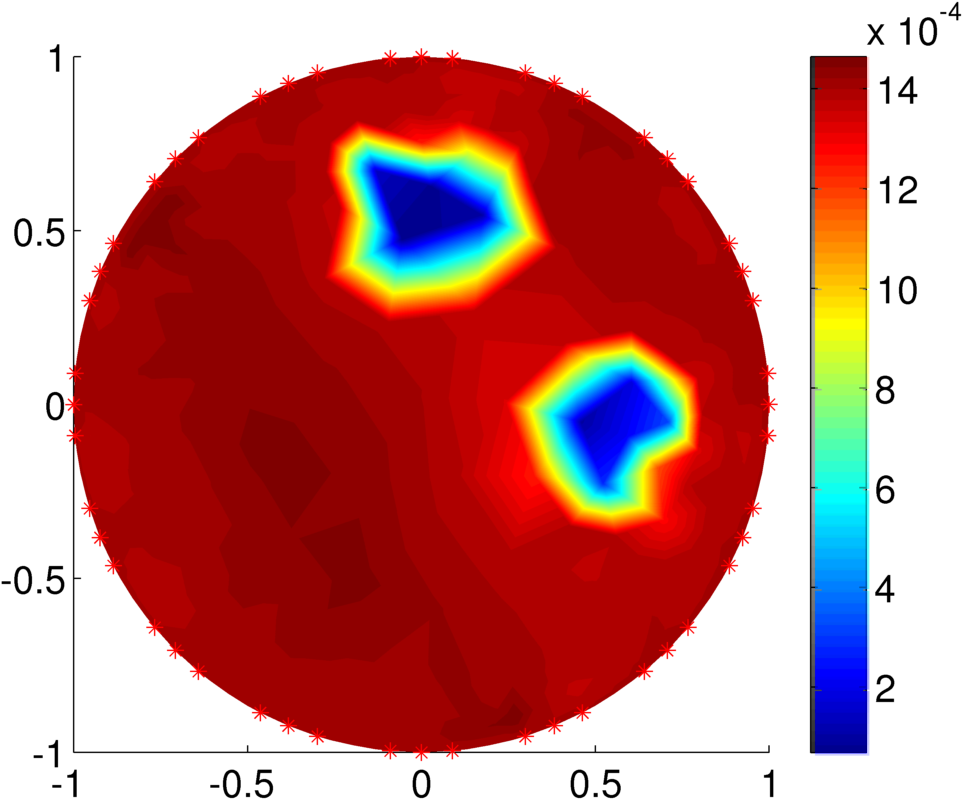}\\
       & \includegraphics[height=3cm]{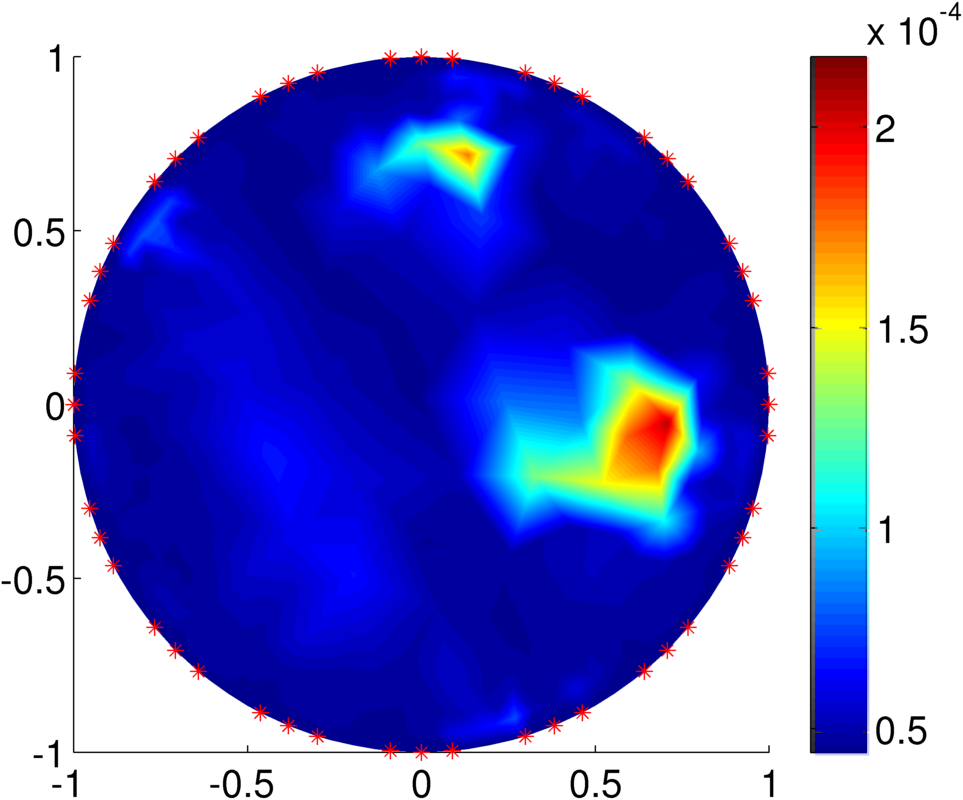}  & \includegraphics[height=3cm]{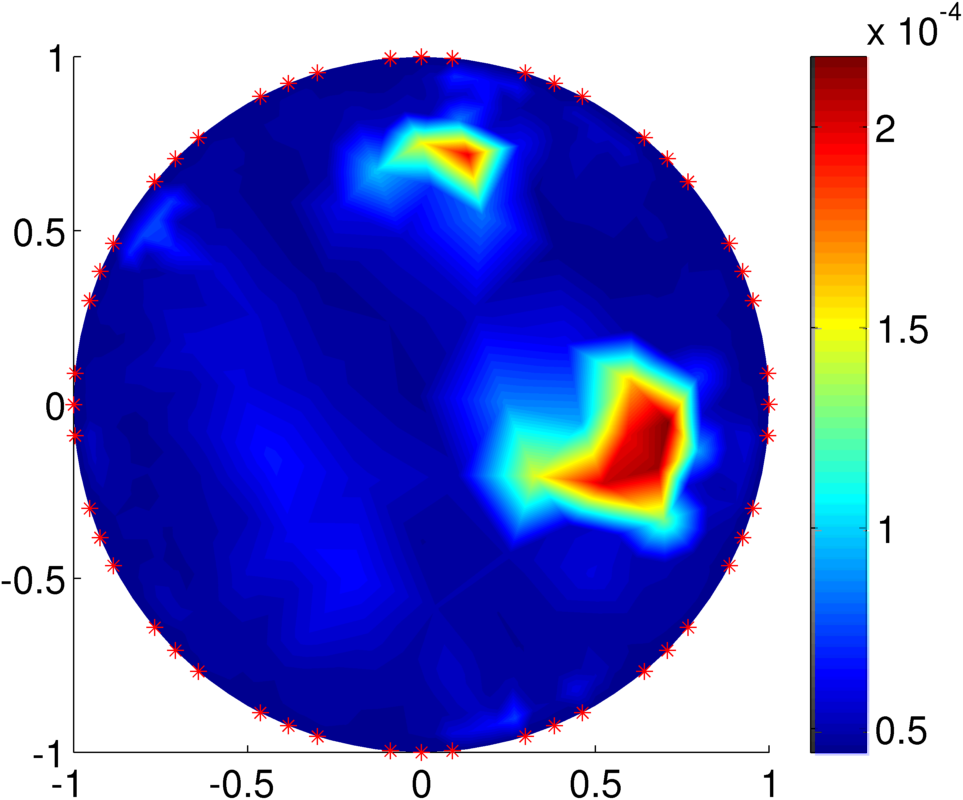}\\
       & \includegraphics[height=3cm]{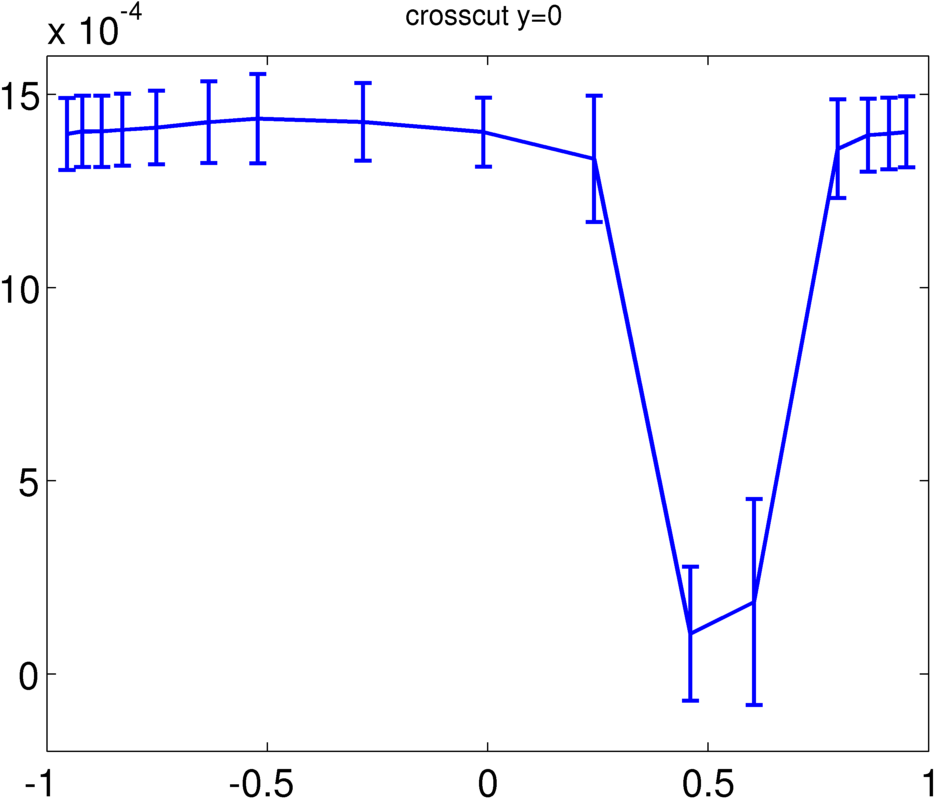} & \includegraphics[height=3cm]{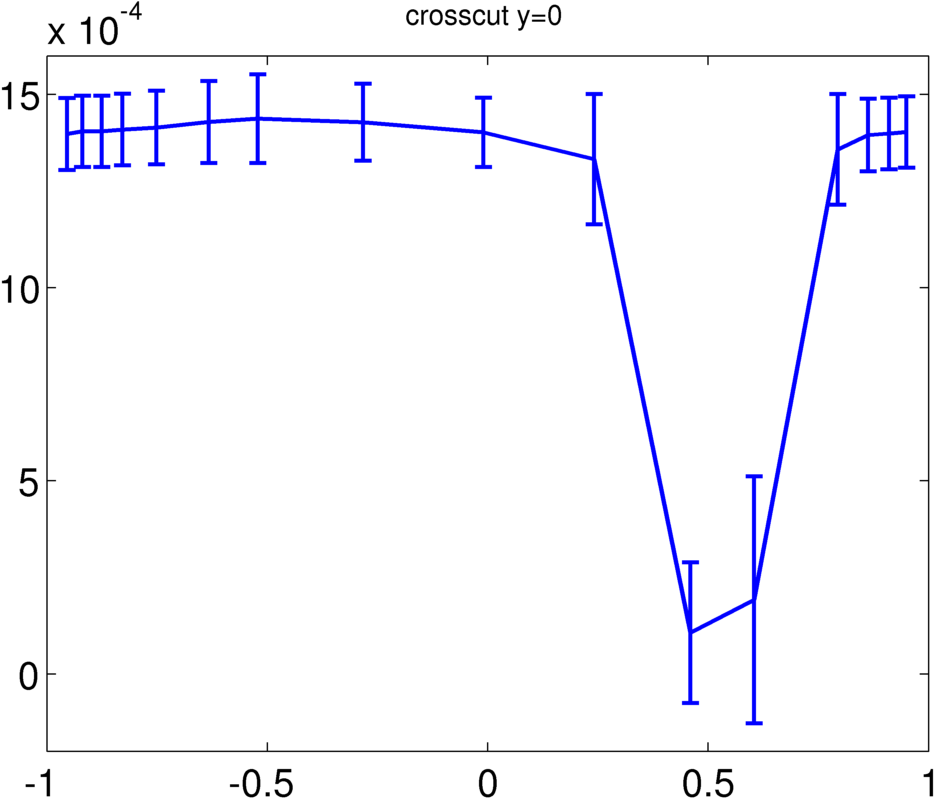}\\
       & EP & MCMC
\end{tabular}
\caption{Numerical results for case 3, two neighboring plastic bars.}
\label{fig:exp6}
\end{figure}

\begin{figure}
\centering
\begin{tabular}{ccc}
\includegraphics[height=3cm]{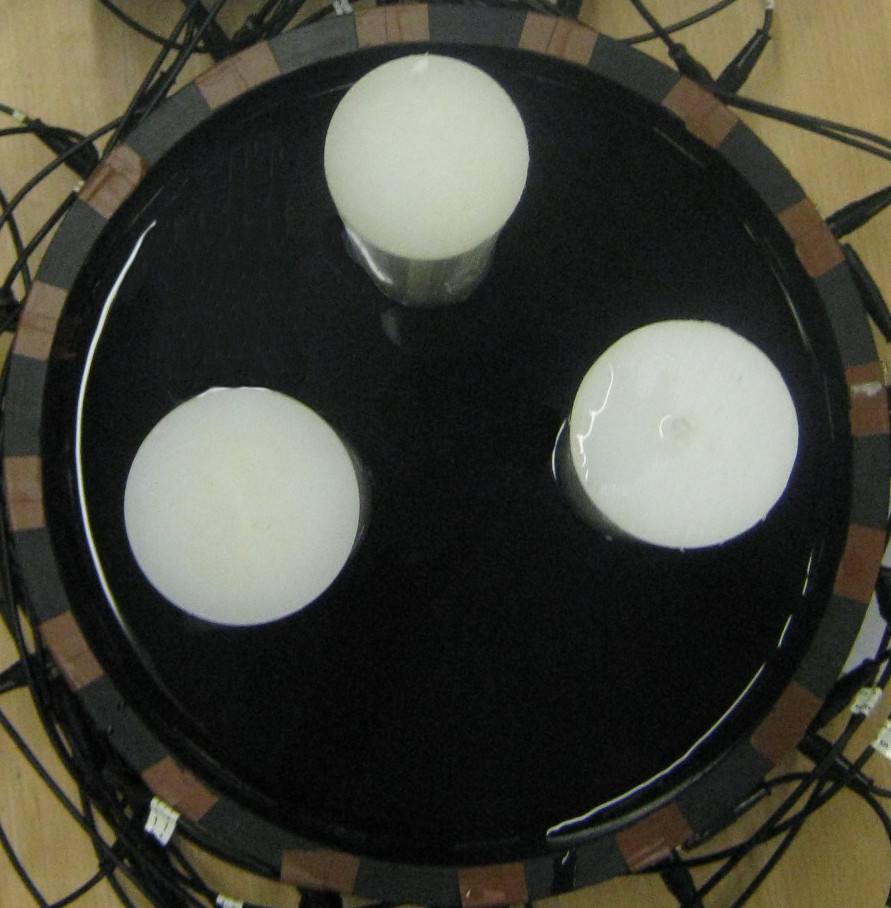} & \includegraphics[height=3cm]{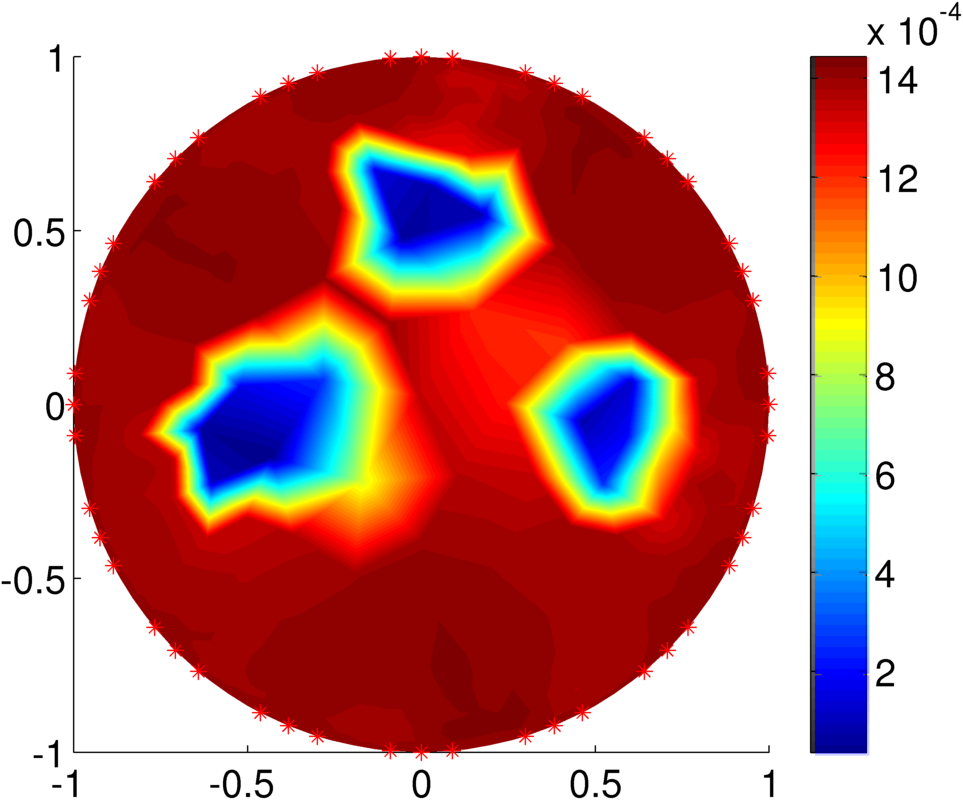} & \includegraphics[height=3cm]{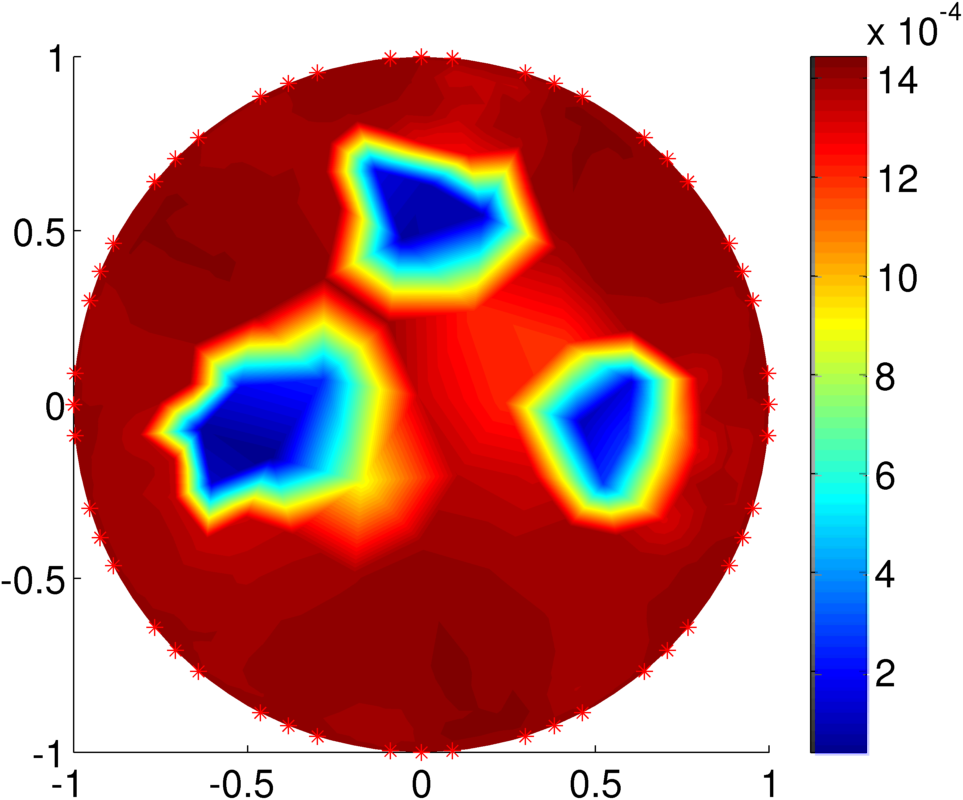}\\
           & \includegraphics[height=3cm]{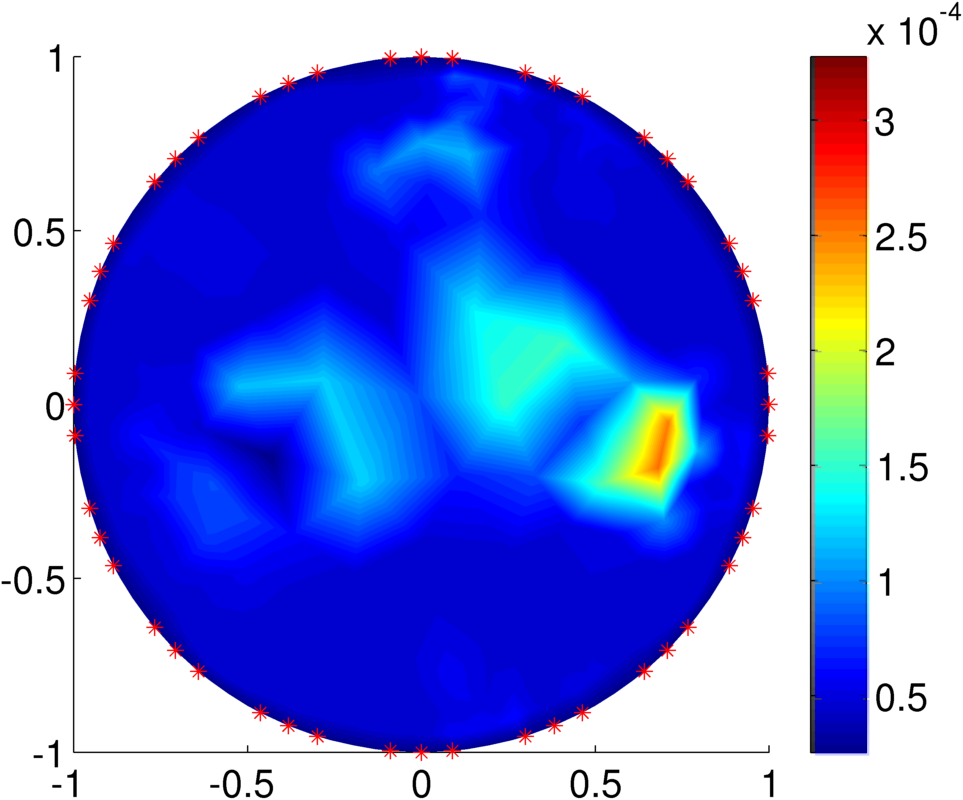} & \includegraphics[height=3cm]{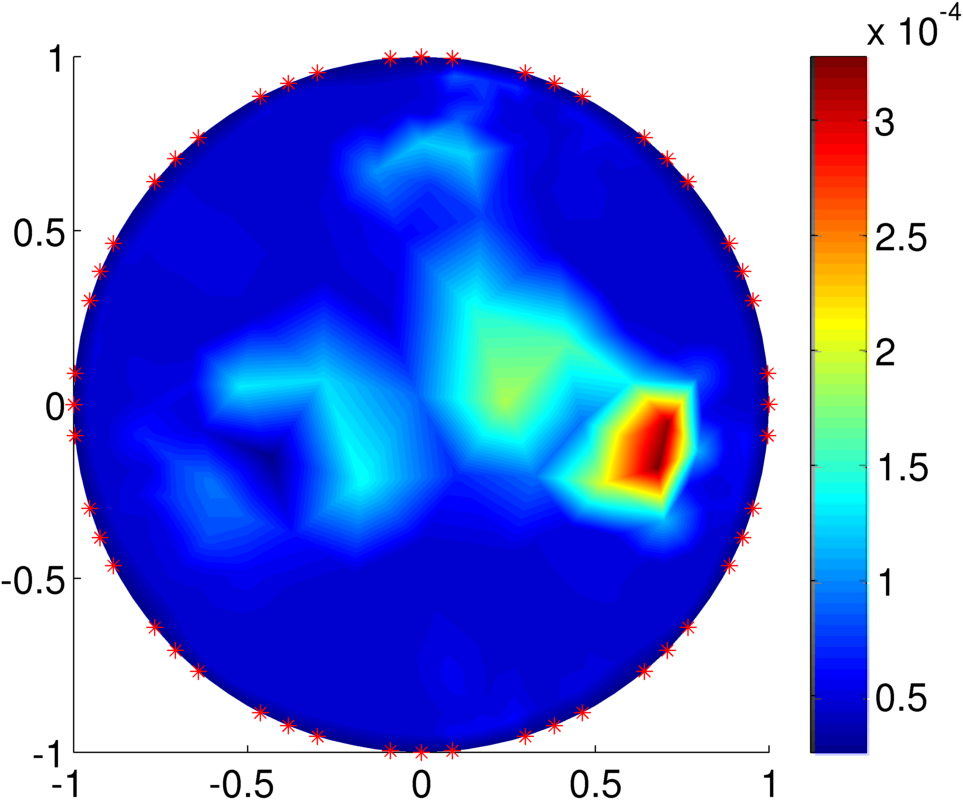}\\
           & \includegraphics[height=3cm]{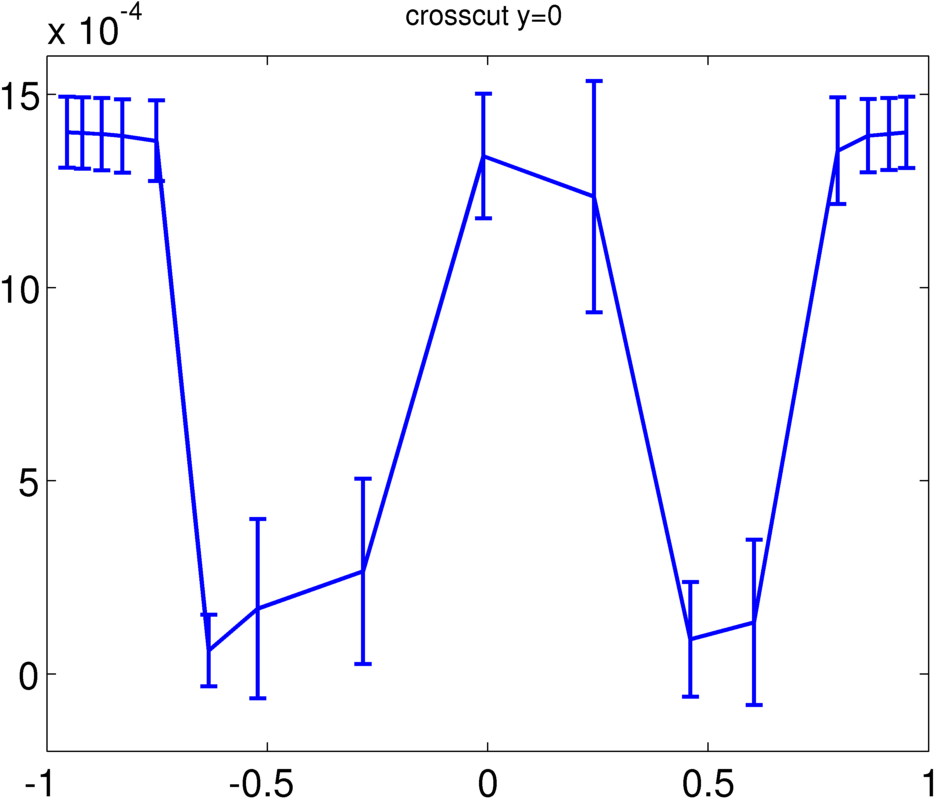} & \includegraphics[height=3cm]{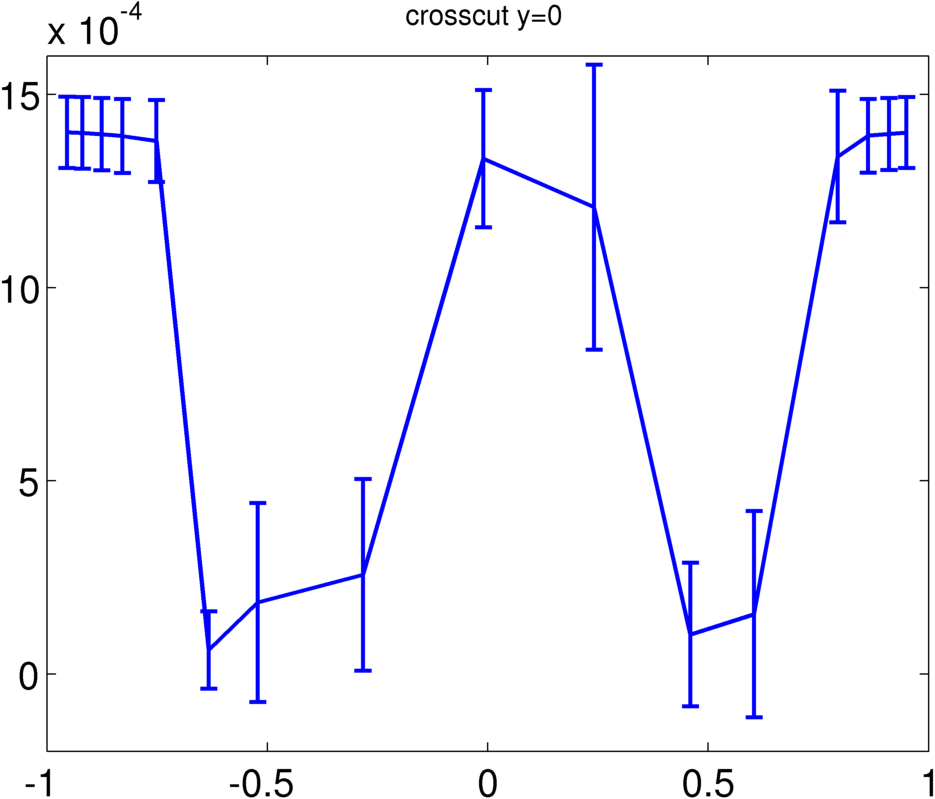}\\
           & EP & MCMC
\end{tabular}
\caption{Numerical results for case 4, three neighboring plastic inclusions.}
\label{fig:exp7}
\end{figure}

\begin{figure}
\centering
\begin{tabular}{ccc}
\includegraphics[height=3cm]{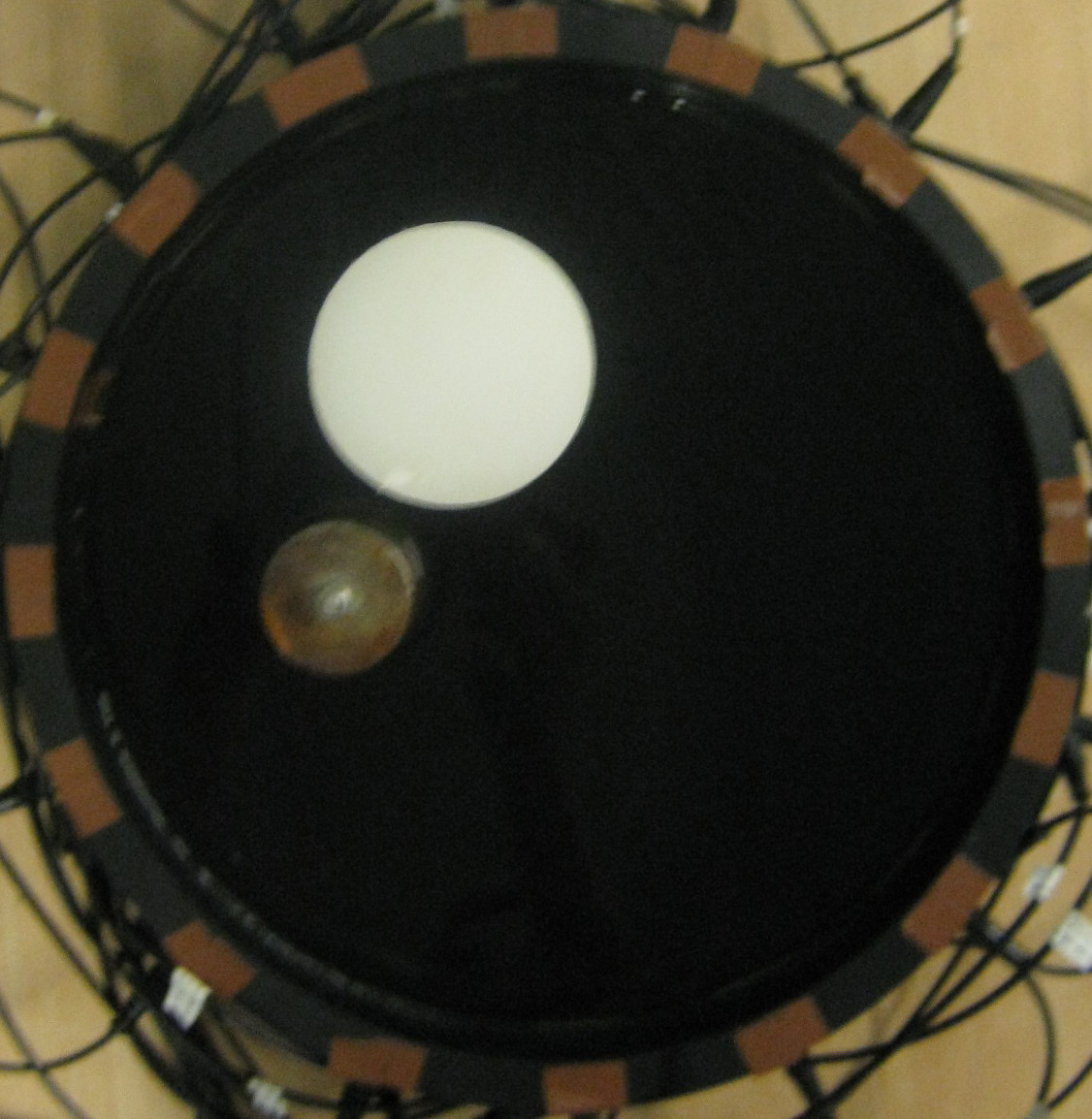}&\includegraphics[height=3cm]{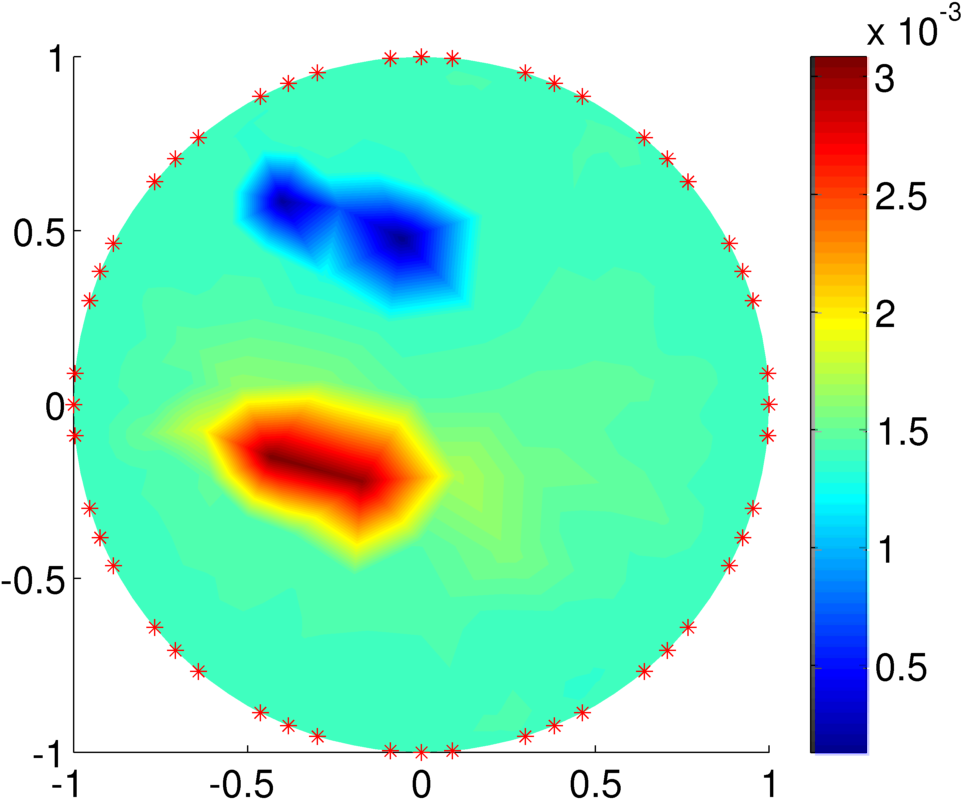} & \includegraphics[height=3cm]{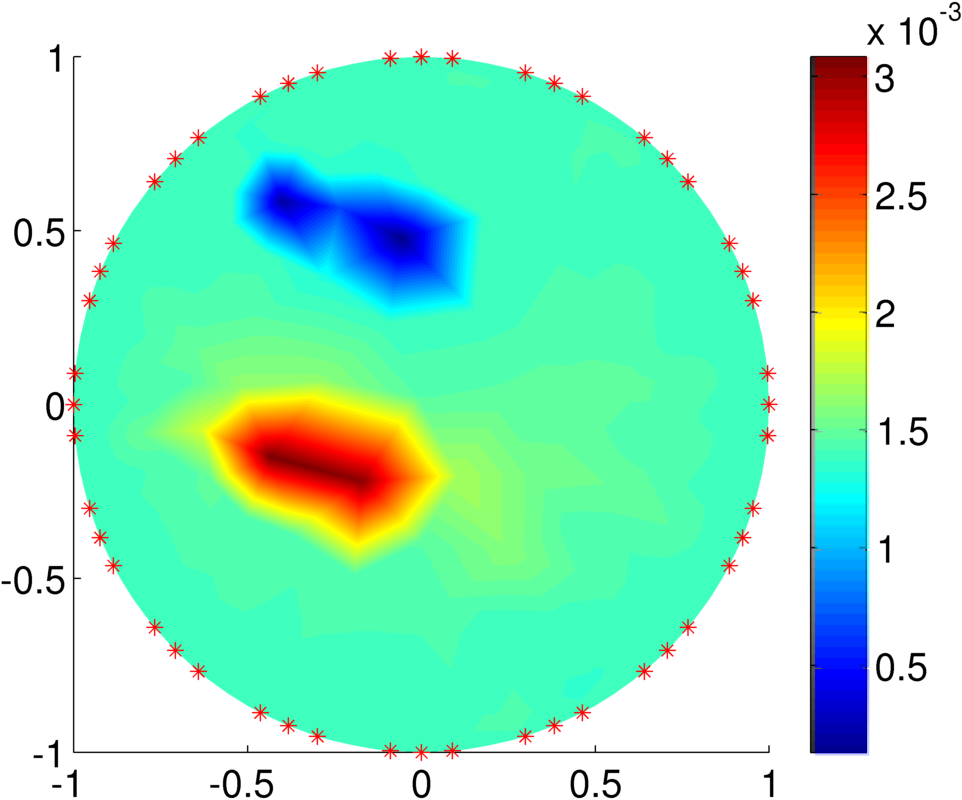}\\
    & \includegraphics[height=3cm]{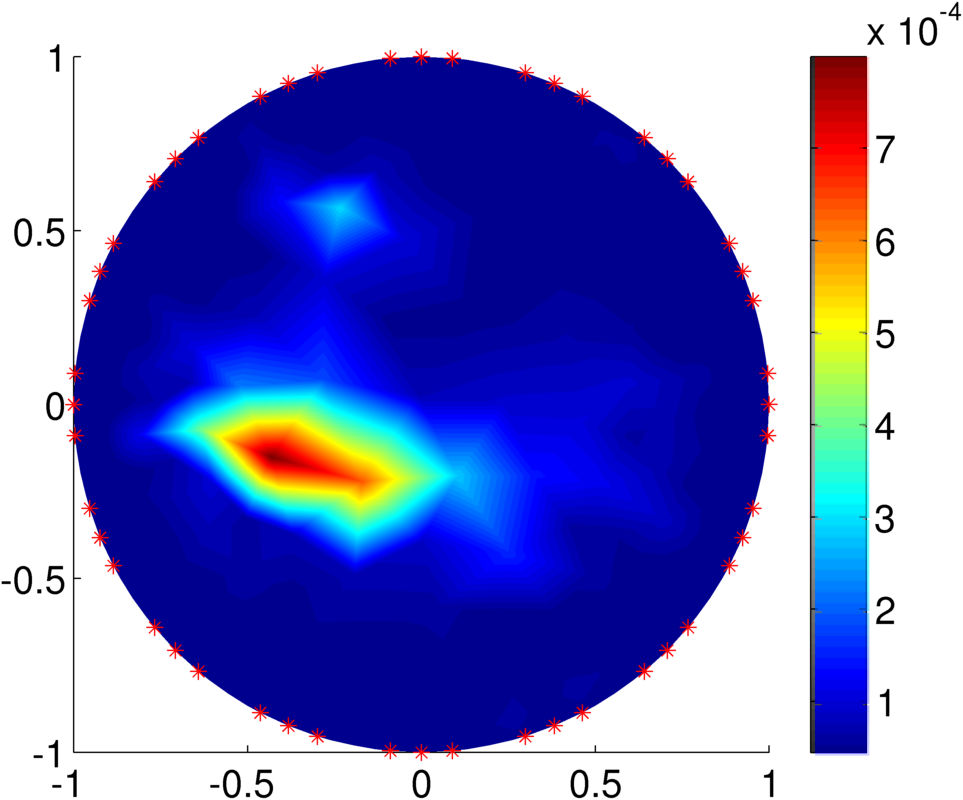} & \includegraphics[height=3cm]{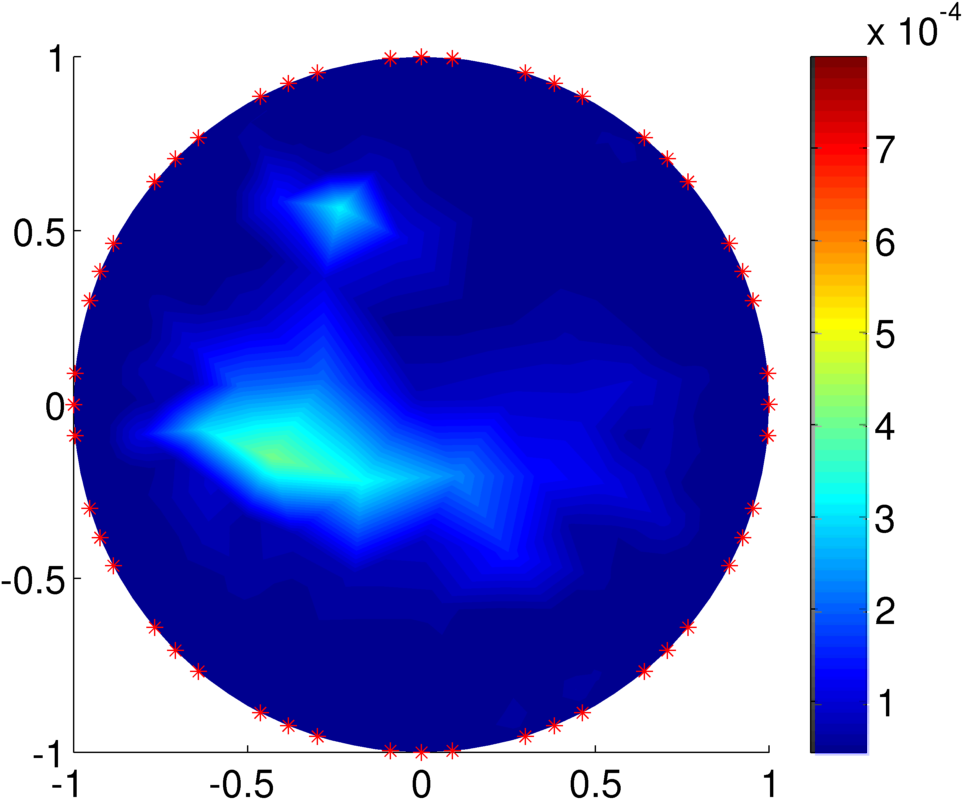}\\
    & \includegraphics[height=3cm]{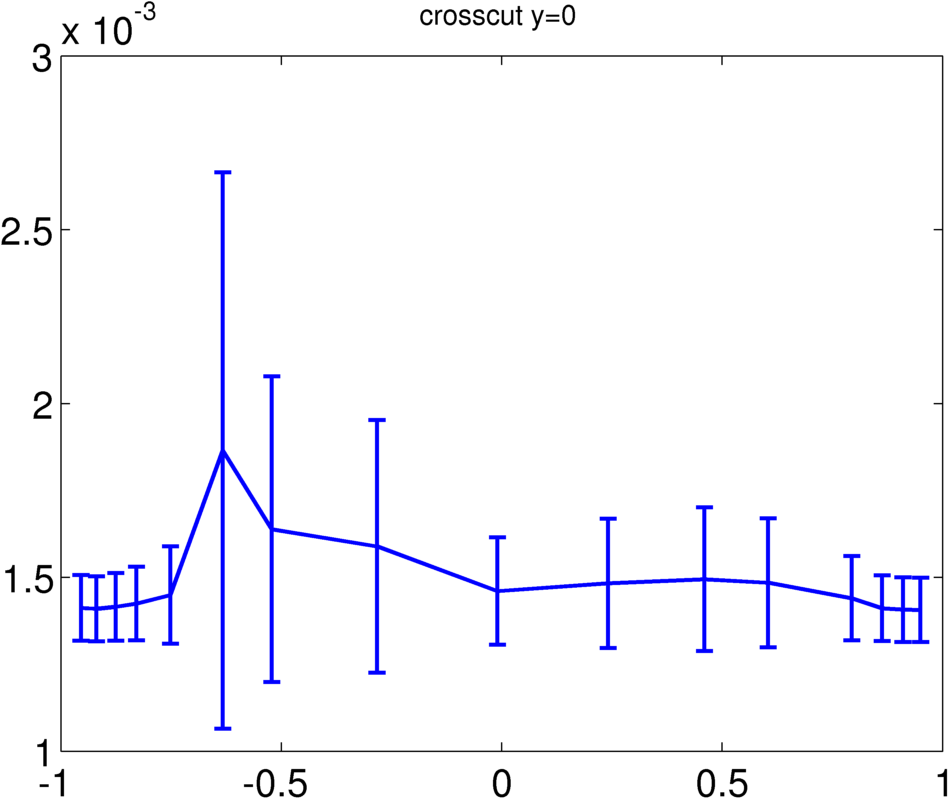} & \includegraphics[height=3cm]{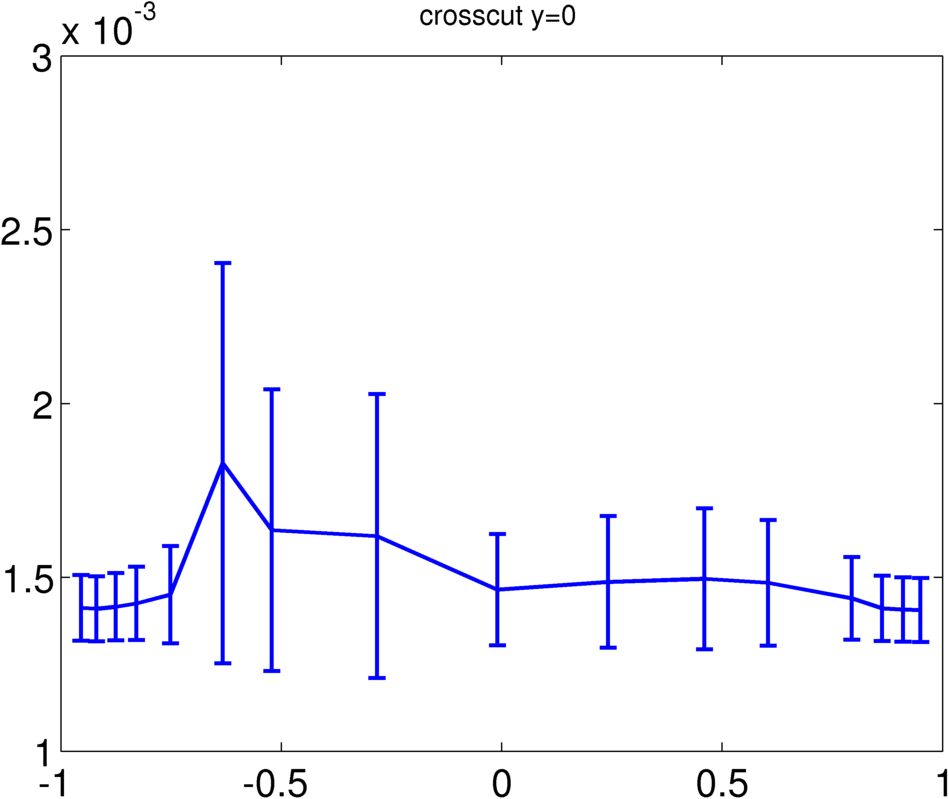}\\
    & EP  & MCMC
\end{tabular}
\caption{Numerical results for case 5, one plastic inclusion and one metallic inclusion.}
\label{fig:exp14}
\end{figure}

\begin{figure}
\centering
\begin{tabular}{ccc}
\includegraphics[height=3cm]{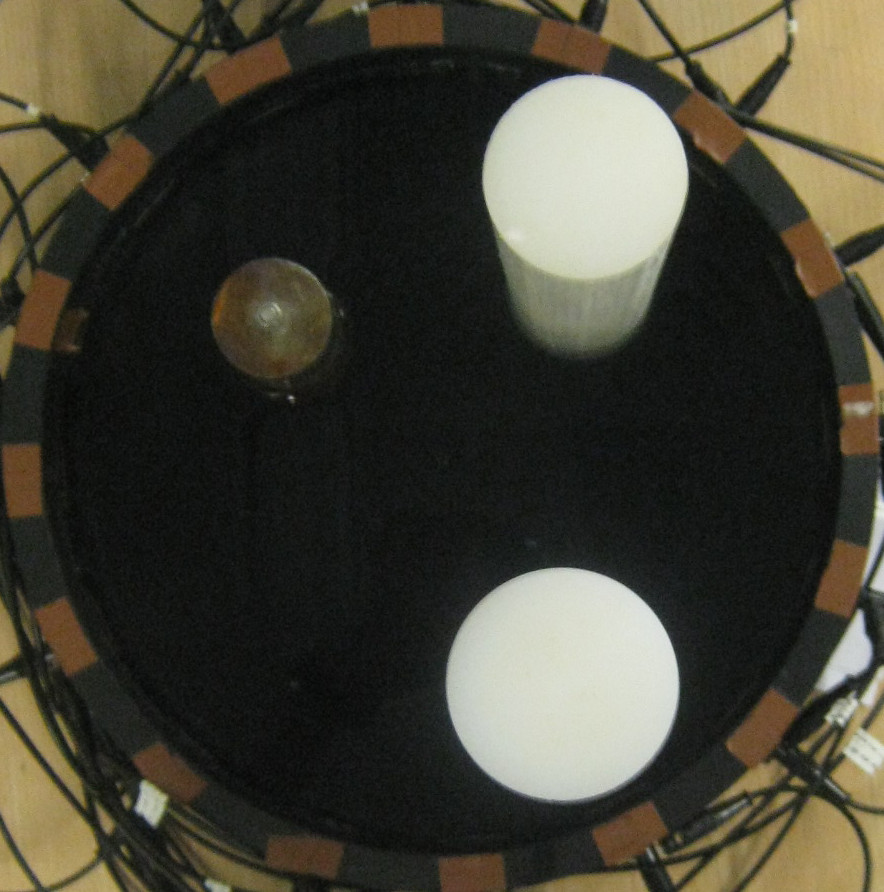}&\includegraphics[height=3cm]{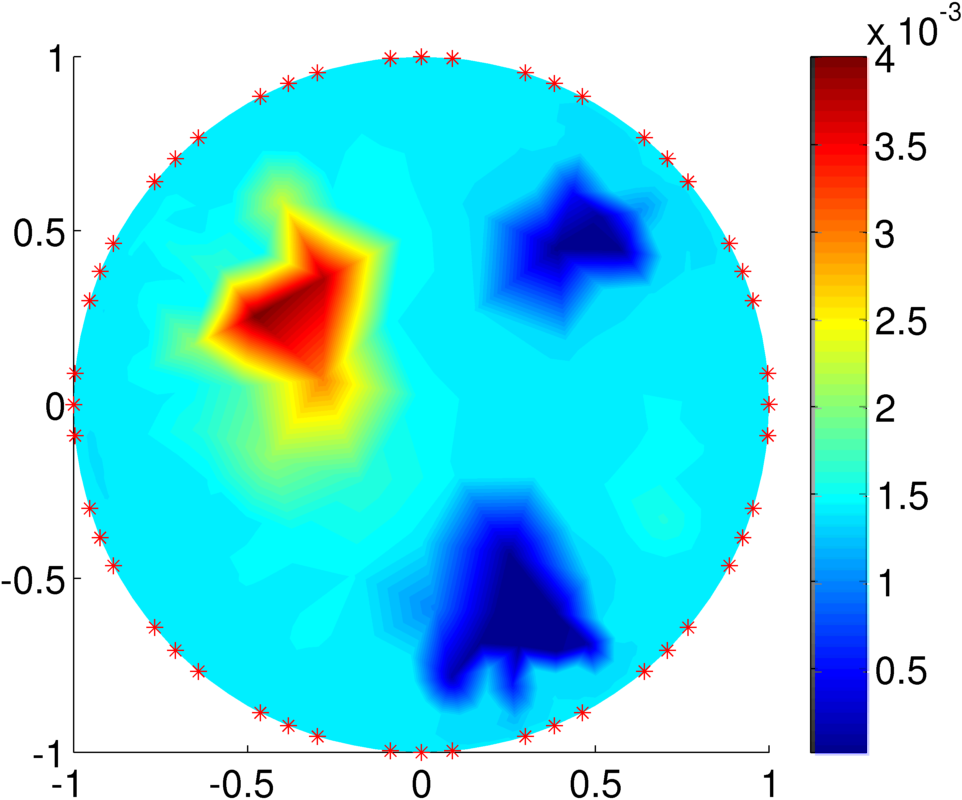} & \includegraphics[height=3cm]{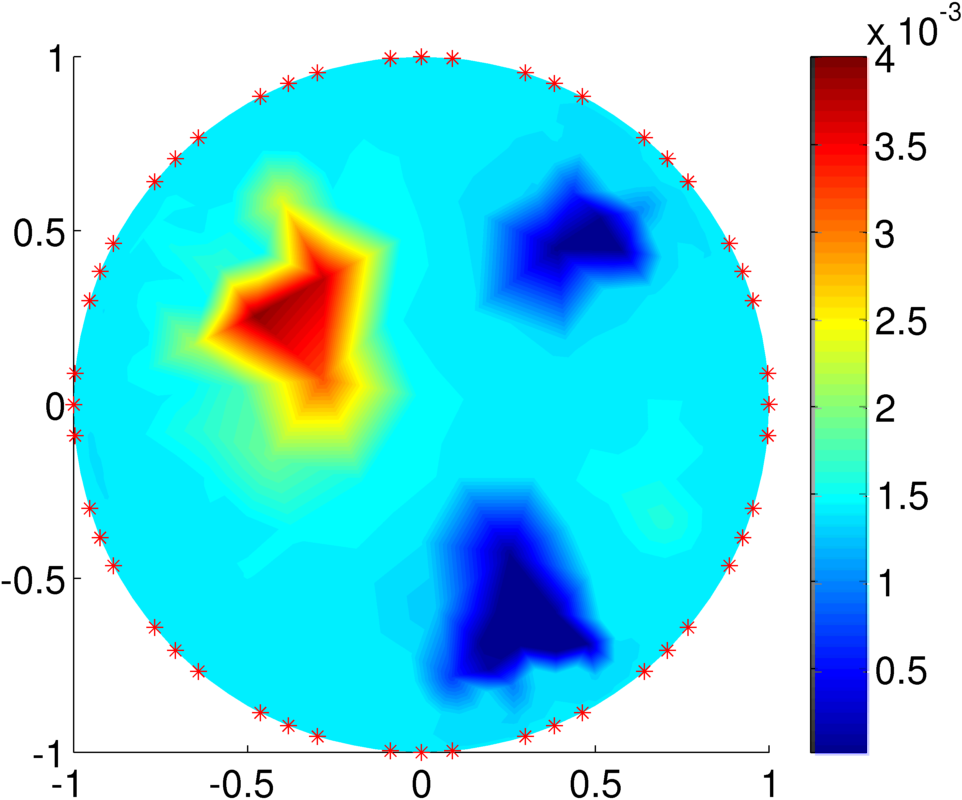}\\
      & \includegraphics[height=3cm]{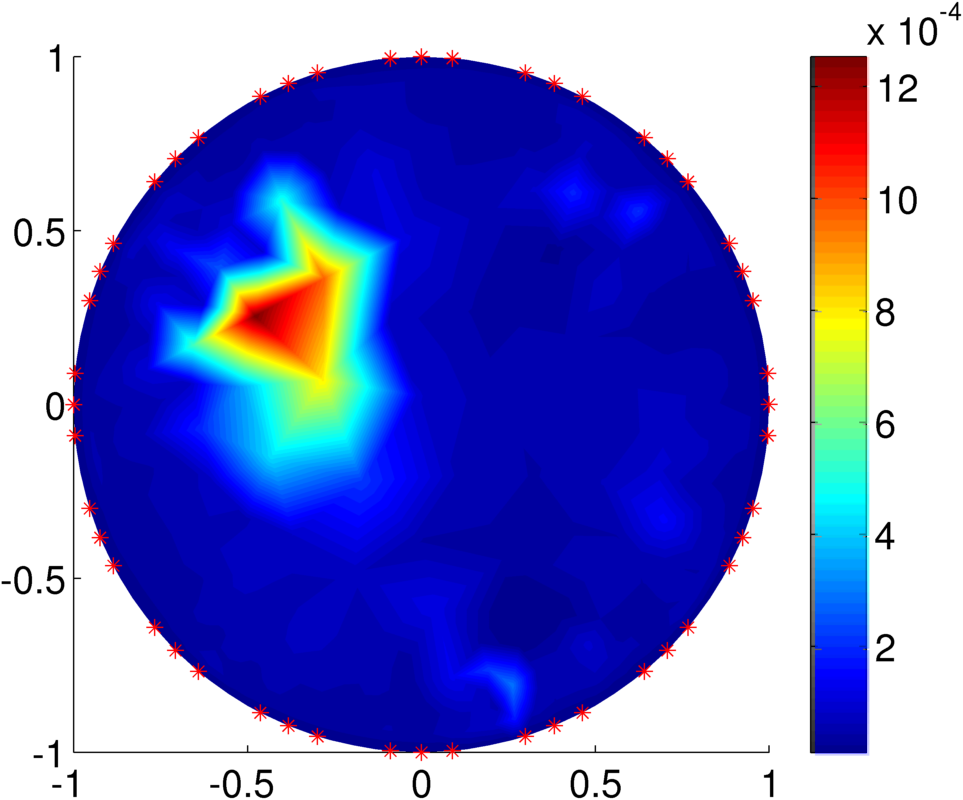} & \includegraphics[height=3cm]{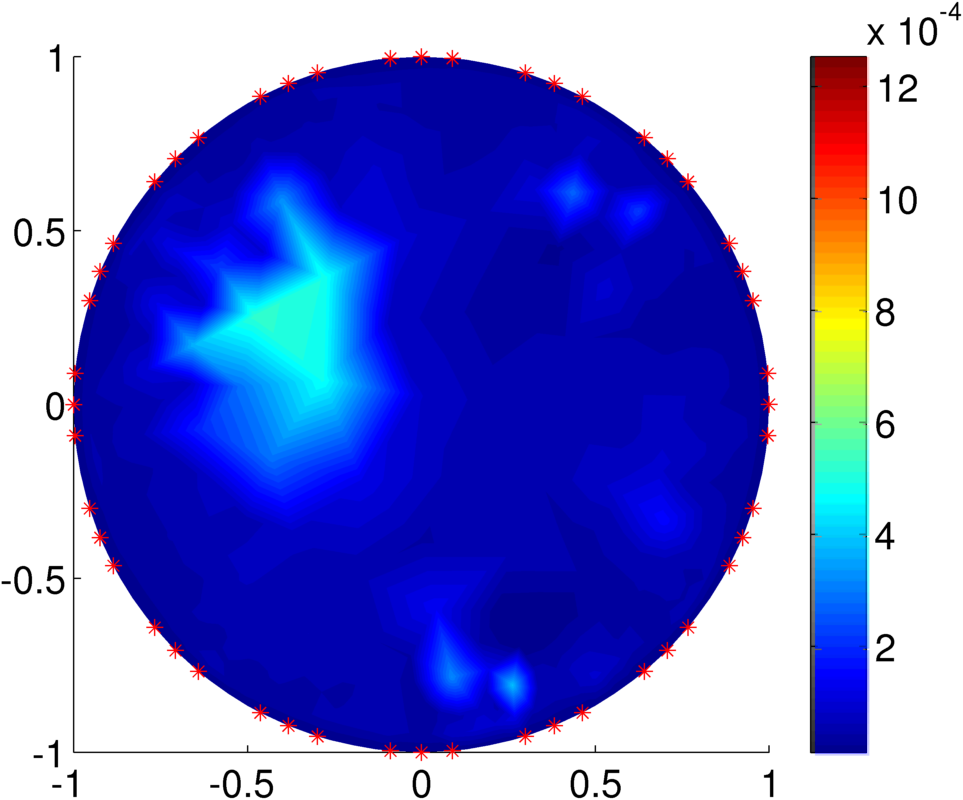}\\
       & \includegraphics[height=3cm]{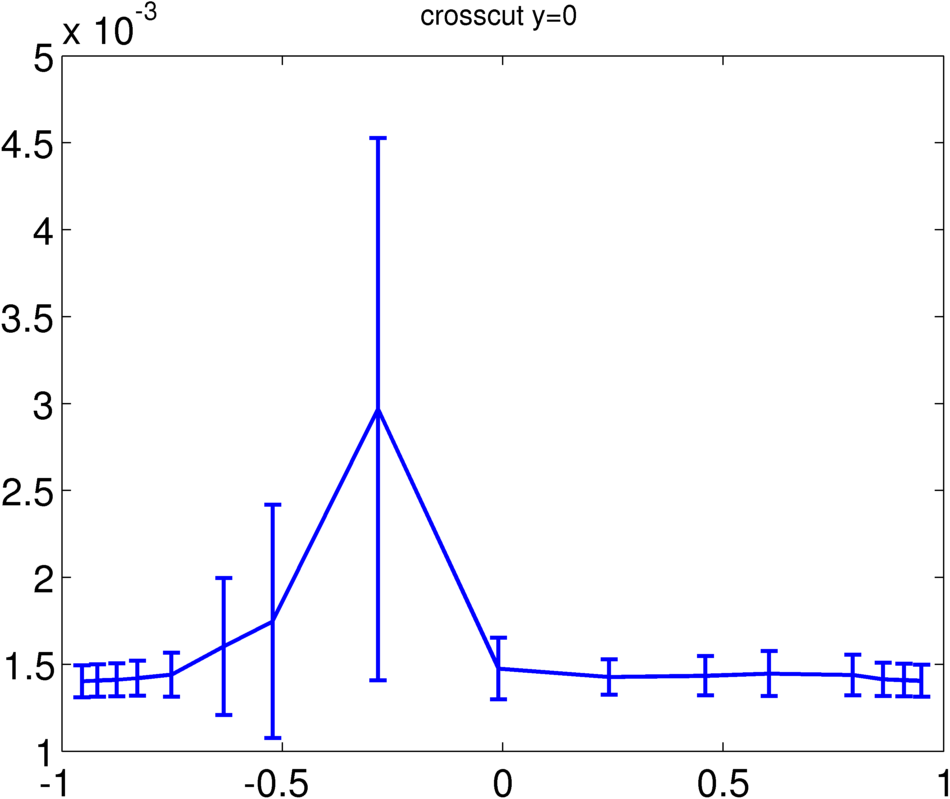} & \includegraphics[height=3cm]{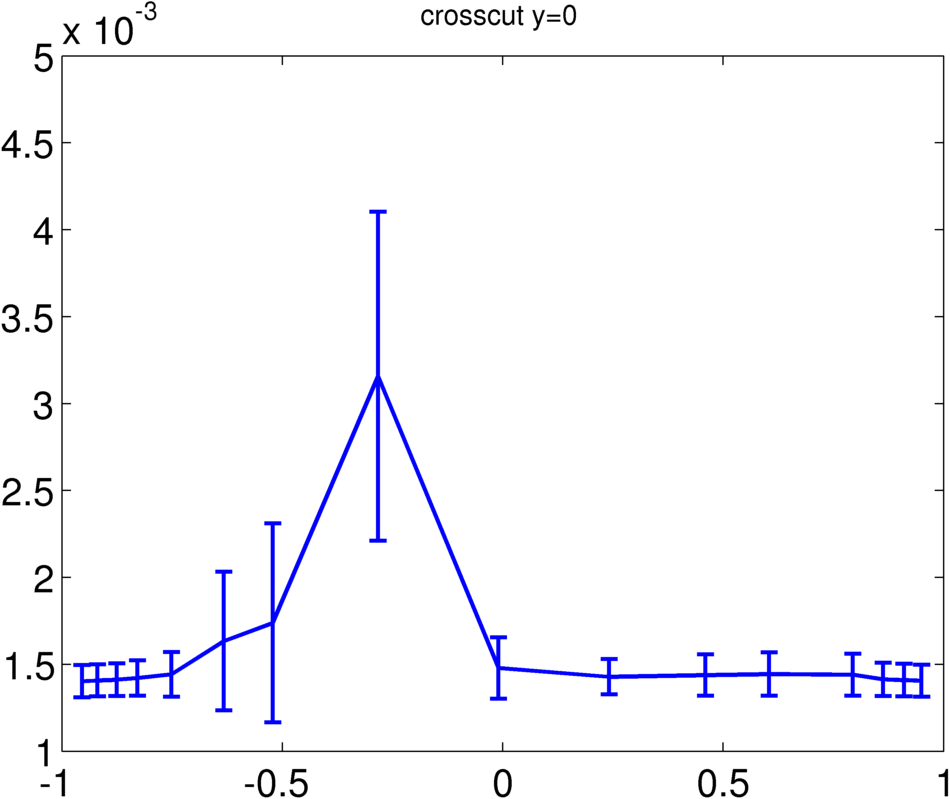}\\
      &  EP & MCMC
\end{tabular}
\caption{Numerical results for case 6, two plastic inclusions and one metallic inclusion.}
\label{fig:exp16}
\end{figure}

\section{Conclusions}
We presented a variational method for approximating the posterior probability distribution based on
expectation propagation. The algorithm is of iterative nature, with each iteration matching the mean and
covariances, by minimizing the Kullback-Leibler divergence. Some basic properties (convergence and
stability) of the algorithm have been discussed. The algorithm is explicitly illustrated on a special
class of posterior distributions which are of projection type. Numerically, we demonstrated the applicability
to electrical impedance tomography with the complete electrode model on real experiments with a water tank
immersed with plastic/metallic bars, under sparsity constraints. The numerical experiments indicate
that compared with Markov chain Monte Carlo methods, the expectation propagation algorithm is computationally
very efficient, and reasonably accurate.

Even though we showcase only the electrical impedance tomography, expectation propagation clearly can be
applied to a wide variety of other nonlinear inverse problems, e.g., optical tomography and inverse
scattering. In addition, there are several avenues for further research. First, it is important to extend
the algorithm to other important prior distributions (e.g., total variation and nonlog-concave
sparsity-promoting priors) and likelihood functions (e.g., Laplace/$t$ distributions for robust
formulations). We note that these extensions are nontrivial for parameter identifications. For example,
for the total variation prior, one complicating factor is the presence of box constraints, whose treatment
is not straightforward; whereas the absence of log concavity poses severe numerical stability issues.
Second, we have described some basic properties
of the EP algorithm. However, many fundamental properties, e.g., its global and local convergence
properties, and stability with respect to data perturbation, remains elusive. Third, the computational
efficiency of the EP algorithm requires careful implementation, e.g., computing the diagonal elements of
the inverse of large (and possibly dense) inverse covariance matrix and accurate numerical quadrature of
low-dimensional but nonsmooth integrals. This necessitates the study of relevant numerical issues, e.g., semi-analytic formulas
and error estimates.

\section*{Acknowledgements}
The authors would like to thank the referees for their helpful comments, and thank Dr. Aku Sepp\"{a}nen
of University of Eastern Finland for kindly providing the experimental data. The work was initiated
during MG's visit at Texas A\&M University, supported partially by
a scholarship by Deutscher Akademischer Austausch Dienst (DAAD) and a visitor programme of
Institute for Applied Mathematics and Computational Science, Texas A\&M University.
The work of BJ was partially supported by NSF Grant DMS-1319052.

\appendix
\section{Proof of Proposition \ref{lem:convergence_blockdiagonal}}\label{sec:app:proof}
\begin{proof}
We recast the posterior $p(x)$ into the formulation \eqref{eq:epwithproj} for EP with projection
\begin{equation*}
  p(x) = \prod t_i(U_i x),
\end{equation*}
where the matrices $U_i$ consist of disjoint rows of the identity matrix $I_n \in \R^{n \times n}$.
Upon relabeling the variables, we may assume $ [U_1^t\, \ldots\, U_k^t] = I_n. $
Then the Gaussian approximations $\{\tt_i\}$ take the form
\begin{equation*}
  \tt_i(x) = G(x,U_i^th_i,U_i^tK_iU_i)\sim e^{-\onehalf x^t U_i^t K_i U_i x + h_iU_ix},
\end{equation*}
where $K_i \in \R^{r\times r}$ and $h_i \in \R^r$, with $r$ being the cardinality
of $x_i$. The Gaussian approximation $\tilde{p}(x)=G(x,h, K)$ to $p(x)$ is given by
the product of factor approximations, i.e., $h= \sum_i U_i^th_i$ and $K = \sum_i
U_i^tK_iU_i$. By the construction of $U_i$, the matrix $K$ is block diagonal with
blocks $K_i$. 
Further, to update the $i$-th factor, steps 6 and 8 of Algorithm \ref{alg:epproj} yield
$\widehat{C}_i^{-1} = (U_iK^{-1}U_i^t)^{-1}-K_i  = 0 $ and hence
\begin{equation*}
   K_i = \mathrm{Var}_{Z^{-1}t_i(s_i)N(s_i;\widehat{\mu}_i,\widehat{C}_i)}[s_i]^{-1} -
   \widehat{C}_i^{-1} = \mathrm{Var}_{Z^{-1}t_i(s_i)}[s_i]^{-1}.
\end{equation*}
Thus the update $K_i$ depends only on $t_i$, but not on the current approximation,
from which the one-sweep convergence follows directly.
\end{proof}

\section{Auxiliary lemmas}\label{sec:app:lemmas}
Below, we recall some results on exponential family \cite{Brown:1986}. We shall
only need their specialization to normal distributions, but
we follow the general framework for two reasons:
it is the usual form found in the literature, and it
allows direct generalization from normal distributions to other exponential families.
\begin{definition}
An exponential family is a set $\mathcal{F}$ of distributions with density of the form
\begin{align*}
  p(x|\theta) &=  e^{\theta^t \phi(x) - \Phi(\theta)},\\
  \Phi(\theta) &= \log \int e^{\theta^t\phi(x)} d\nu(x),
\end{align*}
for natural parameter $\theta$ from the natural parameter space $\Theta$.
The exponential family is fully characterized by the sufficient statistics
$\phi$ and the base measure $d\nu$. The natural parameter space $\Theta$
is a convex set of all natural parameters such that $p(x|\theta)$ is
a valid distribution.
\end{definition}

The log partition function $\Phi$ allows computing the mean and variance of $\phi(x)$:
\begin{align}\label{eq:expfam_cumulants}
	\nabla_\theta \Phi = E_{p(x|\theta)}[\phi(x)]\quad \mbox{and}\quad
	\nabla^2_\theta \Phi = \mathrm{Var}_{p(x|\theta)}[\phi(x)].
\end{align}
Let $f(x)$ be a positive function, and $\log E_{p(x|\theta)} [f(x)] + \Phi(\theta)$
exists for every $\theta\in\Theta$. Then it defines a new exponential family with
the base measure $f(x)d\nu(x)$, with the log partition function $\Phi_f(\theta)$
given by $\log E_{p(x|\theta)} [f(x)] + \Phi(\theta)$.

We now consider the exponential family of normal distributions. We denote a normal distribution
with mean $\mu$ and covariance $C$ by $N(x;\mu,C) = (2\pi)^{-\frac{n}{2}} (\det{C})^{-\frac{1}{2}}
e^{-\frac{1}{2} (x-\mu)^tC^{-1}(x-\mu)}$. Hence, the sufficient statistics $\phi(x)$ and the natural
parameter $\theta$ are given by $\phi(x) = (x, xx^t)$, and $\theta =: (h,K) = (C^{-1}\mu,-\frac{1}{2}
C^{-1})$, respectively. This together with \eqref{eq:expfam_cumulants} and the chain rule yields
\begin{align*}
	E_{N(x;\mu,C)}[x] = C \nabla_\mu \Phi(\theta(\mu,C)) \quad\mbox{and}\quad
	\mathrm{Var}_{N(x;\mu,C)}[x] = C \nabla_\mu^2 \Phi(\theta(\mu,C)) C.
\end{align*}

Now we can state a result on a tilted normal distribution $f(x)N(x,\mu,C)$.
\begin{lem}\label{cor:gaussian:tilted_moments}
Let $N(x;\mu,C)$ be a normal distribution and $f$ a positive function. Then
the mean $\mu_f$ and covariance $C_f$ of the tilted distribution
$Z^{-1}f(x)N(x;\mu,C)$ $($$Z=\int f(x)N(x;\mu,C$$)$ are given by
\begin{align*}
	\mu_f &= C \left(\nabla_\mu\log E_{N(x;\mu,C)} [f(x)]\right) + \mu,\\
	C_f &= C \left(\nabla^2_\mu\log E_{N(x;\mu,C)} [f(x)]\right)C + C.
\end{align*}
\end{lem}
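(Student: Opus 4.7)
My plan is to leverage the exponential-family formalism built up in the lines immediately preceding the lemma. The key observation is that the tilted distribution $Z^{-1} f(x) N(x;\mu,C)$ is itself a member of an exponential family — namely, the one obtained from the Gaussian family by replacing the base measure $d\nu$ with $f(x)\,d\nu(x)$. As noted just before the statement, its log partition function is
\[
 \Phi_f(\theta) \;=\; \log E_{N(x;\mu,C)}[f(x)] \;+\; \Phi(\theta),
\]
where $\theta = (h,K) = (C^{-1}\mu, -\tfrac{1}{2}C^{-1})$ is the natural parameter and $\Phi$ is the log partition function of the Gaussian family, with sufficient statistic $\phi(x) = (x, xx^t)$.

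First I would apply the cumulant identities \eqref{eq:expfam_cumulants} to the tilted family: $\nabla_\theta \Phi_f = E_{\text{tilted}}[\phi(x)]$ and $\nabla^2_\theta \Phi_f = \mathrm{Var}_{\text{tilted}}[\phi(x)]$. Then I would convert these derivatives in the natural parameter $\theta$ into derivatives in $\mu$ at $C$ held fixed. Since $\theta(\mu,C)$ depends on $\mu$ only through $h = C^{-1}\mu$ with Jacobian $C^{-1}$, the chain rule applied to the tilted family exactly mirrors the derivation already recorded in the excerpt for the plain Gaussian family, yielding
\[
   E_{\text{tilted}}[x] \;=\; C\,\nabla_\mu \Phi_f, \qquad \mathrm{Var}_{\text{tilted}}[x] \;=\; C\,\nabla^2_\mu \Phi_f \; C .
\]

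Finally, I would substitute $\Phi_f = \log E_{N(x;\mu,C)}[f(x)] + \Phi$ into these two identities. Linearity of the gradient splits each right-hand side into two pieces; the pieces involving $\Phi$ alone are $C\nabla_\mu \Phi = E_{N(x;\mu,C)}[x] = \mu$ and $C\nabla^2_\mu \Phi \; C = \mathrm{Var}_{N(x;\mu,C)}[x] = C$, by the Gaussian identities recalled just above the lemma. The remaining pieces produce the claimed formulas for $\mu_f$ and $C_f$.

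The step requiring the most care is the chain-rule conversion from $\theta$-derivatives to $\mu$-derivatives: one must track the transpose conventions and the product/composition structure so that, for the Hessian, $\nabla^2_\mu \Phi_f$ indeed receives the sandwich $C(\cdot)C$ rather than a one-sided factor. This bookkeeping is however entirely parallel to that already carried out in the excerpt for the plain Gaussian, so no new ingredient beyond the exponential-family cumulant formulas is needed.
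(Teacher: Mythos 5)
Your proposal is correct and follows essentially the same route as the paper's own proof: identify the tilted distribution as an exponential family with log partition function $\Phi_f = \log E_{N(x;\mu,C)}[f(x)] + \Phi$, apply the cumulant identities \eqref{eq:expfam_cumulants}, and convert $h$-derivatives to $\mu$-derivatives by the chain rule with Jacobian $C^{-1}$ before splitting off the Gaussian pieces $\nabla_h\Phi=\mu$ and $\nabla_h^2\Phi=C$. The only cosmetic difference is the order in which you perform the split and the coordinate change, which is immaterial.
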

\begin{proof}
Let $\Phi$ and $\Phi_f$ be the log partition function of the normal distribution
$p(x|\theta)=N(x;\mu,C)$ and the tilted distribution $\tilde{p}(x|\theta)=f(x)N(x;\mu,C)e^{-\Phi_f}$,
respectively, with $\theta(\mu,C)=(h,K) = (C^{-1}\mu,-\frac{1}{2}C^{-1})$. Then
$\Phi_f(\theta) = \log E_{p(x|\theta)}[f(x)] + \Phi(\theta)$.
Further, the first component of \eqref{eq:expfam_cumulants} reads
\begin{align*}
	\nabla_h \Phi_f(\theta) = E_{\tilde{p}(x|\theta)}[x]\quad\text{and}\quad
	\nabla^2_h \Phi_f(\theta) = \mathrm{Var}_{\tilde{p}(x|\theta)}[x].
\end{align*}
By the chain rule there hold
\begin{align*}
	\nabla_\mu  \log E_{N(x;\mu,C)} [f(x)] &= C^{-1}\nabla_h \log E_{p(x|\theta)}[f(x)],\\
	\nabla_\mu^2\log E_{N(x;\mu,C)} [f(x)] &= C^{-1} \nabla_h^2\log E_{p(x|\theta)}[f(x)] C^{-1}.
\end{align*}
Consequently, we deduce
\begin{align*}
	E_{\tilde{p}(x|\theta)}[x] &= \nabla_h \Phi_f(\theta) = C\nabla_\mu \log E_{N(x;\mu,C)} [f(x)] + \nabla_h \Phi(\theta),\\
	\mathrm{Var}_{\tilde{p}(x|\theta)}[x] &= \nabla^2_h \Phi_f(\theta) = C \nabla^2_\mu \log E_{N(x;\mu,C)} [f(x)] C+\nabla^2_h \Phi(\theta).
\end{align*}
Now the desired assertion follows directly from the relation $\nabla_h \Phi
(\theta)=\mu$ and $\nabla^2_h \Phi(\theta)=C$ using \eqref{eq:expfam_cumulants}.
\end{proof}

\begin{lem}\label{lem:posdefvariance}
Let $p : \R^n \to \R$ be a probability density function with a support of positive measure.
If the covariance $\mathrm{Var}_p[x] = \int p(x) (E[x]-x)(E[x]-x)^t \dx$ exists, then it
is positive definite.
\end{lem}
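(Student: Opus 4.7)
The plan is to establish positive semidefiniteness first (which is automatic) and then rule out the degenerate case using the positive-measure support hypothesis. Concretely, for any $v\in\R^n$ one has
\begin{equation*}
  v^t\mathrm{Var}_p[x]\,v = \int p(x)\bigl(v^t(x-E_p[x])\bigr)^2\dx \ge 0,
\end{equation*}
because the integrand is pointwise nonnegative. Hence $\mathrm{Var}_p[x]$ is at least positive semidefinite, and the remaining task is to show strict positivity for every $v\ne 0$.

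Next I would argue by contradiction: suppose there exists $v\ne 0$ with $v^t\mathrm{Var}_p[x]\,v = 0$. Since $p(x)(v^t(x-E_p[x]))^2$ is a nonnegative integrable function with zero integral, it vanishes Lebesgue a.e., so
\begin{equation*}
  p(x)\bigl(v^t(x-E_p[x])\bigr)^2 = 0 \quad \text{for a.e. } x\in\R^n.
\end{equation*}
Thus for a.e.\ $x$, either $p(x)=0$ or $x$ lies on the affine hyperplane $H_v := \{x : v^t(x-E_p[x])=0\}$.

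The final step uses that $H_v$, being an $(n-1)$-dimensional affine subspace of $\R^n$, has Lebesgue measure zero (this is the only geometric input). Therefore $p(x)=0$ for a.e.\ $x\in\R^n\setminus H_v$, and combined with $|H_v|=0$ one gets $p(x)=0$ for a.e.\ $x\in\R^n$, so $\{p>0\}$ has Lebesgue measure zero. This contradicts the hypothesis that the support of $p$ has positive measure, finishing the proof.

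The only conceptually delicate point is the interpretation of "support of positive measure": the argument needs it to mean positive Lebesgue measure of $\{p>0\}$ (equivalently, $p$ is not concentrated on any proper affine hyperplane), so that $H_v$ cannot absorb the whole mass. Everything else is a routine integration-theoretic argument, and no further tools are required.
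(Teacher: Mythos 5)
Your proof is correct and takes essentially the same route as the paper: both reduce to the observation that the set where $v^t(x-E_p[x])=0$ is a hyperplane of Lebesgue measure zero, so the positive-measure support forces $\int p(x)(v^t(x-E_p[x]))^2\dx>0$. The only difference is that you argue by contradiction while the paper argues directly, and your explicit remark about interpreting ``support of positive measure'' as $\{p>0\}$ having positive Lebesgue measure is a worthwhile clarification that the paper leaves implicit.
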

\begin{proof}
It suffices to show $w^t \mathrm{Var}_p[x] v > 0$ for any nonzero vector $w\in\R^n$.
Let $S = \{ x \in \supp(p) : w^t (E[x]-x) \neq 0 \}$. However, the complement
set $\mathrm{supp}(p)\setminus S=\{x\in \supp(p): w^t(E[x]-x)=0\}$ lies in a
co-dimensional one hyperplane in $\mathbb{R}^n$, thus it has only zero measure. This together with
the positive measure assumption of the support $\supp(p)$, the set $S$ has positive measure.
Therefore, $w^t \Var_p[x] w \ge \int_S p(x) (w^t(E[x]-x))^2 \dx > 0$.
\end{proof}

Finally we recall the concept of log-concavity.  It plays a role in the theory
of expectation propagation as convexity in classical optimization theory.
A nonnegative function $f:V \to \R^+_0$ is log-concave if
\begin{equation*}
 f(\lambda x_1 + (1-\lambda) x_2) \ge f(x_1)^\lambda f(x_2)^{1-\lambda}
\end{equation*}
holds for all elements $x_1,x_2$ from a real convex vector space $V$ and for all $\lambda \in [0,1]$.

\begin{lem}\label{lem:prod_logconcave}
Let $f,g :V \to \R^+_0$ be log-concave. Then the product $fg$ is log-concave.
\end{lem}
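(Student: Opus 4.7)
The plan is to verify the defining inequality of log-concavity for $fg$ by multiplying together the corresponding inequalities for $f$ and $g$. Fix arbitrary $x_1, x_2 \in V$ and $\lambda \in [0,1]$. By log-concavity of $f$ and $g$ respectively, I have
\begin{equation*}
  f(\lambda x_1 + (1-\lambda)x_2) \ge f(x_1)^\lambda f(x_2)^{1-\lambda}, \quad
  g(\lambda x_1 + (1-\lambda)x_2) \ge g(x_1)^\lambda g(x_2)^{1-\lambda}.
\end{equation*}

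Since $f$ and $g$ take values in $\R^+_0$, both sides of each inequality are nonnegative, so I may multiply them term by term, group the resulting exponents, and obtain
\begin{equation*}
  (fg)(\lambda x_1 + (1-\lambda)x_2) \ge \bigl(f(x_1) g(x_1)\bigr)^\lambda \bigl(f(x_2) g(x_2)\bigr)^{1-\lambda} = (fg)(x_1)^\lambda (fg)(x_2)^{1-\lambda},
\end{equation*}
which is exactly the log-concavity of $fg$.

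The only mild subtlety is the degenerate case in which some factor vanishes, e.g.\ $f(x_1)=0$; here the convention $0^\lambda = 0$ for $\lambda > 0$ (and $0^0 = 1$) makes the right-hand side of the product either zero or reduces it to the remaining nonzero terms, and the inequality still holds since the left-hand side is nonnegative. There is no real obstacle in this argument: the whole statement is a direct consequence of the definition of log-concavity together with the fact that the product of two nonnegative increasing transformations of the geometric mean preserves the bound.
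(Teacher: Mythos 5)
Your proof is correct: the paper states this lemma without proof (it is immediate from the definition), and your direct verification --- multiplying the two defining inequalities, which is valid since all quantities involved are nonnegative --- is exactly the standard argument one would supply. The treatment of the degenerate case where a factor vanishes is also handled adequately, so there is nothing to add.
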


Log-concavity is preserved by marginalization by the Pr\'ekopa-Leindler
inequality \cite[Corollary 1.8.3]{Bogachev:1998} \cite[Corollary 3.5]{BrascampLieb:1976}.
\begin{lem}
\label{lem:marg_logconcave}
Let $f: \R^n\times\R^m \to \R^+_0$ be log-concave and bounded. Then the marginalized function
$ g(x) = \int f(x,y) dy $ is log-concave.
\end{lem}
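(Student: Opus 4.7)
The plan is to apply the Pr\'ekopa--Leindler inequality directly to slices of $f$ in the $y$-variable. I would fix $x_0, x_1 \in \R^n$ and $\lambda \in [0,1]$, set $x_\lambda = \lambda x_0 + (1-\lambda) x_1$, and for $i \in \{0, 1, \lambda\}$ define the slice functions $h_i : \R^m \to \R_0^+$ by $h_i(y) = f(x_i, y)$. The target inequality $g(x_\lambda) \ge g(x_0)^\lambda g(x_1)^{1-\lambda}$ then reads
\[
\int h_\lambda(y)\,dy \ge \left(\int h_0(y)\,dy\right)^\lambda \left(\int h_1(y)\,dy\right)^{1-\lambda},
\]
which is precisely the conclusion of Pr\'ekopa--Leindler, so the task reduces to verifying its pointwise hypothesis on the slices.

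The key step is that pointwise verification, and it is a one-line consequence of the joint log-concavity of $f$. Given arbitrary $y_0, y_1 \in \R^m$ and setting $y_\lambda = \lambda y_0 + (1-\lambda) y_1$, the identity $(x_\lambda, y_\lambda) = \lambda(x_0, y_0) + (1-\lambda)(x_1, y_1)$ together with log-concavity of $f$ on $\R^n \times \R^m$ yield
\[
h_\lambda(y_\lambda) = f(x_\lambda, y_\lambda) \ge f(x_0, y_0)^\lambda f(x_1, y_1)^{1-\lambda} = h_0(y_0)^\lambda h_1(y_1)^{1-\lambda},
\]
which is exactly the hypothesis required by Pr\'ekopa--Leindler in the form of \cite[Corollary 1.8.3]{Bogachev:1998} or \cite[Corollary 3.5]{BrascampLieb:1976}. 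Applying that inequality to the triple $(h_0, h_1, h_\lambda)$ delivers the displayed integral inequality, and hence log-concavity of $g$.

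The only mild obstacle concerns integrability and measurability of the slices, which is where the boundedness assumption on $f$ is used: it guarantees that each $h_i$ is a bounded nonnegative measurable function, so each $g(x_i)$ is an unambiguous element of $[0,\infty]$. If $g(x_0)$ or $g(x_1)$ equals $+\infty$ the target inequality is trivial, while if both are finite the standard form of Pr\'ekopa--Leindler applies verbatim. Thus boundedness of $f$ serves purely to rule out definedness pathologies, and no substantive obstacle remains beyond invoking the classical inequality; no direct calculation on $g$ itself is needed.
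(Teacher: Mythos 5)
Your proposal is correct and follows exactly the route the paper takes: the paper gives no written proof of this lemma but attributes it directly to the Pr\'ekopa--Leindler inequality, which is precisely the tool you invoke after the (correct) pointwise verification on slices. Nothing further is needed.
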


\section{Proof of Theorem \ref{thm:proj}}\label{sec:app:proof:thmproj}

We begin with an elementary lemma, which shows that the normalizing constant
of the product density is invariant under linear transformation.
\begin{lem}\label{lem:proj_z}
Let $x\sim N(x;\mu,C)$ with $C\in\mathbb{R}^{n\times n}$ being symmetric positive definite and
$U\in \R^{l\times n}\ (l \le n)$ be of full row rank. Then
\begin{equation*}
  \int t(Ux) N(x;\mu,C) \dx = \int t(s) N(s;U\mu,UCU^\mathrm{t}) \ds.
\end{equation*}
\end{lem}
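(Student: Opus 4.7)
The plan is to view both sides as the expectation $E[t(Ux)]$ under $x\sim N(\mu,C)$ and reduce the identity to the standard fact that the pushforward of a Gaussian by a linear map is again Gaussian. Concretely, since $U$ has full row rank with $l\le n$, I can extend $U$ to an invertible $n\times n$ matrix $M=\begin{pmatrix} U\\ V\end{pmatrix}$ by choosing any $V\in\R^{(n-l)\times n}$ whose rows complete those of $U$ to a basis of $\R^n$. Then $(s,y)=Mx$ is a linear diffeomorphism of $\R^n$ with Jacobian determinant $|\det M|$, and the substitution transforms the left-hand integral into
\begin{equation*}
  \int t(Ux)N(x;\mu,C)\dx=\int\!\!\int t(s)\,N\bigl((s,y);M\mu,MCM^t\bigr)\ds\,\dd y,
\end{equation*}
where I have used the identity $|\det M|^{-1}N(M^{-1}(s,y);\mu,C)=N((s,y);M\mu,MCM^t)$ that follows from the change-of-variables formula and a direct rewriting of the Gaussian exponent.

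Next, I would observe that $M\mu=(U\mu,V\mu)$ and that the $(s,s)$-block of $MCM^t$ is exactly $UCU^t$, while the $(s,y)$- and $(y,y)$-blocks are $UCV^t$ and $VCV^t$. By the standard marginalization property of multivariate Gaussians, integrating $y$ out of $N((s,y);M\mu,MCM^t)$ yields exactly $N(s;U\mu,UCU^t)$. Substituting this back produces $\int t(s)N(s;U\mu,UCU^t)\ds$, which is the right-hand side of the claim.

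The main obstacle is a bit of bookkeeping rather than a real difficulty: making sure that the Jacobian from the change of variables is absorbed correctly into the normalization of the Gaussian, and that the marginalization step (integrating out $y$) produces the correct covariance $UCU^t$ without contamination from $V$. Both facts are textbook, and the complementary subspace chosen by $V$ drops out entirely, which is the structural reason the right-hand side depends on $U$ only through $U\mu$ and $UCU^t$. A slicker but equivalent formulation avoids $V$ altogether: one simply invokes the push-forward identity that $Ux\sim N(U\mu,UCU^t)$ when $x\sim N(\mu,C)$ (provable in one line via characteristic functions) and applies the law of the unconscious statistician, $E[t(Ux)]=\int t(s)N(s;U\mu,UCU^t)\ds$.
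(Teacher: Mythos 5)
Your proposal is correct and follows essentially the same route as the paper: complete $U$ to an invertible matrix, change variables, and integrate out the complementary coordinates, which the paper carries out explicitly by factoring the joint Gaussian into the marginal $N(s;U\mu,UCU^t)$ times the conditional with Schur-complement covariance. The characteristic-function shortcut you mention at the end is a valid, slightly slicker alternative, but the substance of the argument is identical.
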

\begin{proof}
Let $T\in\R^{n\times n}$ be a bijective completion of $U$,
i.e., there is some $R \in \mathbb{R}^{(n-l)\times n}$, such that $T=\left[\begin{aligned} U\\
R\end{aligned}\right]$. Further, we denote $Tx =\left(\begin{aligned}s= Ux\\ r=Rx \end{aligned}\right) := \widehat{x}
$. Then we have
\begin{equation*}
  \begin{aligned}
   \tfrac{1}{(2\pi)^{\frac{n}{2}}\sqrt{\det{C}}} e^{-\frac{1}{2} (\mu-x)^\mathrm{t}C^{-1}(\mu-x)}
   &= \tfrac{1}{(2\pi)^{\frac{n}{2}}\sqrt{\det{C}}} e^{-\frac{1}{2} (T\mu-Tx)^\mathrm{t}(TCT^\mathrm{t})^{-1}(T\mu-Tx)}
   =|\det{T}|N(Tx;T\mu,TCT^\mathrm{t}).
  \end{aligned}
\end{equation*}
Consequently,
\begin{align*}
   \int t(Ux) N(x,\mu,C) \dx &= \int t(Ux) |\det{T}| N(Tx;T\mu,TCT^\mathrm{t}) \dx
    = \int t(s) N(\widehat{x};T\mu,TCT^\mathrm{t}) \mathrm{d}\widehat{x}.
\end{align*}
Now we split the Gaussian distribution $N(\widehat{x};T\mu,TCT^\mathrm{t})$ into
\begin{equation}\label{eqn:gaussiansplit}
   N(\widehat{x};T\mu, TCT^\mathrm{t}) = N(s;U\mu,UCU^\mathrm{t}) N(r;\widehat{\mu}(s),\widehat{C}),
\end{equation}
where $\widehat{\mu}(s)=R\mu + (UCR^\mathrm{t})^\mathrm{t} (UCU^\mathrm{t})^{-1}(s-U\mu)$,
and $\widehat{C} = RCR^\mathrm{t} - (UCR^\mathrm{t})^\mathrm{t} (UCU^\mathrm{t})^{-1} (UCR^\mathrm{t})$ is the Schur complement of $UCU^\mathrm{t}$.
Therefore,
\begin{align*}
   \int  t(Ux) N(x;\mu,C) \dx &= \int t(s) N(s;U\mu,UCU^\mathrm{t}) \int N(r;\widehat{\mu}(s),\widehat{C})\mathrm{d}r\ds\\
    &= \int t(s) N(s;U\mu,UCU^\mathrm{t})\ds.
\end{align*}
\end{proof}

Now we present the proof of Theorem \ref{thm:proj}.
\begin{proof}
Let $Z=\int t(Ux) N(x;\mu,C) x \dx$. By Lemma \ref{lem:proj_z}, $Z=\int  t(s) N(s;U\mu,UCU^T) \ds$.
Now using the completion $T$ of $U$ and the splitting \eqref{eqn:gaussiansplit}, we can write
\begin{equation*}
\begin{aligned}
T\int Z^{-1}t(Ux) N(x;\mu,C) x \dx 
  =&\int  Z^{-1}t(s) N(s;U\mu,UCU^\mathrm{t}) \int N(r;\widehat{\mu}(s),\widehat{C}) \widehat{x} \mathrm{d}r \ds\\
  =&\int Z^{-1}t(s) N(s;U\mu,UCU^\mathrm{t}) \left(\begin{array}{c} s\\ \widehat{\mu}(s)\end{array}\right)\ds\\
  =& \left(\begin{array}{c}\displaystyle E_{Z^{-1}t(s) N(s;U\mu,UCU^\mathrm{t})} [s]\\
  R\mu + (UCR^\mathrm{t})^\mathrm{t}(UCU^\mathrm{t})^{-1}(E_{Z^{-1}t(s) N(s;U\mu,UCU^\mathrm{t})} [s]-U\mu)
\end{array}\right).
\end{aligned}
\end{equation*}
The unique solution $\mu^\ast := E_{Z^{-1}t(Ux) N(x;\mu,C)} [x]$ of this matrix equation is given by
\begin{align*}
  \mu^\ast = \mu+ CU^\mathrm{t}(UCU^\mathrm{t})^{-1}(\overline{s}-U\mu),
\end{align*}
where $\overline{s} = E_{Z^{-1}t(s) N(s;U\mu,UCU^\mathrm{t})} [s]$.
This shows the first identity.

We turn to the covariance $C^*= E_{Z^{-1}t(Ux) N(x;\mu,C)} [(x-\mu^*)(x-\mu^*)]$. With a change of variable and
the splitting (with $\bar{s}=U\mu^*$ and $\bar{r}=R\mu^*$)
\begin{equation*}
  (\widehat{x}-T\mu^*)(\widehat{x}-T\mu^*)^\mathrm{t} = \left(\begin{array}{cc} (s-\bar{s})(s-\bar{s})^\mathrm{t} &(s- \bar{s})(r-\bar{r})^\mathrm{t}\\
     (r-\bar{r})(s-\bar{s})^\mathrm{t} &(r-\bar{r})(r-\bar{r})^\mathrm{t} \end{array}\right),
\end{equation*}
we deduce that
\begin{align*}
TC^* T^\mathrm{t}
=& \int Z^{-1}t(s) N(s;U\mu,UCU^\mathrm{t}) \int N(r;\widehat{\mu}(s),\widehat{C}) (\widehat{x}-T\mu^*)(\widehat{x}-T\mu^*)^\mathrm{t}\mathrm{d}r\ds\\
=& \int Z^{-1}t(s) N(s;U\mu,UCU^\mathrm{t})
\left(\begin{array}{cc} (s-\bar{s})(s-\bar{s})^\mathrm{t} & (s-\bar{s})(\widehat{\mu}(s)-\bar{r})^\mathrm{t}\\
			(\widehat{\mu}(s)-\bar{r})(s-\bar{s})^\mathrm{t}	& \widehat{C} +
    (\widehat{\mu}(s)-\bar{r})(\widehat{\mu}(s)-\bar{r})^\mathrm{t}\end{array}\right) \ds.
\end{align*}
Further, it follows from \eqref{eqn:gaussiansplit} that
$
     \widehat{\mu}(s)-\bar{r}= (UCR^\mathrm{t})^\mathrm{t}(UCU^\mathrm{t})^{-1}(s-\bar{s}):=L(s-\bar{s}).
$
Consequently, the covariance $C^* $ satisfies
\begin{align*}
   \left(\begin{array}{cc} UC^*U^\mathrm{t} & UC^*R^\mathrm{t}\\ RC^*U^\mathrm{t} &RC^*R^\mathrm{t}\end{array}\right)
   = \left(\begin{array}{cc} \overline{C} & \overline{C}L^\mathrm{t}\\ L\overline{C} & L\overline{C}L^\mathrm{t} +\widehat{C}\end{array}\right)
\end{align*}
where $\overline{C} = E_{Z^{-1}t(s) N(s;U\mu,UCU^\mathrm{t})} [(s-\bar{s})(s-\bar{s})^\mathrm{t}]$.
The unique solution $C^*$ to the equation is given by
\begin{equation*}
  C^\ast = C + CU^\mathrm{t}(UCU^\mathrm{t})^{-1}(\overline{C}-UCU^\mathrm{t})(UCU^\mathrm{t})^{-1}UC,
\end{equation*}
which can be verified directly termwise. This shows the second identity.
\end{proof}

\bibliographystyle{abbrv}
\bibliography{ep}

\end{document}